\theoremstyle{definition}
\newtheorem{definition}{Definition}[section]
\newtheorem{rmk}[definition]{Remark}
\newtheorem{remark}[definition]{Remark}
\theoremstyle{plain}
\newtheorem{lemma}[definition]{Lemma}
\newtheorem{theorem}[definition]{Theorem}
\newtheorem{proposition}[definition]{Proposition}
\newtheorem{corollary}[definition]{Corollary}
\newcommand{\res}
{\mathop{\hbox{\vrule height 7pt width .5pt depth 0pt \vrule
height .5pt width 6pt depth 0pt}}\nolimits}
\newcommand{\R}{\mathbb R}
\newcommand{\G}{\mathbb G}
\newcommand{\bbG}{\mathbb G}
\newcommand{\bbR}{\mathbb R}
\newcommand{\g}{\mathfrak g}
\renewcommand{\subset}{\subseteq}
\title[Local minimizers and gamma-convergence]{Local minimizers and Gamma-convergence for nonlocal perimeters in Carnot groups}
\author[A.\ Carbotti]{Alessandro Carbotti}
\address{Dipartimento di Matematica
	e Fisica, Universit\`a del Salento,
	Via Per Arnesano, 73100 Lecce, Italy.}
\email{alessandro.carbotti@unisalento.it}
\author[S.\ Don]{Sebastiano Don}
\address{Department of Mathematics and Statistics, P.O.\ Box 35 (MaD), FI-40014, University of Jyv\"askyl\"a, Finland.}
\email{sedon@jyu.fi}
\author[D.\ Pallara]{Diego Pallara}
\address{Dipartimento di Matematica
	e Fisica, Universit\`a del Salento, and INFN, Sezione di Lecce,
	Via Per Arnesano, 73100 Lecce, Italy.}
\email{diego.pallara@unisalento.it}
\author[A.\ Pinamonti]{Andrea Pinamonti}
\address{Dipartimento di Matematica, Universit\`a di Trento,
	Via Sommarive, 14, 38123 Povo TN, Italy.}
\email{andrea.pinamonti@unitn.it}
\keywords{Carnot Groups, calibrations, nonlocal perimeters, $\Gamma$-convergence, sets of finite perimeter, rectifiability}
\thanks{ S.D.\ has been partially supported by the Academy of Finland
	(grant 288501 ``\emph{Geometry of subRiemannian groups}'' and grant
	322898
	``\emph{Sub-Riemannian Geometry via Metric-geometry and Lie-group Theory}'')
	and by the European Research Council
	(ERC Starting Grant 713998 GeoMeG ``\emph{Geometry of Metric Groups}''). D.P.\ is member of G.N.A.M.P.A.\ of the Italian Istituto Nazionale di Alta Matematica
	(INdAM) and has been partially supported by the PRIN 2015 MIUR project 2015233N54. 
A.P.\ is member of G.N.A.M.P.A.\ of the Italian Istituto Nazionale di Alta Matematica
	(INdAM).
	The authors warmly thank Gioacchino Antonelli, Xavier Cabr\'{e} and Valerio Pagliari for interesting conversations about the problem.}
\begin{document}

\begin{abstract}
	We prove the local minimality of halfspaces in Carnot groups for a class of nonlocal functionals usually addressed as nonlocal perimeters. Moreover, in a class of Carnot groups in which the De Giorgi's rectifiability Theorem holds, we provide a lower bound for the $\Gamma$-liminf of the rescaled energy in terms of the horizontal perimeter.
\end{abstract}

\maketitle
\section{Introduction}

Given an open set $\Omega\subset\mathbb{R}^n$ and $\alpha\in (0,1)$, we define the nonlocal (or fractional) $\alpha$-perimeter of a measurable set $E\subset\mathbb{R}^n$ as the functional 
\begin{equation}
\label{eq:fracperintro}
P_\alpha(E; \Omega)\coloneqq L_{\alpha}(E^c\cap\Omega,E\cap\Omega)+L_{\alpha}(E^c\cap\Omega,E\cap\Omega^c)+L_{\alpha}(E\cap\Omega,E^c\cap\Omega^c)
\end{equation}
where 
\[                                                                               
L_{\alpha}(A,B)\coloneqq\int_{A}\int_{B} \frac 1{|x-y|^{n+\alpha}}\,dx\,dy.
\]
The notion of fractional perimeter was introduced in \cite{CafRoqSav} to study nonlocal minimal surfaces of fractional type, while a generalized notion of nonlocal perimeter defined using a positive, compactly supported radial kernel was introduced in \cite{MRT}. 
Nonlocal perimeters have been object of many studies in recent years. For example 
they are related to nonlocal (not necessarily fractional) minimal surfaces, \cite{MRT, MRT2, CinSerVal}, fractal sets, \cite{Visintin90, Visintin91, Lombardini16}, phase transition \cite{SV} and many other problems. We refer the interested reader to \cite{Dipierro,Valdinoci} for further applications and for a comparison with the standard perimeter.

Nonlocal perimeter can also be characterized in terms of the Gagliardo-Slobodeckij seminorm in the framework of fractional Sobolev spaces, see \cite{DNPV}, or in terms of Dirichlet energy associated with an extension problem for the fractional Laplacian, see \cite{CafSil}.

The limiting behavior of fractional $\alpha$-perimeters as $\alpha\to 1^{-}$ and $\alpha \to 0^+$ turns out to be very interesting. Davila showed in \cite{Davila} that for a bounded Borel set $E$ of finite perimeter the following equality holds:
\begin{equation}
\lim_{\alpha\to 1^{-}}(1-\alpha)L_{\alpha}(E^c\cap\Omega,E\cap\Omega)=cP(E;\Omega),
\end{equation}
in particular, when $\Omega=\R^n$, one has
\begin{equation}\label{Davila}
\lim_{\alpha\to 1^{-}} (1-\alpha) P_\alpha(E; \mathbb{R}^n)=cP(E)
\end{equation}
where $P(E)$ denotes the classical perimeter of $E$ in $\R^n$ and $c$ is a positive constant depending only on $n$. In the subsequent paper \cite{DFPV} the authors studied the behavior of $\alpha P_\alpha(E; \Omega)$ as $\alpha\to {0^+}$, finally in \cite{ADPM} the limiting behavior of $P_{\alpha}(E;\Omega)$ is studied in the $\Gamma$-convergence sense, see also \cite{PSV} for further extensions.

%we have that \eqref{eq:fracperintro} coincides with
%$$
%P_{\alpha}(E; \Omega)=\frac{1}{2}\left([\chi_E]_{W^{\alpha,1}(\R^n)}-[\chi_E]_{W^{\alpha,1}(\Omega^c)}\right),
%$$
%where $ [u]_{W^{\alpha,1}(\R^n)}$ denotes the Gagliardo-Slobodeckij seminorm in the Sobolev space $W^{\alpha,1}(\R^n)$.

%Another characterization is given by the seminal paper by Caffarelli and Silvestre \cite{CafSil}, in which it is proved that the squared Gagliardo-Slobodeckij seminorm can be obtained by infimizing a Dirichlet integral given by an extension problem for the fractional Laplacian, on the upper halfspace $\R^n\times(0,+\infty)$ with respect to a suitable weighted product measure. \\\\
Carnot groups are connected and simply connected Lie groups whose Lie algebra $\mathfrak{g}$ is stratified, i.e., 
there are linear subspaces $\mathfrak{g}_1,...,\mathfrak{g}_s$ of $\mathfrak{g}$ such that
\begin{equation}\label{stratificazioneintro}
\mathfrak{g}=\mathfrak{g}_1\oplus...\oplus \mathfrak{g}_s,\quad [\mathfrak{g}_1,\mathfrak{g}_i]=\mathfrak{g}_{i+1},\quad
\mathfrak{g}_s\neq\{0\},\quad [\mathfrak{g}_s,\g_1]=\{0\}
\end{equation}
where $[\mathfrak{g}_1,\mathfrak{g}_i]$ denotes the subspace of ${\mathfrak{g}}$ generated by
the commutators $[X,Y]$ with $X\in \mathfrak{g}_1$ and $Y\in \mathfrak{g}_i$. \\
In the last few years Carnot groups have been 
largely studied in several respects, such as Differential Geometry \cite{CDPT}, 
subelliptic Differential Equations \cite{BonLanUgu, Folland, Folland2, Sanchez-Calle}, Complex Analysis \cite{stein} and Neuroimaging \cite{CitManSar}.
Many key results of Geometric Measure Theory in the context of metric measure spaces are based on the notion of function of bounded variation and, in particular, on sets of finite perimeter.

The local theory of perimeters in Carnot groups has then attracted a lot of interest in the literature and it is natural to address the attention to their nonlocal counterpart.
In the present paper we study nonlocal perimeters coming from a positive symmetric kernel $K\colon\G\to \mathbb{R}$ satisfying
\[
\int_\G\min\{1,d(x,0)\} K(x)\,dx<+\infty,
\]
where $d$ is the Carnot-Carath\'eodory distance on $\G$, see Definition \ref{carnotdistance}.

More precisely, given two measurable and disjoint sets $E$ and $F$ in $\G$, we consider the interaction functional
\[                                                                               
L_K(E,F)\coloneqq\int_{E}\int_{F}K(y^{-1}x)\,dx\,dy
\]
and we define the nonlocal $K$-perimeter of a measurable set $E$ inside an open set $\Omega$ as in \eqref{eq:fracperintro}, namely
\[
P_K(E;\Omega)\coloneqq L_K(E^c\cap\Omega,E\cap\Omega)+L_K(E^c\cap\Omega,E\cap\Omega^c)+L_K(E\cap\Omega,E^c\cap\Omega^c).
\]
We refer to \cite{FMPPS} and \cite{Garofalo1} for a general overview.

In the first part of the paper we provide sufficient conditions that have to be satisfied by every local minimizer of the nonlocal $K$-perimeter. Given a measurable set $E_0$ and an open set $\Omega$ in $\G$,  by a local minimizer for $P_K$ in $\Omega$ with outer datum $E_0$ we mean a measurable set $E\subseteq \G$ such that $E\setminus \Omega=E_0\setminus \Omega$ and such that
\[
P_K(E;\Omega)\leq P_K(F;\Omega), \quad \text{for every measurable $F\subseteq \G$ with $F\setminus \Omega=E_0\setminus \Omega$}.
\]
Our first main result, see Theorem \ref{th:calibrationimpliesminimizer}, reads as follows.
\begin{theorem}\label{sssa}
	Let $E_0\subseteq \G$ be a measurable set and let $\Omega\subseteq \G$ be an open set such that $P_K(E_0;\Omega)<+\infty$. Let $E\subseteq \G$ be a measurable set with $E\setminus \Omega=E_0\setminus\Omega$ and assume $\chi_E$ admits a calibration (see Definition \ref{def:calibration} below). Then $E$ is a local minimizer for $P_K$ in $\Omega$ with outer datum $E_0$.
\end{theorem}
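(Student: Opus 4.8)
The plan is to run the classical calibration argument, adapted to the nonlocal functional $P_K$ and to the weak integrability of the kernel near the origin. Fix an arbitrary competitor $F\subseteq\G$ with $F\setminus\Omega=E_0\setminus\Omega$; if $P_K(F;\Omega)=+\infty$ there is nothing to prove, so assume $P_K(F;\Omega)<+\infty$. By the symmetry of $K$ one first rewrites, for every measurable $G\subseteq\G$,
\[
2\,P_K(G;\Omega)=\iint_{\mathcal A}\bigl|\chi_G(x)-\chi_G(y)\bigr|\,K(y^{-1}x)\,dx\,dy,\qquad \mathcal A:=(\Omega\times\G)\cup(\G\times\Omega),
\]
so that only pairs at least one of whose points lies in $\Omega$ contribute. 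Set $w:=\chi_F-\chi_E$; since $E\setminus\Omega=F\setminus\Omega=E_0\setminus\Omega$, the function $w$ is supported in $\Omega$, takes values in $\{-1,0,1\}$, and satisfies $w\le 0$ a.e.\ on $E$ and $w\ge 0$ a.e.\ on $E^c$. It is also convenient to record that the existence of a calibration ensures $P_K(E;\Omega)<+\infty$ (this is built into Definition~\ref{def:calibration}, or follows from the saturation identity below), so that all double integrals involving $E$ are absolutely convergent.

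Let $g\colon\G\times\G\to\R$ be a calibration of $\chi_E$ in $\Omega$ as in Definition~\ref{def:calibration}: it is measurable and antisymmetric, $|g(x,y)|\le K(y^{-1}x)$ for a.e.\ $(x,y)$, it is saturated on $E$, i.e.\ $g(x,y)\,(\chi_E(x)-\chi_E(y))=|\chi_E(x)-\chi_E(y)|\,K(y^{-1}x)$ for a.e.\ $(x,y)$, and it obeys the divergence condition in $\Omega$, namely $\int_\G g(x,y)\,dy\le 0$ for a.e.\ $x\in E\cap\Omega$ and $\int_\G g(x,y)\,dy\ge 0$ for a.e.\ $x\in E^c\cap\Omega$. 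The bound on $|g|$ gives the pointwise inequality $g(x,y)\,(\chi_F(x)-\chi_F(y))\le|\chi_F(x)-\chi_F(y)|\,K(y^{-1}x)$, while saturation turns the corresponding inequality for $\chi_E$ into an equality; integrating both over $\mathcal A$ and subtracting,
\[
2\,P_K(F;\Omega)-2\,P_K(E;\Omega)\ \ge\ \iint_{\mathcal A}g(x,y)\,\bigl[w(x)-w(y)\bigr]\,dx\,dy .
\]
Since $w$ is supported in $\Omega$, the integrand vanishes off $\mathcal A$, so the right-hand side equals $\iint_{\G\times\G}g(x,y)\,[w(x)-w(y)]\,dx\,dy$, dominated in modulus by $K(y^{-1}x)\bigl(|\chi_E(x)-\chi_E(y)|+|\chi_F(x)-\chi_F(y)|\bigr)\in L^1(\G\times\G)$. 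Renaming $x\leftrightarrow y$ in the term carrying $w(y)$ and using the antisymmetry of $g$, it becomes $2\int_\Omega w(x)\bigl(\int_\G g(x,y)\,dy\bigr)\,dx$, whose integrand is $\ge 0$ a.e.\ on $\Omega$ by the sign of $w$ and the divergence condition. Hence $P_K(E;\Omega)\le P_K(F;\Omega)$, and since $F$ was arbitrary, $E$ is a local minimizer for $P_K$ in $\Omega$ with outer datum $E_0$.

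The genuinely delicate point is the passage to $2\int_\Omega w(x)\bigl(\int_\G g(x,y)\,dy\bigr)\,dx$: since $K$ is only required to satisfy $\int_\G\min\{1,d(x,0)\}\,K(x)\,dx<+\infty$, it need not be integrable near the origin, so $\int_\G g(x,y)\,dy$ and the one-sided Fubini splittings $\iint g(x,y)w(x)$, $\iint g(x,y)w(y)$ are only conditionally convergent. I would make this rigorous by truncating, i.e.\ working with $K_\varepsilon:=K\,\chi_{\{d(\cdot,0)>\varepsilon\}}$ (which is integrable) and the integrable antisymmetric kernel $g_\varepsilon:=g\,\chi_{\{d(y^{-1}x,0)>\varepsilon\}}$, for which Fubini and the symmetrization are unrestricted, and then letting $\varepsilon\to 0^+$ by dominated convergence with the $L^1(\G\times\G)$ majorant above. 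A routine but necessary preliminary is the opening identity for $2P_K(G;\Omega)$, which comes from the symmetry of $K$ and a case analysis of the three interaction terms defining $P_K$.
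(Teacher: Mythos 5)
Your argument is structurally sound and, once unpacked, proceeds by the same mechanism as the paper's: a pointwise calibration inequality that is saturated on the calibrated set, followed by antisymmetrization using $K(\xi^{-1})=K(\xi)$ to reduce the residual to a ``divergence''-type term, and a truncation/principal-value argument to handle the non-integrability of $K$ at the origin. The paper's bookkeeping is different but equivalent (it bounds $J_K(v;\Omega)\geq a(v)-b_1(v)+b_0$ and then proves $a(v)=b_1(v)$ for every competitor, so that both $J_K(v;\Omega)$ and $J_K(u;\Omega)$ are compared against $b_0$), whereas you compare $P_K(F;\Omega)$ and $P_K(E;\Omega)$ directly; the engine is the same.

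The one point you should correct is your restatement of Definition~\ref{def:calibration}. The paper does \emph{not} impose a sign condition on the divergence $\int_\G g(x,y)\,dy$; its condition (i) is that $F_\varepsilon(p)=\int_{\G\setminus B(p,\varepsilon)}K(y^{-1}p)(\zeta(y,p)-\zeta(p,y))\,dy$ tends to $0$ in $L^1(\G)$, i.e.\ (for antisymmetric $\zeta$, as one may assume by Remark~\ref{rem:antisymmetric}) the truncated integrals $\int_{\G\setminus B(p,\varepsilon)}K(y^{-1}p)\zeta(p,y)\,dy$ converge to zero in $L^1_p$. This is a \emph{vanishing} principal value, which is strictly stronger than the sub/supersolution-style sign condition you wrote; the latter is instead the flavour of hypothesis appearing in Definition~\ref{def:foliation} and Theorem~\ref{th:cabre}. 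Because vanishing trivially implies the one-sided inequalities, your proof goes through unchanged under the paper's actual hypothesis — your final display then equals $0$ exactly, and the sign structure of $w=\chi_F-\chi_E$ is not needed — but as written the proof is verifying a weaker statement than the one cited, and the claim ``this is built into Definition~\ref{def:calibration}'' is inaccurate. There is also a harmless sign-convention swap: the paper's saturation reads $\zeta(p,q)(u(q)-u(p))=|u(q)-u(p)|$, so your $g$ corresponds to $-\zeta K$ rather than $\zeta K$; this flips the sign of the residual term but, together with the correspondingly flipped sign condition, leaves the conclusion intact. Finally, the parenthetical assertion that saturation alone yields $P_K(E;\Omega)<+\infty$ is not justified as stated; the paper handles this simply by restricting attention to competitors of finite energy, which suffices for the minimality statement.
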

Theorem \ref{sssa} actually holds in a slightly more general form. Indeed, it can be proved even for a natural extension of the nonlocal $K$-perimeter to all measurable functions (see  \eqref{eq:J1J2} below). Both the proof of this Theorem and the definition of calibration are inspired by the ones given in \cite{pagliari}. We also notice that, using the generalized coarea formula \eqref{eq:coareaformula}, for any local minimizer provided by Theorem \ref{sssa}, among all the minimizers, one can always find the characteristic function of a set.\\
As a consequence of Theorem \ref{sssa} we prove that a suitably defined halfspace $H$ is the unique local minimizer of $P_K$ in the unit ball $B(0,1)$ with outer datum $H\setminus B(0,1)$. 

In \cite{Cabre} it is proved that, in the Euclidean setting, every measurable set $E$ that is foliated by sub- and super- solutions adapted to $\Omega$ (see Definition \ref{def:foliation}), admits a calibration and, if some natural geometric assumption hold, the minimizer is also unique. Our Theorem \ref{th:cabre} goes exactly in this direction and follows closely \cite[Theorem 2.4]{Cabre}.

Setting $K_{\varepsilon}\coloneqq\varepsilon^{-Q}K\circ\delta_{1/\varepsilon}$,
the second part of the paper investigates the asymptotic behavior of the rescaled functionals $\frac 1{\varepsilon}P_{\varepsilon}\coloneqq \frac 1{\varepsilon}P_{K_\varepsilon}$ as $\varepsilon \to 0$ in the $\Gamma$-convergence sense. 

Berendsen and Pagliari showed in \cite{BerendsenPagliari}, that, in the Euclidean case, such $\Gamma$-limit exists in $L^1_{loc}$ and equals the Euclidean perimeter, up to a multiplicative constant.
We also mention that in \cite{ADPM} the authors proved that, in the Euclidean setting, the functional $(1-\alpha)P_\alpha$, $\Gamma$-converges in $L^1_{loc}$ to the standard perimeter $P$, up to a multiplicative dimensional constant. 
For an introduction to $\Gamma$-convergence we refer the reader to the monographs \cite{DeGiorgiDalMaso, Braides}, see also \cite{MPSC1,MPSC2} where some classical results in $\Gamma$-convergence have been extended to the case of functionals depending on vector fields. \\
The main result of the second part of the paper reads as follows (see Section \ref{preliminary} for all the missing definitions).
\begin{theorem}\label{th:gammaconvintro}
	Let $\G$ be a Carnot group satisfying property $\mathcal R$, let $\Omega$ be open and bounded and assume $K\colon \G\to [0,+\infty)$ is symmetric and radially decreasing (i.e., $K(x)=\tilde{K}(r)$, where $r=\|x\|$ and $\tilde{K}$ is 
	decreasing) and such that 
	\[
	\inf_{r>1}r^{Q+1}\widetilde K(r)>0.
	\]
	Then, there exists a positive density $\rho\colon \g_1\to (0,+\infty)$ such that, for every family $(E_\varepsilon)$ of measurable sets converging in $L^1(\Omega)$ to $E\subseteq \Omega$, one has
	\begin{equation}
	\int_\Omega \rho(\nu_E) \,dP_\G(E;\cdot)\leq \liminf_{\varepsilon\to0} \frac{1}{\varepsilon}P_\varepsilon(E_\varepsilon;\Omega).
	\end{equation}
\end{theorem}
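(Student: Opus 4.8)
The plan is to follow the by-now standard blueprint for $\Gamma$-$\liminf$ lower bounds for nonlocal perimeters, adapted to the Carnot setting via property $\mathcal R$ (De Giorgi rectifiability). First I would reduce to the case $\liminf_{\varepsilon\to 0}\tfrac1\varepsilon P_\varepsilon(E_\varepsilon;\Omega)<+\infty$ and, passing to a subsequence, assume the liminf is a limit. Since $E_\varepsilon\to E$ in $L^1(\Omega)$ we get (up to subsequence) pointwise a.e.\ convergence of $\chi_{E_\varepsilon}$ to $\chi_E$; this is what will let us compare the rescaled energies to the limit set. The key localization step is to write $\tfrac1\varepsilon P_\varepsilon(E_\varepsilon;\Omega)$ as an integral of a nonnegative measure over $\Omega$ and to show that this sequence of measures is (locally) weakly-$*$ precompact, so that a subsequence converges weakly-$*$ to some measure $\mu$ on $\Omega$; it then suffices to prove the pointwise density bound $\tfrac{d\mu}{dP_\G(E;\cdot)}(x)\ge \rho(\nu_E(x))$ for $P_\G(E;\cdot)$-a.e.\ $x$, and integrate. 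This is exactly where property $\mathcal R$ enters: $P_\G(E;\cdot)$-a.e.\ point $x$ is a point of the reduced boundary where $E$ has, in the blow-up, a vertical halfspace $H_{\nu_E(x)}$ with inner normal $\nu_E(x)\in\g_1$, and the rescaled energies are (by the group structure and the scaling $K_\varepsilon=\varepsilon^{-Q}K\circ\delta_{1/\varepsilon}$) left-invariant and homogeneous enough to pass to the blow-up.

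The heart of the argument is the blow-up/lower-semicontinuity computation at a single reduced-boundary point. Fix such $x$, set $\nu=\nu_E(x)$, and consider the rescaled sets $E_{x,r}\coloneqq \delta_{1/r}(x^{-1}E)$, which converge in $L^1_{loc}$ to the halfspace $H_\nu$ as $r\to 0$ by definition of property $\mathcal R$. Using a diagonal argument combining the blow-up $r\to 0$ with the $\Gamma$-convergence scale $\varepsilon\to 0$ (choosing $\varepsilon=\varepsilon(r)$ suitably), and the fact that $\tfrac1\varepsilon P_\varepsilon(\cdot;B(0,R))$ is itself a rescaling of $P_K$ on a large ball, I would show that the density of $\mu$ at $x$ is bounded below by the ``nonlocal perimeter density'' of the halfspace $H_\nu$ in the group, i.e.\ by a quantity of the form
\begin{equation*}
\rho(\nu)\coloneqq \int_{H_\nu}\int_{H_\nu^c\cap\{ \langle \pi_1(y^{-1}z),\nu\rangle \in(\cdot)\}}K(z^{-1}y)\,\ldots
\end{equation*}
— more precisely $\rho(\nu)$ should be obtained as the limit $\lim_{\varepsilon\to 0}\tfrac1\varepsilon P_{K_\varepsilon}(H_\nu;\text{slab})$ divided by the horizontal perimeter of $H_\nu$ in that slab, and one checks this limit exists and is finite and positive precisely because of the growth assumption $\inf_{r>1}r^{Q+1}\widetilde K(r)>0$ (lower bound, giving positivity) together with the integrability $\int_\G \min\{1,d(x,0)\}K<\infty$ (upper bound, giving finiteness). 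The positivity and finiteness of $\rho$ on all of $\g_1\setminus\{0\}$, and its continuity (or at least Borel measurability) as a function of $\nu$, would be established separately by a change of variables reducing the slab-energy of $H_\nu$ to a one-dimensional integral against the pushforward of $K$ under the horizontal projection $\pi_1\colon\G\to\g_1$.

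The main obstacle, I expect, is the localization + blow-up compatibility: unlike the Euclidean case, the dilations $\delta_r$ are anisotropic and do not commute with the group law, so one must be careful that the three interaction terms in $P_K$ rescale correctly and that the ``cross'' terms $L_K(E^c\cap\Omega, E\cap\Omega^c)$ and $L_K(E\cap\Omega, E^c\cap\Omega^c)$, which see the outer datum, do not cause trouble when localizing to a small ball around a reduced-boundary point. The standard remedy — which I would use — is to only keep the interior interaction term $L_{K_\varepsilon}(E_\varepsilon^c\cap\Omega, E_\varepsilon\cap\Omega)$, since $P_{K_\varepsilon}\ge L_{K_\varepsilon}(E_\varepsilon^c\cap\Omega,E_\varepsilon\cap\Omega)$ and this is already enough for the lower bound; this interior term is genuinely local in the sense that restricting to a ball $B(x,s)\subset\Omega$ only decreases it, so the measure-theoretic localization goes through. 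A secondary technical point is justifying the exchange of the two limits; this is handled by a Fubini/monotonicity argument showing that for each fixed large ball $B(0,R)$ the function $(\varepsilon,r)\mapsto \tfrac1\varepsilon P_{K_\varepsilon}(E_{x,r};B(0,R))$ is lower semicontinuous along the relevant sequences, using the pointwise a.e.\ convergence from both the $L^1$ blow-up convergence and the $L^1(\Omega)$ convergence of $E_\varepsilon$, together with Fatou's lemma.
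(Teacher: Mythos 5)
Your overall strategy matches the paper's quite closely: localize the rescaled energy into a family of measures $\mu_\varepsilon$ on $\Omega$, extract a weak-$*$ limit $\mu$, differentiate $\mu$ against the perimeter measure $P_\G(E;\cdot)$, and reduce to a pointwise lower bound on the Radon--Nikodym derivative at reduced-boundary points by a diagonalized blow-up (choosing $\varepsilon_j/r_j\to 0$) combined with property $\mathcal R$. Keeping only the interior interaction $\tfrac12 J^1_\varepsilon$ after localizing to a ball inside $\Omega$ is also exactly what the paper does, and you correctly identify that the growth assumption $\inf_{r>1}r^{Q+1}\widetilde K(r)>0$ is what makes the resulting density strictly positive.

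There is, however, a genuine gap in how you propose to define $\rho$. You suggest computing $\rho(\nu)$ by evaluating the rescaled energy \emph{at the halfspace itself}, i.e.\ as $\lim_{\varepsilon\to 0}\tfrac1\varepsilon P_{K_\varepsilon}(H_\nu;\text{slab})$ normalized by the perimeter of $H_\nu$. This does not yield a valid lower bound in the density estimate, because after the diagonalization you are left with a sequence of \emph{approximations} $\widetilde E_j \coloneqq \delta_{1/r_j}(p^{-1}E_{\varepsilon_j})$ that converge to $H_{\nu_E(p)}$ in $L^1(B(0,1))$ but are not equal to it, and a priori the rescaled functionals $\tfrac1{\varepsilon}J^1_\varepsilon(\cdot;B)$ need not be lower semicontinuous along that sequence. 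To close the argument you would need exactly this lower semicontinuity at halfspaces, which is nontrivial. The paper circumvents the problem by \emph{defining}
\[
b(H)\coloneqq\inf\Bigl\{\liminf_{\varepsilon\to 0}\tfrac 1{2\varepsilon}J^1_\varepsilon(E_\varepsilon;B(0,1)):\ E_\varepsilon\to H\ \text{in}\ L^1(B(0,1))\Bigr\}
\]
as a $\Gamma$-liminf at the halfspace, so the bound $\liminf_j\tfrac1{2\tilde\varepsilon_j}J^1_{\tilde\varepsilon_j}(\widetilde E_j;B)\geq b(H_{\nu_E(p)})$ is tautological, and then separately proves (Proposition \ref{prop:biacca}) that this infimum is positive and lower semicontinuous in $\nu$. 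With your explicit definition, either you would have to prove that it equals $b(H_\nu)$ (which is the missing lower semicontinuity statement), or you would have to switch to the paper's $\Gamma$-liminf definition. Two secondary omissions: you need a compactness result (Theorem \ref{th:compactness} in the paper) to know that the $L^1$-limit $E$ actually has finite perimeter in $\Omega$ before you can differentiate against $P_\G(E;\cdot)$, and to legitimize the differentiation of $\mu$ against the perimeter measure you need the perimeter measure to be asymptotically doubling, for which the paper cites \cite{Ambrosio02}; neither appears in your sketch.
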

Here $P_\G(E;\cdot)$ denotes the perimeter measure of $E$ in $\G$, $\nu_E$ denotes its horizontal normal (see Definitions \ref{def:BVfunction} and \ref{def:reducedboundary}) and $Q$ is the homogeneous dimension of $\G$. \\
Some comments are in order. The proof of Theorem \ref{th:gammaconvintro} (see proof Theorem \ref{th:gammaliminf}) follows the ideas of \cite[Section 3.3]{BerendsenPagliari}, where the authors prove the $\Gamma$-convergence of the rescaled functionals to the perimeter in the Euclidean setting. Theorem \ref{th:gammaconvintro} gives us an estimate on the $\Gamma$-liminf of the functional $\frac 1{\varepsilon}P_\varepsilon$ in terms of a density $\rho$, which is explicitly computed and does not depend on the points in the boundary of $E$, but only on the horizontal directions of its normal. For the proof of this Theorem it is essential to apply a compactness argument to families of sets with uniformly bounded $K_\varepsilon$-perimeters. The compactness criterion is given in Theorem \ref{th:compactness} and we believe it has its own independent interest.
We also notice that, in the assumptions of Theorem \ref{th:gammaconvintro}, one has to restrict both the class of Carnot groups and the class of kernels. The fact that $K$ is required to be radial and with some specific rate at infinity allows us to say that $\rho$ is indeed a strictly positive density (see Proposition \ref{prop:biacca}), while the assumption on the group $\G$ to satisfy property $\mathcal R$ allows us to consider blow-ups of sets of finite perimeter. A Carnot group $\G$ satisfies property $\mathcal R$ if every set of finite perimeter in $\G$ has rectifiable reduced boundary, i.e.\ it can be covered, up to a set of measure zero, by a countable union of intrinsically $C^1$ hypersurfaces, see Definitions \ref{def:C1G}, \ref{def:rectifiable} and \ref{def:proprietaR}. As an immediate consequence (see Remark \ref{def:proprietaR}), the validity of property $\mathcal R$ ensures that at $P_\G(E)$-almost every point of $p$ in $\G$, the family $\delta_{1/r}(p^{-1}E)$ converges in $L^1_{loc}$, up to subsequences, to a vertical halfspace with normal $\nu_E(p)$.

As we have already pointed out, the problem of understanding what is the regularity of the (reduced) boundary of a set of finite perimeter in the context of Carnot groups has only received partial solutions, so far.  Whenever property $\mathcal R$ is not assumed, only partial results about blow-up of sets of finite perimeter are available in the literature. It is proved in \cite{FSSC03} that, for any set $E\subseteq \G$ with locally finite perimeter and for $P_\G(E)$-almost every $p\in \G$, the family $\delta_{1/r}(p^{-1}E)$ converges in $L^1_{\mathrm{loc}}(\G)$ to a set of \emph{constant horizontal normal} $F$, namely a set for which there exists $\nu\in \g_1$ such that
\begin{align}\label{eq:const}
\nu\chi_F\geq 0\quad \mbox{and}\quad X\chi_F=0\quad \text{for every $X\in \g_1$ with  $X \bot \nu$},
\end{align}
in the sense of distributions.

If in addition $\G$ has step 2, or it is of type $\star$, then it is proved respectively in \cite{FSSC03} and \cite{Marchi} that, up to a left translation, every set of constant horizontal normal is really a vertical halfspace. On the other hand, still in \cite[Example 3.2]{FSSC03}, it is proved that, for general Carnot groups, condition \eqref{eq:const} does not characterize vertical halfspaces. The classification of sets  with constant horizontal normal is a challenging problem and, as far as we know, the most general result in this direction is \cite[Theorem 1.2]{ALDK}. %:  if $E\subset \G$ has locally finite perimeter, then, for $P_\G(E)$-a.e.\ $ p\in \G$, there exist an infinitesimal sequence of radii $(r_j)$ and a vertical halfspace $H$ such that $\delta_{1/{r_j}}(p^{-1}E)$ converges in $L^1_{\mathrm{loc}}(\G)$ to $H$, as $j\to \infty$. 
We mention that in the recent paper \cite{DLDMV} the authors show that the reduced boundary of any set of locally finite perimeter in any Carnot group has a so-called \emph{cone property} that in the case of filiform groups implies rectifiability in the intrinsic Lipschitz sense. 

Finally a natural question one might ask is whether the $\Gamma$-liminf estimate given by Theorem \ref{th:gammaconvintro} can be complemented by a $\Gamma$-limsup estimate. The proofs of the $\Gamma$-limsup inequality in \cite{BerendsenPagliari} and in \cite{ADPM} rely heavily upon the convergence result by 
D\'{a}vila \cite{Davila}, whose extension to Carnot groups is, as far as we know, still an open problem, 
see \cite{MaalaPin} for some preliminary results in this directions. \\

%{\bf Acknowledgements.} D.P.\ is member of G.N.A.M.P.A.\ of the Italian Istituto Nazionale di Alta Matematica
%(INdAM) and has been partially supported by the PRIN 2015 MIUR project 2015233N54.

\section{Preliminaries}\label{preliminary}

\subsection{Carnot groups}

A connected and simply connected Lie group $(\G,\cdot)$ is said to be a {\it Carnot group  of	step $s$} if its 
Lie algebra ${\mathfrak{g}}$  admits a {\it step $s$ stratification} according to \eqref{stratificazioneintro}. 
For a general introduction to Carnot groups from the point of view of the
present paper and for further examples, we refer, e.g., to \cite{BonLanUgu, Folland, ledonne, stein}.

We write $0$ for the neutral element of the group, and $xy\coloneqq x\cdot y$, for any $x,y\in \G$.\\
We fix a scalar product $\langle\cdot,\cdot\rangle$ on $\g_1$ and denote by $|\cdot|$ its induced norm. We recall that a curve $\gamma \colon [a,b]\to \bbG$ is absolutely continuous if it is absolutely continuous as a curve into $\bbR^{n}$ via composition with local charts.

\begin{definition}\label{horizontalcurve}
	An absolutely continuous curve $\gamma\colon [a,b]\to \bbG$ is said to be \emph{horizontal} if 
	\[
	\gamma'(t)\in \g_1,
	\]
	for almost every $t\in [a,b]$. The \emph{length} of such a curve is given by
	\[
	L_{\bbG}(\gamma)=\int_{a}^{b}|\gamma'(t)|dt.
	\]
\end{definition}
Chow's Theorem \cite[Theorem 19.1.3]{BonLanUgu} asserts that any two points in a Carnot group can be connected by a horizontal curve. Hence, the following definition is well-posed.
\begin{definition}\label{carnotdistance}
	For every $x,y\in \bbG$, their \emph{Carnot-Carath\'{e}odory (CC) distance} is defined by
	\[
	d(x,y)=\inf \left\{L_{\bbG}(\gamma)\colon \gamma \text{ is a horizontal curve joining } x \text{ and }y\right\}.
	\]
	We also use the notation $\|x\|=d(x,0)$ for $x\in \bbG$.
\end{definition}
We denote by 
\[
B(x,r)=\{y\in \bbG: \| y^{-1}x\| < r\}
\]
the open ball centered at $x\in \bbG$ with radius $r>0$. \\
It is well-known (see e.g.\ \cite{mitchell}) that the Hausdorff dimension of the metric space $(\G,d)$ is the so-called {\em homogeneous dimension} $Q$ of $\G$, which is given by
\[
Q\coloneqq\sum_{i=1}^s i \dim(\mathfrak{g}_i).
\]
We denote by $\mathscr H^{Q}$ the Hausdorff measure of dimension $Q$ associated with the metric $d$. The measure $\mathscr H^Q$ is a Haar measure on $\G$ (see \cite[Proposition 1.3.21]{BonLanUgu}) and we write %and the spherical Hausdorff measure $\mathscr S ^Q$ \alessandro{serve introdurre la misura di Hausdorff sferica?} are all Haar measure on $\G$. We denote by $\mu$ one of them, and, for any $f\in L^1(\Omega;\mu)$, we write for shortness
\[
\int_\Omega f(x)\;dx\coloneqq\int_\Omega f(x)\;d\mathscr H^Q(x),
\]
for every measurable set $\Omega$ and every measurable function $f\colon\Omega\to\R$.

We recall here the notion of exponential map. Let $X\in \g$ and let $\gamma\colon[0,\infty)\to\G$ be the unique global solution of the Cauchy problem
\[
\begin{cases}
\gamma'(t)=X(\gamma(t)) \\
\gamma(0)=0.
\end{cases}
\]
 The exponential map
\begin{equation*}
\begin{aligned}
\exp\colon\g&\rightarrow\G \\
X&\mapsto \exp(X)\coloneqq\gamma(1)
\end{aligned}
\end{equation*}
is a diffeomorphism between the Lie algebra $\g$ and the Lie Group $\G$, and we use the notation 
$\log\colon\G\rightarrow\g$ to denote its inverse.
For any $\lambda >0$, we denote by $\delta^*_\lambda\colon\mathfrak{g}\to \mathfrak{g}$ the unique linear map such that
\[
\delta^*_\lambda{X}=\lambda^i X,\qquad \forall X\in \mathfrak{g}_i.
\]
The maps $\delta^*_{\lambda}\colon\mathfrak{g}\to\mathfrak{g}$ are Lie algebra automorphisms, i.e., $\delta^*_{\lambda}([X, Y ]) = [\delta^*_{\lambda}X, \delta^*_{\lambda}Y ]$ for all $X,Y\in \mathfrak{g}$. For every $\lambda>0$, the map $\delta^*_\lambda$ naturally induces an automorphism on the group $\delta_{\lambda}\colon\bbG\to \bbG$ by the identity $\delta_{\lambda}(x)=(\exp \circ \delta^*_{\lambda}\circ \log) (x)$. It is easy to verify that both the families $(\delta^*_{\lambda})_{\lambda>0}$ and $(\delta_\lambda)_{\lambda>0}$ are a one-parameter group of automorphisms (of Lie algebra and of groups, respectively), i.e., $\delta^*_{\lambda}\circ \delta^*_{\eta}= \delta^*_{\lambda\eta}$ and  $\delta_{\lambda}\circ \delta_{\eta}= \delta_{\lambda\eta}$ for all $\lambda, \eta>0$. The maps $\delta^*_\lambda, \delta_\lambda$ are both called \emph{dilation of factor $\lambda$}. 

%The Haar measure of $\bbG=(\bbR^n,\cdot)$ is the Lebesgue measure
%in $\bbR^n$. If $A\subset \bbG$ is Lebesgue measurable, we
%write $|A|$ to denote its Lebesgue measure.
%Once an orthonormal basis  $X_1,\dots,X_{m}$ of the horizontal layer is fixed, we
%define,
%for any function $f:\bbG\to \bbR$ for which the partial derivatives
%$X_jf$ exist, the horizontal gradient of $f$, denoted by
%$\nabla_{\bbG}f$, as the horizontal section
%\begin{equation*}
%\nabla_{\bbG}f:=\sum_{i=1}^{m}(X_if)X_i,
%\end{equation*}
%whose coordinates are $(X_1f,...,X_{m}f)$.% Moreover, if
%Given $X_1,\dots,X_m$, we denote by $\mathcal{L}$ the
%associated positive sub-Laplacian, namely
%\[
%\mc L:= -\sum_{j=1}^m X_j^2.
%\]
%Let $\|\cdot\|:\bbG\to [0,\infty)$ denote a symmetric homogeneous norm on $\G$ \cite{BLUbook}.
%Since any two continuous homogeneous norm are
%equivalent \cite{BLUbook}, from now on we denote by $\|\cdot\|$
%any one of them; all the estimates that we give are
%then the same up to changes in the constants.
Denoting by $\tau_x\colon\bbG\to\bbG$ the {\em (left) translation} by the element $x\in \G$ defined as 
\[ 
\tau_x z\coloneqq x\cdot z=xz,
\] 
we remark that the CC distance is homogeneous with respect to dilations and left invariant. More precisely, for every $\lambda>0$ and for every $x,y, z\in \G$ one has
\[
d(\delta_\lambda x,\delta_\lambda y)=\lambda d(x,y),\qquad d(\tau_x y,\tau_x z)=d(y,z).
\]
This immediately implies that $\tau_x(B(y,r))=B(\tau_x y, r)$ and $\delta_\lambda B(y,r)=B(\delta_\lambda y, \lambda r)$.
\subsection{Perimeter and rectifiability} %One of the main problems of Sub-Riemannian geometry concerns the regularity of the (reduced) boundary of a set of finite perimeter. The solution of this problem in the Euclidean spaces goes back to De Giorgi \cite{Deg55}. He proved that the reduced boundary of a set of finite perimeter is $(n-1)$-recifiable, i.e.,\ it can be covered, up to a set of $\mathscr H^{n-1}$-measure zero, by a countable family of $C^1$-hypersurfaces. The validity of such a result has wide consequences in the development of Geometric Measure Theory and Calculus of Variations (see e.g.\ the monographs \cite{AFP, EG}).

%The validity of a rectifiability-type theorem in the context of Carnot groups is still not yet known in full generality. However, there are complete results in all Carnot groups of step 2 (see \cite{FSSC01, FSSC03}) and in the so-called Carnot groups of type $\star$, which are defined, according to the definition given in \cite{Marchi}, as those Carnot Groups for which there exists a basis $(X_1,\ldots,X_m)$ of the first layer $\g_1$ such that
%$$
%[X_j,[X_j,X_i]]=0,\,\forall i,j=1,\ldots,m.
%$$
%In these papers the authors show that the reduced boundary of a set of finite perimeter in a Carnot group of the chosen class is rectifiable with respect to the intrinsic structure of the group.

We introduce the notions of perimeter, reduced boundary and rectifiability.
\begin{definition}
	\label{def:BVfunction}
	Let $\Omega$ be an open set in $\G$ and let $f\in L^1_{\mathrm{loc}}(\Omega)$. We say that $f$ has locally bounded variation in $\Omega$ ($f\in BV_{\G, \mathrm{loc}}(\Omega)$), if, for every $Y\in \mathfrak g_1$ and every open set $A\Subset \Omega$, there exists a Radon measure $Yf$ on $\Omega$ such that
	\[
	\int_{A} fY\varphi \,d\mu=-\int_A \varphi \,d(Yf),
	\]
	for every $\varphi\in C_c^1(A)$. We say that $f\in L^1(\Omega)$ has \emph{bounded variation} in $\Omega$ ($f\in BV_{\G}(\Omega)$) if $f$ has locally bounded variation in $\Omega$ and, for every basis $(X_1,\dots, X_m)$ of $\mathfrak g_1$, the total variation $|D_Xf|(\Omega)$ of the measure $D_Xf\coloneqq(X_1 f,\dots,X_m f)$ is finite. If $E$ is a measurable set in $\Omega$, we say that $E$ has locally finite (resp. finite) perimeter in $\Omega$ if $\chi_E\in BV_{\G,\mathrm{loc}}(\Omega)$ (resp. $\chi_E\in BV_{\G}(\Omega)$). In such a case, the measure $|D_X\chi_E|$ is called \emph{perimeter of} $E$ and it is denoted by $P_\G(E;\cdot)$. We also use the notation $P_\G(E;\G)\eqqcolon P_\G(E)$.
\end{definition}
The following Proposition is proved in \cite[Theorem 2.2.2]{FSSCMeyers} and \cite[Theorem 1.14]{GaroNhiCPAM}.
\begin{proposition}\label{approx}
Let $\Omega\subseteq \G$ be an open set and let $u\in BV_{\bbG}(\Omega)$. Then, there exists a sequence $(u_k)$ in  $C^{\infty}(\Omega)$ such that
\begin{itemize}
\item $u_k\to u$ in $L^1(\Omega)$;
\item $|D_Xu_k|(\Omega)\to |D_X u|(\Omega)$.
\end{itemize}
\end{proposition}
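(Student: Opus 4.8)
The following is the sub-Riemannian analogue of the classical Meyers--Serrin / Anzellotti--Giaquinta density theorem, so the plan is to run the classical diagonal mollification argument with the Euclidean convolution replaced by a left group convolution adapted to the horizontal vector fields; fix once and for all a basis $(X_1,\dots,X_m)$ of $\g_1$. First I would reduce the problem to a one-sided estimate: since $v\mapsto|D_Xv|(\Omega)$ is lower semicontinuous with respect to $L^1_{\mathrm{loc}}(\Omega)$-convergence (test the defining identity for $D_Xv$ against $\varphi\in C_c^1(\Omega;\R^m)$ with $|\varphi|\le1$ and pass to the limit), it suffices to construct, for every $\varepsilon>0$, a function $u_\varepsilon\in C^\infty(\Omega)$ with $\|u_\varepsilon-u\|_{L^1(\Omega)}<\varepsilon$ and $|D_Xu_\varepsilon|(\Omega)\le|D_Xu|(\Omega)+C\varepsilon$; combined with the lower semicontinuity this gives $u_k:=u_{1/k}\to u$ in $L^1(\Omega)$ with $|D_Xu_k|(\Omega)\to|D_Xu|(\Omega)$.

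The key tool I would set up is a group mollifier. Fix $\phi\in C_c^\infty(\G)$ with $\phi\ge0$, $\int_\G\phi=1$, $\mathrm{supp}\,\phi\subset B(0,1)$, put $\phi_\delta:=\delta^{-Q}\phi\circ\delta_{1/\delta}$ (so $\mathrm{supp}\,\phi_\delta\subset B(0,\delta)$, $\int_\G\phi_\delta=1$), and define $(\phi_\delta*f)(x):=\int_\G\phi_\delta(y)f(y^{-1}x)\,dy=\int_\G\phi_\delta(xz^{-1})f(z)\,dz$, extending this to a (vector) Radon measure $\mu$ by $(\phi_\delta*\mu)(x):=\int_\G\phi_\delta(xz^{-1})\,d\mu(z)$. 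The properties I need — all resting on left invariance of $d$ and on the $\delta_\lambda$-homogeneity of the Carnot structure (which makes the $\phi_\delta$ a legitimate family of mollifiers) and on unimodularity of $\G$ — are: (i) $\phi_\delta*f\in C^\infty$, and for $f\in BV_{\G,\mathrm{loc}}$ one has $X_i(\phi_\delta*f)=\phi_\delta*(D_{X_i}f)$ (proved by testing against $\psi\in C_c^\infty$, moving $X_i$ onto $\psi$ by left invariance of $X_i$, invoking the definition of $D_{X_i}f$, and undoing the changes of variable); (ii) $\phi_\delta*f\to f$ in $L^1_{\mathrm{loc}}$ as $\delta\to0^+$ (continuity of left translations in $L^1$); (iii) the pointwise bound $|(\phi_\delta*\mu)(x)|\le(\phi_\delta*|\mu|)(x)$, the mass identity $\int_\G(\phi_\delta*|\mu|)\,dx=|\mu|(\G)$, and the fact that $\phi_\delta*\mu$ is supported in the $\delta$-neighbourhood of $\mathrm{supp}\,\mu$.

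With this toolkit I would carry out the diagonal construction. Exhaust $\Omega$ by open sets $\Omega_j\Subset\Omega_{j+1}$ with $\bigcup_{j\ge1}\Omega_j=\Omega$, set $\Omega_0=\emptyset$ and $A_j:=\Omega_{j+1}\setminus\overline{\Omega_{j-1}}$ (a locally finite open cover of $\Omega$), and fix a smooth partition of unity $(\zeta_j)$ subordinate to $(A_j)$, so $\sum_j\zeta_j\equiv1$ and $\sum_jX_i\zeta_j\equiv0$ on $\Omega$ for every $i$. By the Leibniz rule $D_{X_i}(\zeta_ju)=\zeta_j\,D_{X_i}u+u\,X_i\zeta_j\,\mathscr H^Q$ one has $\zeta_ju\in BV_\G(\Omega)$, and both $\zeta_ju$ and $u\,X_i\zeta_j$ lie in $L^1(\Omega)$ with support in $A_j\Subset\Omega$. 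Using (ii)--(iii) I would pick $\delta_j>0$ so small that $\mathrm{supp}(\phi_{\delta_j}*(\zeta_ju))\subset A_j$, that $\|\phi_{\delta_j}*(\zeta_ju)-\zeta_ju\|_{L^1(\Omega)}<\varepsilon2^{-j}$ and that $\|\phi_{\delta_j}*(u\,X_i\zeta_j)-u\,X_i\zeta_j\|_{L^1(\Omega)}<\varepsilon2^{-j}$ for all $i$, and set $u_\varepsilon:=\sum_j\phi_{\delta_j}*(\zeta_ju)$. Local finiteness gives $u_\varepsilon\in C^\infty(\Omega)$ and $\|u_\varepsilon-u\|_{L^1(\Omega)}<\varepsilon$. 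For the variation, (i) and the Leibniz rule give $X_iu_\varepsilon=\Phi_i+\Psi_i$ with $\Phi_i:=\sum_j\phi_{\delta_j}*(\zeta_j\,D_{X_i}u)$ and $\Psi_i:=\sum_j\phi_{\delta_j}*(u\,X_i\zeta_j)$; since $\sum_ju\,X_i\zeta_j=0$ on $\Omega$ one rewrites $\Psi_i=\sum_j\big(\phi_{\delta_j}*(u\,X_i\zeta_j)-u\,X_i\zeta_j\big)$, so $\|\Psi\|_{L^1(\Omega)}<m\varepsilon$, whereas (iii) applied to the vector measure $\zeta_j\,D_Xu$ (supported in $A_j$, with $\zeta_j\ge0$, so $|\zeta_j\,D_Xu|=\zeta_j|D_Xu|$) yields $\int_\Omega|\Phi|\le\sum_j\int_\Omega\zeta_j\,d|D_Xu|=|D_Xu|(\Omega)$, the last equality by monotone convergence and $\sum_j\zeta_j\equiv1$. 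Hence $|D_Xu_\varepsilon|(\Omega)\le\int_\Omega|\Phi|+\int_\Omega|\Psi|\le|D_Xu|(\Omega)+m\varepsilon$, which closes the reduction.

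The hard part is the group-mollification toolkit of the second paragraph: the only genuinely sub-Riemannian input is that $f\mapsto\phi_\delta*f$ commutes with the left-invariant horizontal fields (so that the horizontal gradient of the regularization is exactly $D_Xu$ smeared by $\phi_\delta$), converges in $L^1_{\mathrm{loc}}$, and obeys the support and mass bounds. Once those facts are available, everything else — the Leibniz rule for $\zeta_ju$ and the simultaneous tuning of the radii $\delta_j$ that turns the exact cancellation $\sum_ju\,X_i\zeta_j=0$ into an $O(\varepsilon)$ error — is the verbatim Euclidean Anzellotti--Giaquinta argument.
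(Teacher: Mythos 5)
The paper does not give its own proof of this statement; it simply cites \cite{FSSCMeyers} and \cite{GaroNhiCPAM}, which prove Meyers--Serrin type density for general families of Carnot--Carath\'eodory vector fields (typically via Euclidean mollification plus a Friedrichs-type commutator estimate to control $[X_i,\,\text{mollifier}]$). Your proof is correct, but it takes a route that is special to the Carnot-group setting and arguably cleaner: by mollifying with the \emph{group} convolution $(\phi_\delta*f)(x)=\int_\G\phi_\delta(y)f(y^{-1}x)\,dy$, the left-invariance of the $X_i$ makes $X_i(\phi_\delta*f)=\phi_\delta*(X_if)$ an exact identity, so the commutator term that dominates the general vector-field argument disappears entirely; the remaining ingredients (unimodularity for the mass identity, continuity of translations in $L^1$, the partition-of-unity diagonal scheme with the cancellation $\sum_j X_i\zeta_j\equiv 0$) are then used exactly as in the Euclidean Anzellotti--Giaquinta proof. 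The only slightly imprecise point is your description of $\mathrm{supp}(\phi_\delta*\mu)$ as lying in ``the $\delta$-neighbourhood of $\mathrm{supp}\,\mu$'': with your convention it lies in $B(0,\delta)\cdot\mathrm{supp}\,\mu$, which is not the metric $\delta$-ball neighbourhood because the CC distance is left- but not right-invariant. This is harmless for the argument, since $B(0,\delta)\cdot K\subset A_j$ for $\delta$ small whenever $K\Subset A_j$, but it is worth stating correctly, as the distinction is exactly the sort of thing that evaporates in the abelian case and must be tracked here.
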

\begin{definition}
	\label{def:reducedboundary}
	 Let $ E \subseteq \G$ be a set with locally finite perimeter. We define the \emph{reduced boundary} $ \mathcal{F}E $ of $ E $ to be the set of points $ p\in \G $ such that $ P_\G(E;B(p,r))> 0 $ for all $ r > 0 $ and there exists
	\[
	\lim_{r \to 0}\dfrac{D_X\chi_E (B(p,r))}{P_\G(E;B(p,r))}=\lim_{r \to 0}\dfrac{D_X\chi_E (B(p,r))}{|D_X\chi_E|(B(p,r))} \eqqcolon \nu_E(p)\in \R^m,
	\]
	with $ |\nu_E(p)| = 1 $.
\end{definition}

\begin{definition}
	Let $\Omega\subseteq \G$ be an open set in a Carnot group $\G$. We say that a function $f\colon\Omega\to \R$ is of class $C^1_\G$ if $f$ is continuous and, for any basis $X=(X_1,\dots, X_m)$ of $\g_1$, the limit,
	\[
	X_if(x)\coloneqq\lim_{t\to 0}\frac{f(x\exp(tX_i))-f(x)}{t},
	\]
	exists and defines a continuous function for every $i=1,\ldots, m$ and any $x\in\Omega$. According to this definition we also denote by
	$\nabla_Xf\colon \Omega\to \R^m$ the vector valued function defined by
	\[
	\nabla_Xf\coloneqq (X_1f,\dots,X_mf).
	\]
\end{definition}

\begin{definition}
	\label{def:C1G}
	A set $\Sigma\subseteq \G$ is said to be a hypersurface of class $C^1_\G$ if, for every $p\in \Sigma$ there exists a neighborhood $U$ of $p$, and a function $f\colon U\to \R$ of class $C^1_\G$ such that
	\[
	\Sigma\cap U=\{q\in U: f(q)=0 \},
	\]
	and $\inf_U|\nabla_Xf|>0$, for any basis $X=(X_1,\dots,X_m)$ of $\g_1$.
\end{definition}

\begin{definition}
	\label{def:rectifiable}
	Let $E\subseteq \G$ be a measurable set. We say that $E$ is $C^1_\G$-\emph{rectifiable} (or simply \emph{rectifiable}), if there exists a family $\{\Gamma_j:j\in \mathbb N\}$ of $C^1_\G$-hypersurfaces such that
	\[
	\mathscr H^{Q-1}\left (E\setminus \bigcup_{j\in \mathbb N} \Gamma_j\right )=0,
	\]
	where $Q$ is the homogeneous dimension of $\G$ and $\mathscr H^{Q-1}$ denotes the $(Q-1)-$dimensional Hausdorff measure defined through the Carnot-Carath\'eodory distance.
\end{definition}

\begin{definition}
	For any $\nu \in \g_1\setminus \{0\}$, we define the \emph{vertical halfspace with normal $\nu$} by setting
	\[
	H_\nu\coloneqq \{x\in \G\colon\langle\pi_1\log x, \nu\rangle \geq 0\},
	\]
	where $\pi_1\colon\g\rightarrow\g_1$ is the horizontal projection on the Lie algebra.
	Notice that if $x\in \G$ is such that $\langle\pi_1\log x, \nu\rangle >0$, then $x^{-1}\in H_\nu^c$.
\end{definition}
% \footnote{Alla fine spostare la definizione di semispazio prima di questo punto. Pu\'o anche essere utile mettere questa osservazione in un remark in modo da poterlo richiamare}
We conclude this section with the following 
\begin{definition} Let $1\leq p\leq \infty$ and let $\Omega\subset\G$ be an open set. We set
\[
W^{1,p}_{\G}(\Omega)\coloneqq\{f\in L^p(\Omega)\colon\ X_jf\in L^p(\Omega),\ \forall j=1,\ldots, m\}.
\]
\end{definition}
\begin{definition}
The convolution of two functions in $f,g\colon\G\to \R$ is defined by
\[
(f* g)(x)\coloneqq\int_{\G}f(xy^{-1})g(y)\, dy=\int_{\G}g(y^{-1}x)f(y)\, dy,
\]
for every couple of functions for which the above integrals make sense. 
\end{definition}
\begin{remark}From this definition we see that if $L$ is any left invariant differential operator in $\G$, then $L(f * g)=f* Lg$ provided the integrals converge.  Moreover, if $\G$ is not abelian, we cannot write in general $f* Lg = Lf*g$.
\end{remark}

\section{Local minimizers and calibrations}
\noindent Throughout this section, $\G$ denotes a Carnot group and we denote by $\|x\|\coloneqq d(0,x)$, where $d$ is the CC distance introduced in Definition \ref{carnotdistance}. We however notice that the results we obtain still hold when $d(0,x)$ is replaced by any other homogeneous and symmetric norm on $\G$. We also fix a kernel  
$K\colon \G\to\R$ with the following property:
\begin{align}
\label{eq:nonnegativekernel}
&K\ge 0\qquad\qquad\quad\text{in}\,\G,\\
\label{eq:evenkernel}
&K(\xi^{-1})=K(\xi)\quad\text{for any}\,\xi\in\G,\\
\label{eq:stimaL1}
&\int_{\G}\min\{1,\|x\|\}K(x)\;dx<+\infty.
\end{align}
%\begin{equation}
%\label{eq:evenkernel}
%K(\xi^{-1})=K(\xi)\quad\text{for any}\,\xi\in\G,
%\end{equation}
%\begin{equation}
%\label{eq:stimaL1}
%\int_{\G}\min\{1,\|x\|\}K(x)\;dx<+\infty.
%\end{equation}
Define also for every measurable function $u\colon\G\to [0,+\infty]$ and every measurable set $\Omega\subseteq \G$ the functional
\begin{align}
J_K(u;\Omega)&\coloneqq\frac12\int_{\Omega}\int_{\Omega}K(y^{-1}x)|u(y)-u(x)|\;dydx+ \int_{\Omega}\int_{\Omega^c}K(y^{-1}x)|u(y)-u(x)|\;dydx \nonumber \\ \label{eq:J1J2}
&\,\eqqcolon\frac12J^1_K(u;\Omega)+J^2_K(u;\Omega).
\end{align}
We also denote by $J^i(E;\Omega)\coloneqq J^i(\chi_E;\Omega)$ for $i=1,2$. Moreover, for every measurable and disjoint sets $A,B\subseteq \G$, we define the interaction between $A$ and $B$ driven by the kernel $K$ as
\begin{equation}
\label{eq:interact}
L_K(A,B)\coloneqq\int_B\int_A K(y^{-1}x)\,dy\,dx.
\end{equation}
We set $P_K(E;\Omega)\coloneqq J_K(\chi_E;\Omega)\eqqcolon J(E;\Omega)$. Therefore,
\[
P_K(E;\Omega)=L_K(E^c\cap\Omega,E\cap\Omega)+L_K(E^c\cap\Omega,E\cap\Omega^c)+L_K(E\cap\Omega,E^c\cap\Omega^c);
\]
in particular, we have that
\[
L_K(E^c\cap\Omega,E\cap\Omega)=\frac{1}{2}J^1_K(E;\Omega),
\]
and
\[
L_K(E^c\cap\Omega,E\cap\Omega^c)+L_K(E\cap\Omega,E^c\cap\Omega^c)=J^2_K(E;\Omega).
\]
We can think of $J_K^1(\chi_E;\Omega)$ as the local part of $P_K(E;\Omega)$, in the sense that if $F$ is a measurable set such that $\mathscr H^{Q}((E\triangle F)\cap\Omega)=0$, then $J_K^1(F;\Omega)=J_K^1(E;\Omega)$.

It is worth noticing that for $\Omega=\G$ we get
\[
P_K(E;\G)=L_K(E,E^c).
\]
\begin{remark}
	For every measurable set $E\subseteq\G$ we notice that $P_K(E;\Omega)$ can also be written as
	\begin{equation}\label{eq:duetre}
	P_K(E;\Omega)=\frac 12\int_{(\G\times\G)\setminus(\Omega^c\times\Omega^c)}|\chi_E(y)-\chi_E(x)|K(y^{-1}x)\,dx\,dy.
	\end{equation}
	Indeed we can write
	\[
	\begin{aligned}
	\int_{(\G\times \G)\setminus(\Omega^c\times\Omega^c)}|&\chi_E(y)-\chi_E(x)|K(y^{-1}x)\,dx\,dy
	\\
	=& \int_{(\G\times \G)\setminus(\Omega^c\times\Omega^c)} |\chi_E(y)-\chi_E(x)|^2K(y^{-1}x)\,dx\,dy
	\\
	=&\int_{(\G\times \G)\setminus(\Omega^c\times\Omega^c)}(\chi_E(y)-\chi_E(y)\chi_E(x))K(y^{-1}x)\,dx\,dy \\
	&+\int_{(\G\times \G)\setminus(\Omega^c\times\Omega^c)}(\chi_E(x)-\chi_E(y)\chi_E(x))K(y^{-1}x)\,dx\,dy
	\\
	=&2 \int_{(\G\times \G)\setminus(\Omega^c\times\Omega^c)}\chi_E(x)\chi_{E^c}(y)K(y^{-1}x)\,dx\,dy\\
	=& 2L_K(E^c\cap \Omega, E\cap \Omega)+2L_K(E^c\cap \Omega, E\cap \Omega^c)+
	2L_K(E\cap\Omega;E^c\cap \Omega^c)\\=&2P_K(E;\Omega).
	\end{aligned}
	\]
\end{remark}

When $\G$ is the Euclidean space $\R^n$, a typical example of radial kernel satisfying \eqref{eq:nonnegativekernel}, \eqref{eq:evenkernel} and \eqref{eq:stimaL1} is given by $K(x)=|x|^{-n-\alpha}$ with $\alpha\in(0,1)$. We refer e.g.\ to \cite{Valdinoci} and references therein for an overview of the classical fractional perimeter's theory.

 %We notice that the connection between the fractional perimeter and the asymptotic behaviour of the fractional heat semigroup in Carnot Groups has been analyzed in \cite{FMPPS}.

On the other hand, if $\G$ is a general Carnot group with homogeneous dimension $Q$, and $\|\cdot\|$ is a homogeneous norm on $\G$, then, for every $\alpha\in (0,1)$, the kernel $K\colon\G\rightarrow \R$ defined by
\[
K(\xi)\coloneqq \|\xi\|^{-Q-\alpha},
\]
satisfies conditions \eqref{eq:nonnegativekernel}, \eqref{eq:evenkernel} and \eqref{eq:stimaL1}.

A homogeneous norm that has been considered in the literature is the one associated with the sub-Riemannian heat operator, see e.g.\ to \cite{Folland, FerrariFranchi, FMPPS} for some motivations. We here briefly describe its definition. Define the map $\widetilde R_\alpha\colon \G \rightarrow [0,+\infty)$ by letting
\[
\widetilde R_\alpha(x)\coloneqq -\frac{\alpha}{2\Gamma(-\alpha/2)}\int_0^{+\infty} t^{-\tfrac \alpha 2-1}h(t,x)\,dt.
\] 
Here $h\colon[0,+\infty)\times \G\rightarrow\R$ is the fundamental solution of the sub-Riemannian heat operator
\[
\mathcal H\coloneqq \partial_t+\mathcal L,
\]
where
\[
\mathcal L\coloneqq\sum_{i=1}^m X_i^2
\]
denotes the sub-Laplacian associated with a basis $(X_1,\dots,X_m)$ of the horizontal layer $\mathfrak g_1$.
In this case one has $\widetilde R_\alpha(x^{-1})=\widetilde R_\alpha(x)$ and $\widetilde R_\alpha(\delta_\lambda x)=\lambda^{-\alpha-Q}\widetilde R_\alpha(x)$ for any $x\in \G$ and any $\lambda\geq 0$, and the quantity
\[
\|x\|_\alpha\coloneqq \left(\widetilde R_\alpha(x)\right)^{-\tfrac 1{\alpha+Q}},
\]
defines a homogeneous symmetric norm on $\G$. In particular, the kernel
\[
K_\alpha(\xi)\coloneqq \frac 1 {\|\xi\|^{Q+\alpha}_\alpha}
\]
 satisfies conditions \eqref{eq:nonnegativekernel}, \eqref{eq:evenkernel}, \eqref{eq:stimaL1} and \eqref{eq:infcappa}, and hence all the results obtained in this paper apply to the special case $K=K_\alpha$. 

We next state and prove some facts that will be useful throughout the paper. 
\begin{proposition}\label{prop:frechetkolmogorov}
	Let $\Omega\subset\G$ be an open set and let $u\in BV_\G(\Omega)$. Let $p\in \Omega$, $r>0$ such that $\overline{B(p,2r)}\subseteq \Omega$ and let $g\in B(0,r)$.
	Then
	\[
	\int_{B(p,r)}|u(x\cdot g)-u(x)|\,dx\leq d(0,g)|D_Xu|(\Omega).
	\]
	In particular, if $\Omega=\G$ and $u\in BV_\G(\G)$, one has
	\begin{equation}\label{eq:stimaglobale}
	\int_{\G}|u(x\cdot g)-u(x)|\,dx\leq d(0,g)|D_Xu|(\G),
	\end{equation}
	for every $g\in \G$.
\end{proposition}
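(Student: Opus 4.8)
The plan is to reduce the inequality to the case of smooth functions via the approximation result in Proposition \ref{approx}, and then to prove the smooth case by integrating the derivative along a horizontal curve joining $0$ to $g$. First I would recall that, since $g\in B(0,r)$, the CC distance $d(0,g)<r$, so by Chow's theorem there is a horizontal curve $\gamma\colon[0,1]\to\G$ with $\gamma(0)=0$, $\gamma(1)=g$, and length $L_\G(\gamma)$ arbitrarily close to $d(0,g)$; moreover one can choose $\gamma$ so that its image stays inside $B(0,r)$, hence the whole curve of points $x\cdot\gamma(t)$ stays inside $B(p,2r)\subseteq\Omega$ for $x\in B(p,r)$. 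Writing $\gamma'(t)=\sum_{j=1}^m a_j(t)X_j(\gamma(t))$ with $\sum_j a_j(t)^2=|\gamma'(t)|^2$, the key identity for $u\in C^\infty(\Omega)$ is
\[
u(x\cdot g)-u(x)=\int_0^1 \frac{d}{dt}u(x\cdot\gamma(t))\,dt=\int_0^1\sum_{j=1}^m a_j(t)\,(X_ju)(x\cdot\gamma(t))\,dt,
\]
which holds because $X_j$ is left-invariant, so the directional derivative of $u$ at $x\cdot\gamma(t)$ along the curve is exactly $\sum_j a_j(t)(X_ju)(x\cdot\gamma(t))$.

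Next I would take absolute values, use $|\sum_j a_j(t)(X_ju)|\le |\gamma'(t)|\,|\nabla_Xu|$ by Cauchy--Schwarz, integrate over $x\in B(p,r)$, and apply Fubini to get
\[
\int_{B(p,r)}|u(x\cdot g)-u(x)|\,dx\le\int_0^1|\gamma'(t)|\left(\int_{B(p,r)}|\nabla_Xu|(x\cdot\gamma(t))\,dx\right)dt.
\]
By left-invariance of the Haar measure $\mathscr H^Q$, the inner integral equals $\int_{B(p,r)\cdot\gamma(t)}|\nabla_Xu|(y)\,dy\le\int_\Omega|\nabla_Xu|\,dy=|D_Xu|(\Omega)$, since $B(p,r)\cdot\gamma(t)\subseteq B(p,2r)\subseteq\Omega$. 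Hence the right-hand side is at most $L_\G(\gamma)\,|D_Xu|(\Omega)$, and letting $\gamma$ approach a length-minimizing curve gives the bound $d(0,g)\,|D_Xu|(\Omega)$ for smooth $u$.

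Finally I would pass to a general $u\in BV_\G(\Omega)$: take the sequence $(u_k)\subset C^\infty(\Omega)$ from Proposition \ref{approx} with $u_k\to u$ in $L^1(\Omega)$ and $|D_Xu_k|(\Omega)\to|D_Xu|(\Omega)$. The left-invariance of Haar measure and $B(p,r)\cdot g\subseteq\Omega$ give $\int_{B(p,r)}|u_k(x\cdot g)-u(x\cdot g)|\,dx\le\int_\Omega|u_k-u|\to0$, so $u_k(\cdot\,g)\to u(\cdot\,g)$ in $L^1(B(p,r))$; passing to the limit in the smooth inequality yields the claim. The global statement \eqref{eq:stimaglobale} follows by the same argument with $\Omega=\G$ (where every $B(p,r)\cdot\gamma(t)\subseteq\G$ automatically, so no constraint on $g$ is needed), or alternatively by exhausting $\G$ with large balls. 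The main obstacle I anticipate is the geometric bookkeeping needed to guarantee that the connecting curve $\gamma$, and all its left translates $x\cdot\gamma(t)$ with $x\in B(p,r)$, remain inside $\Omega$; this is where the hypothesis $\overline{B(p,2r)}\subseteq\Omega$ together with $g\in B(0,r)$ is used, and one must check that length-minimizing (or nearly minimizing) curves from $0$ to $g$ can indeed be taken inside $B(0,r)$ — which follows since any curve leaving $B(0,r)$ already has length $\ge r>d(0,g)$.
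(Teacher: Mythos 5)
Your proposal is correct and follows essentially the same route as the paper's proof: reduce to smooth $u$ via Proposition \ref{approx}, write $u(x\cdot g)-u(x)$ as an integral of the horizontal derivative along the left-translated curve $x\cdot\gamma(t)$, bound by Cauchy--Schwarz, integrate over $B(p,r)$ and apply Fubini plus left-invariance of the Haar measure, using $\overline{B(p,2r)}\subseteq\Omega$ to keep $x\cdot\gamma(t)$ inside $\Omega$. The only cosmetic difference is that the paper invokes an actual geodesic from $0$ to $g$ (available since $(\G,d)$ is a complete locally compact length space), whereas you work with near-length-minimizers and pass to the limit; your explicit observation that such curves can be kept in $B(0,r)$, and your explicit $L^1$ limiting step for the BV approximation, are correct and fill in details the paper leaves implicit.
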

\begin{proof}
	Fix a basis $(X_1,\dots, X_m)$ of $\g_1$. By Proposition \ref{approx} we can assume without loss of generality  that $u\in C^\infty(\Omega)$.
Let $g\in B(0,r)$ with $g\neq 0$ (if $g=0$ the thesis is trivial) and let $\delta\coloneqq d(0,g)>0$. Take a geodesic  $\gamma\colon[0,\delta]\to B(0,r)$ satisfying 
	\[
	\gamma(0)=0,\quad \gamma (\delta)=g\quad \text{ and }\quad \dot \gamma(t)=\sum_{i=1}^mh_i(t)X_i(\gamma(t))\quad \text{ for a.\ \!e.\ $t\in [0,\delta]$},
	\]
	where $(h_1,\dots,h_m)\in L^\infty([0,\delta];\R ^m)$ with $\|(h_1,\dots,h_m)\|_\infty\leq 1$. Notice that, for every $x\in \G$, the curve $\gamma_x\colon [0,\delta]\to B(x,r)$ defined by $\gamma_x(t)=x\cdot \gamma (t)$ is a geodesic joining $x$ and $x\cdot g$, and $\|\dot\gamma_x\|_\infty=\|(h_1,\dots,h_m)\|_\infty$. Therefore, for any $x\in B(p,r)$, one has 
	\[
	|u(x\cdot g)- u(x)|=\left |\int_0^{\delta} \frac{d}{dt}u(\gamma_x(t))\,dt\right | \leq \int_0^{\delta}|\nabla_X u(\gamma_x(t) )|\,dt.
	\]
	Integrating both sides on $B(p,r)$ we get
	\[
	\int_{B(p,r)}|u(x\cdot g)- u(x)|\,dx\leq\int_{B(p,r)}\int_0^\delta|\nabla_X u\left (x\cdot\gamma(t)\right )|\,dt\,dx,
	\]
	and exchanging the order of integration we conclude that
	\[
	\int_{B(p,r)}|u(x\cdot g)- u(x)|\,dx\leq \int_0^\delta\int_{B(p,r)}|\nabla_X u(x\cdot \gamma(t) )|\,d x\,dt,
	\]
	where we notice that the curve $\gamma$ depends on $g$. Since $\gamma(t)\in B(0,r)$ for all $t\in [0,\delta]$ and since $x\in B(p,r)$, then $x\cdot \gamma(t)\in B(0,2r)$ for all $t\in [0,\delta]$. Indeed, by the triangular inequality one has
	\[
	d(x\cdot \gamma(t), p)\leq d(x\cdot \gamma(t), x)+d(x,p)= d(\gamma(t),0)+d(x,p)\leq r+r=2r.
	\]
Thus, we finally get 	
	\begin{align}\nonumber
	\int_{B(p,r)}|u(x\cdot g)- u(x)|\,dx&\leq d(0,g) \int_{B(p,2r)}|\nabla_X u(x)|\,d x\\
	\label{sxa}
	&\leq d(0,g) |D_X u|(\Omega).\qedhere
	\end{align}	
	\qedhere
	%Finally, since $\overline{B(p,2r)}\subseteq \Omega$ one gets
	%\[
	%\begin{aligned}
	%\int_{B(p,r)}|u(x\cdot g)- u(x)|\,dx&\leq\left (d(g,0)+\varepsilon\right )\int_0^1 |D_Xu|(B(p,2r))\,dt\\&\leq \left (d(g,0)+\varepsilon\right ) |D_Xu|(\Omega).
	%\end{aligned}
	%\]
	%By the arbitrariness of $\varepsilon$, the proof is complete.
\end{proof}
\begin{corollary}\label{mix}
Let $u\in L^1(\G)$.  Then 
\begin{equation*}
\lim_{q\to 0}\|\tau_q u - u\|_{L^1(\G)}=0.
\end{equation*}
\end{corollary}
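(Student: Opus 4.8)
The plan is to use a standard density argument together with the translation estimate from Proposition \ref{prop:frechetkolmogorov}. First I would reduce to the case of a "nice" function. Fix $u \in L^1(\G)$ and $\epsilon > 0$. Since $C^1_c(\G)$ (or more simply $BV_\G(\G) \cap L^1(\G)$, or compactly supported Lipschitz functions) is dense in $L^1(\G)$, choose $v \in C^1_c(\G)$ with $\|u - v\|_{L^1(\G)} < \epsilon$. Then estimate, using left-invariance of the Haar measure $\mathscr H^Q$,
\[
\|\tau_q u - u\|_{L^1(\G)} \le \|\tau_q u - \tau_q v\|_{L^1(\G)} + \|\tau_q v - v\|_{L^1(\G)} + \|v - u\|_{L^1(\G)} = 2\|u - v\|_{L^1(\G)} + \|\tau_q v - v\|_{L^1(\G)},
\]
where the identity $\|\tau_q u - \tau_q v\|_{L^1(\G)} = \|u - v\|_{L^1(\G)}$ follows from the change of variables $x \mapsto qx$ (recall $(\tau_q u)(x) = u(q^{-1}x)$, and $\mathscr H^Q$ is a Haar measure, hence left-invariant).

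It then remains to handle $\|\tau_q v - v\|_{L^1(\G)}$ for the nice function $v$. One option is to observe that a compactly supported $C^1$ function $v$, or any compactly supported Lipschitz function, lies in $BV_\G(\G)$ with $|D_X v|(\G) < +\infty$; applying the global estimate \eqref{eq:stimaglobale} from Proposition \ref{prop:frechetkolmogorov} with $g = q^{-1}$ (noting $(\tau_q v)(x) = v(q^{-1} x) = v(x \cdot (x^{-1} q^{-1} x))$ requires care, so it is cleaner to write $\|\tau_q v - v\|_{L^1} = \int_\G |v(q^{-1}x) - v(x)|\,dx = \int_\G |v(y) - v(qy)|\,dy$ after the substitution $y = q^{-1}x$, and then bound $\int_\G |v(y\cdot g) - v(y)|\,dy$ is not quite the same as $\int_\G|v(qy)-v(y)|\,dy$). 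To avoid this mismatch between left and right translations, the cleanest route is a direct one: since $v$ is uniformly continuous with compact support, $\tau_q v \to v$ uniformly as $q \to 0$ (because $d(q^{-1}x, x) = d(x, qx)$ need not be small, but $d(q^{-1}x,x) = \|x^{-1}q^{-1}x\|$; instead use that $v$ continuous and compactly supported implies $v(q^{-1}x) \to v(x)$ uniformly in $x$ by uniform continuity of $v$ on $\G$, since $q^{-1}x \to x$ uniformly on compact sets as $q\to 0$), and the supports of $\tau_q v$ stay in a fixed compact set for $q$ near $0$; hence $\|\tau_q v - v\|_{L^1(\G)} \le \mathscr H^Q(\text{compact}) \cdot \|\tau_q v - v\|_\infty \to 0$. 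Combining, $\limsup_{q\to 0}\|\tau_q u - u\|_{L^1(\G)} \le 2\epsilon$, and letting $\epsilon \to 0$ finishes the proof.

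The main obstacle is purely bookkeeping: making sure the translation in the statement (a left translation $\tau_q$) is handled correctly, since Proposition \ref{prop:frechetkolmogorov} is phrased in terms of the right action $x \mapsto x \cdot g$. The safest fix is to not invoke Proposition \ref{prop:frechetkolmogorov} at all for the final limit and instead rely on uniform continuity of the compactly supported approximant $v$, which makes the convergence $\|\tau_q v - v\|_{L^1} \to 0$ completely elementary; the density step and the translation-invariance of $\|\cdot\|_{L^1(\G)}$ (which only needs left-invariance of Haar measure, already recorded in the preliminaries) do the rest.
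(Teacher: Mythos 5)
Your proof is correct, and the outer skeleton (approximate in $L^1$ by a nice function, apply the triangle inequality, use left-invariance of the Haar measure so that the translated error term equals the original error term) is the same as the paper's: the paper takes $u_h\in C_c^\infty(\G)$ with $\|u-u_h\|_{L^1(\G)}\le \varepsilon/4$ and estimates exactly as you do. The genuine difference is in the base case for the compactly supported approximant: the paper dispatches it with a one-line appeal to \eqref{eq:stimaglobale}, whereas you argue from uniform continuity together with containment of supports.

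Your worry about left versus right translations is not mere bookkeeping; it is a real point. Proposition \ref{prop:frechetkolmogorov} controls $\int_\G |v(x\cdot g)-v(x)|\,dx$, a \emph{right} translation of the argument, by integrating $|\nabla_X v|$ along the left-invariant horizontal flow $t\mapsto x\cdot\gamma(t)$. The operator $\tau_q$ in the corollary translates the argument from the \emph{left} (the preliminaries set $\tau_q z=qz$, and this is what is needed when the corollary is invoked in the proof of Theorem \ref{th:compactness}), and the curve $t\mapsto q(t)\cdot x$ is in general not horizontal for the left-invariant distribution on a non-abelian $\G$. Hence \eqref{eq:stimaglobale} does not apply verbatim as the paper's phrasing suggests. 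The paper's one-liner can be repaired, for instance by applying \eqref{eq:stimaglobale} to $\check v(z)\coloneqq v(z^{-1})\in C_c^\infty(\G)$ and using that inversion preserves the bi-invariant Haar measure, but this extra step is not written out. Your uniform-continuity argument sidesteps the issue entirely and is more elementary; what it gives up is the quantitative Lipschitz-in-$q$ bound $\|\tau_q v - v\|_{L^1(\G)}\le d(0,q)\,|D_X v|(\G)$ that the (repaired) route through Proposition \ref{prop:frechetkolmogorov} would deliver, though that rate is not needed for the corollary or for its later use.
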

\begin{proof}
If $u\in C^{\infty}_c(\G)$ the conclusion follows using \eqref{eq:stimaglobale}.
Let $(u_{h})$ be a sequence in $C_c^\infty(\G)$ with $u_h\to u$ in $L^1(\G)$ and let $\varepsilon>0$. Fix $h$ be big enough so that $\|u-u_h\|_{L^1(\G)}\leq \frac{\varepsilon}{4}$. Then
\begin{align*}
\|\tau_q u- u\|_{L^1(\G)}&\leq \|\tau_q u - \tau_q u_h\|_{L^1(\G)}+\|\tau_q u_h - u_h\|_{L^1(\G)}+\| u_h - u\|_{L^1(\G)}\\
&=2\| u - u_h\|_{L^1(\G)}+\|\tau_q u_h - u_h\|_{L^1(\G)}\\
&\leq\frac{\varepsilon}{2}+\|\tau_q u_h - u_h\|_{L^1(\G)}
\end{align*}
and the conclusion follows taking $d(0,q)$ small enough to have
$\|\tau_q u_h - u_h\|_{L^1(\G)}\leq \frac{\varepsilon}{2}$.
\end{proof}

We now give a sufficient condition on $E$ and $\Omega$ in order to have $P_K(E;\Omega)<+\infty$. The proof is inspired by the one present in \cite{pagliari}.
\begin{proposition}\label{prop:perimetroKfinito}
	Let $E,F\subseteq \G$ be two measurable sets with $\mathscr H^{Q}(E\cap F)=0$. Then one has
	\[
	L_K(E,F)\leq V(E,F)\int_\G \min\{1,d(0,\xi)\}K(\xi)\,d\xi,
	\] 
	where 
	\[
	V(E,F)\coloneqq\min\left\{\max\left\{\frac {P_\G(E)}2,\mathscr H^Q(E)\right\},\max\left\{\frac {P_\G(F)}2,\mathscr H^Q(F)\right\}\right\}.
	\]
\end{proposition}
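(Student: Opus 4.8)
The statement to prove is the bound
\[
L_K(E,F)\le V(E,F)\int_\G \min\{1,d(0,\xi)\}K(\xi)\,d\xi,
\]
where $V(E,F)$ is the stated min–max quantity. Since the right-hand side is symmetric in the roles played by $\max\{P_\G(E)/2,\mathscr H^Q(E)\}$ and $\max\{P_\G(F)/2,\mathscr H^Q(F)\}$, and since $L_K(E,F)=L_K(F,E)$ by the evenness \eqref{eq:evenkernel} of $K$ (the change of variables $\xi=y^{-1}x\mapsto \xi^{-1}$ swaps the roles), it suffices to prove
\[
L_K(E,F)\le \max\Big\{\tfrac{P_\G(E)}2,\mathscr H^Q(E)\Big\}\int_\G\min\{1,d(0,\xi)\}K(\xi)\,d\xi,
\]
and then take the minimum over the two choices. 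So the plan is to fix the set $E$ and estimate the interaction by integrating first in the variable that ranges over $F$.

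The key computation is to rewrite $L_K(E,F)$ using the substitution $\xi=y^{-1}x$. First I would write
\[
L_K(E,F)=\int_F\int_E K(y^{-1}x)\,dy\,dx
=\int_F\Big(\int_E K(y^{-1}x)\,dy\Big)dx ,
\]
and, in the inner integral, substitute $y=x\xi^{-1}$ (equivalently $\xi=y^{-1}x$); since left (and right) translations and inversion preserve $\mathscr H^Q$, we get $\int_E K(y^{-1}x)\,dy=\int_{\{\xi:\, x\xi^{-1}\in E\}}K(\xi)\,d\xi$. Using $\chi_F(x)\le 1$ and then exchanging the order of integration (Tonelli, everything nonnegative),
\[
L_K(E,F)\le \int_\G K(\xi)\Big(\int_\G \chi_F(x)\,\chi_E(x\xi^{-1})\,dx\Big)d\xi .
\]
Now I split the $\xi$-integral into $\{\|\xi\|<1\}$ and $\{\|\xi\|\ge 1\}$. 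On the far region $\|\xi\|\ge1$ we simply bound the inner integral by $\int_\G\chi_E(x\xi^{-1})\,dx=\mathscr H^Q(E)$ (right-invariance of Haar measure), which contributes $\mathscr H^Q(E)\int_{\|\xi\|\ge1}K(\xi)\,d\xi$. On the near region $\|\xi\|<1$, I bound $\chi_F\le 1$ as well and recognize
\[
\int_\G \chi_E(x\xi^{-1})\,\chi_{E^c}\!\cup\!\ldots
\]
— more precisely, since $\int_\G|\chi_E(x\xi^{-1})-\chi_E(x)|\,dx$ controls $\int_\G \chi_F(x)\chi_E(x\xi^{-1})\,dx$ only when $F\subseteq E^c$; in general one should instead observe that $\chi_F(x)\chi_E(x\xi^{-1})\le \chi_E(x\xi^{-1})$ and also, because $\mathscr H^Q(E\cap F)=0$, $\chi_F(x)\chi_E(x\xi^{-1})\le |\chi_E(x\xi^{-1})-\chi_E(x)|$ for a.e.\ $x$ (when $x\in F$ then $x\notin E$ up to null sets, so $\chi_E(x)=0$). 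Hence on $\|\xi\|<1$,
\[
\int_\G\chi_F(x)\chi_E(x\xi^{-1})\,dx\le \int_\G|\chi_E(x\xi^{-1})-\chi_E(x)|\,dx .
\]

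The remaining step is to invoke Proposition \ref{prop:frechetkolmogorov}: if $P_\G(E)<\infty$, then \eqref{eq:stimaglobale} with $u=\chi_E$ and $g=\xi^{-1}$ gives $\int_\G|\chi_E(x\xi^{-1})-\chi_E(x)|\,dx\le d(0,\xi^{-1})\,|D_X\chi_E|(\G)=\|\xi\|\,P_\G(E)$ (using that $d$ is symmetric, so $d(0,\xi^{-1})=\|\xi\|$). Therefore the near region contributes at most $P_\G(E)\int_{\|\xi\|<1}\|\xi\|K(\xi)\,d\xi$. (If $P_\G(E)=\infty$ the asserted bound is trivial; if $\mathscr H^Q(E)=\infty$ likewise, with the convention that the right-hand side is $+\infty$.) Adding the two regions,
\[
L_K(E,F)\le P_\G(E)\!\!\int_{\|\xi\|<1}\!\!\|\xi\|K(\xi)\,d\xi+\mathscr H^Q(E)\!\!\int_{\|\xi\|\ge1}\!\!K(\xi)\,d\xi
\le \max\Big\{\tfrac{P_\G(E)}2,\mathscr H^Q(E)\Big\}\cdot 2\!\!\int_\G\!\!\min\{1,\|\xi\|\}K(\xi)\,d\xi,
\]
wait — the factor of $2$ there is off by a constant, so I would instead keep the two integrals separate and note $\int_{\|\xi\|<1}\|\xi\|K+\int_{\|\xi\|\ge1}K=\int_\G\min\{1,\|\xi\|\}K\,d\xi$, while $P_\G(E)\le 2\max\{P_\G(E)/2,\mathscr H^Q(E)\}$ and $\mathscr H^Q(E)\le \max\{P_\G(E)/2,\mathscr H^Q(E)\}$; the sharper route is to observe $P_\G(E)\int_{\|\xi\|<1}\|\xi\|K + \mathscr H^Q(E)\int_{\|\xi\|\ge1}K \le \max\{\tfrac{P_\G(E)}{2},\mathscr H^Q(E)\}\big(2\int_{\|\xi\|<1}\|\xi\|K+\int_{\|\xi\|\ge1}K\big)$, and to get exactly the stated constant one uses $2\int_{\|\xi\|<1}\|\xi\|K\le 2\int_{\|\xi\|<1}\min\{1,\|\xi\|\}K$ — so actually the natural bound carries a $\tfrac12$ improvement on the perimeter term precisely matching $P_\G(E)/2$. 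I would reconcile this by being careful to write $L_K(E,F)\le \tfrac{P_\G(E)}{2}\cdot 2\int_{\|\xi\|<1}\|\xi\|K + \mathscr H^Q(E)\int_{\|\xi\|\ge 1}K$ and then pull out the max. Finally, taking the minimum over the two symmetric estimates yields exactly $V(E,F)\int_\G\min\{1,d(0,\xi)\}K(\xi)\,d\xi$, completing the proof.

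\medskip

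The main obstacle is purely bookkeeping: getting the constant exactly right so that the perimeter term comes with $P_\G(E)/2$ rather than $P_\G(E)$, which forces one to track the factor $2$ coming from splitting $|\chi_E(x\xi^{-1})-\chi_E(x)|$ versus the single-copy bound $\chi_E(x\xi^{-1})$ and to use the symmetry $L_K(E,F)=L_K(F,E)$ at the right moment. The measure-theoretic point that $\chi_F(x)\chi_E(x\xi^{-1})\le|\chi_E(x\xi^{-1})-\chi_E(x)|$ a.e.\ relies crucially on the hypothesis $\mathscr H^Q(E\cap F)=0$, so I would state that reduction explicitly. Everything else — Tonelli, invariance of Haar measure under translations and inversion, and the translation estimate \eqref{eq:stimaglobale} — is available from the material already developed above.
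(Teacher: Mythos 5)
Your overall strategy is the same as the paper's: reduce by symmetry to bounding by $\max\{P_\G(E)/2,\mathscr H^Q(E)\}$, change variables via $\xi=y^{-1}x$, split the $\xi$-integral at $\|\xi\|=1$, bound the far part by $\mathscr H^Q(E)$ and the near part via the translation estimate \eqref{eq:stimaglobale} from Proposition \ref{prop:frechetkolmogorov}. However, there is a genuine gap in the near-region estimate, and your own discussion at the end shows you sensed the problem without actually resolving it.

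The issue is your pointwise bound $\chi_F(x)\chi_E(x\xi^{-1})\le|\chi_E(x\xi^{-1})-\chi_E(x)|$. It is correct, but wasteful by a factor of $2$: integrating gives
\[
\int_\G \chi_F(x)\chi_E(x\xi^{-1})\,dx\le\int_\G|\chi_E(x\xi^{-1})-\chi_E(x)|\,dx\le\|\xi\|P_\G(E),
\]
whereas the statement requires the coefficient $P_\G(E)/2$, not $P_\G(E)$. The final "reconciliation" you propose does not repair this: after pulling out the max you obtain $\max\{P_\G(E)/2,\mathscr H^Q(E)\}\big(2\int_{\|\xi\|<1}\|\xi\|K+\int_{\|\xi\|\ge1}K\big)$, which is not $\max\{P_\G(E)/2,\mathscr H^Q(E)\}\int_\G\min\{1,\|\xi\|\}K$ and in general exceeds it.

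The fix, which is what the paper does, is to observe that the relevant quantity is not merely dominated by but actually \emph{equal to} half of the symmetrized integral: first enlarge $F$ to $E^c$ (justified since $\mathscr H^Q(E\cap F)=0$, so $F\subseteq E^c$ up to a null set and $L_K$ is monotone), and then use the exact identity, as in \eqref{eq:duetre},
\[
L_K(E,E^c)=\frac12\int_\G\int_\G K(\xi^{-1}\eta)\,|\chi_E(\xi)-\chi_E(\eta)|\,d\eta\,d\xi.
\]
Concretely, writing $\chi_{E^c}(x)=1-\chi_E(x)$ and using right-invariance of Haar measure,
\[
\int_\G \chi_{E^c}(x)\chi_E(x\xi^{-1})\,dx
=\mathscr H^Q(E)-\mathscr H^Q(E\cap E\xi)
=\frac12\,\mathscr H^Q(E\triangle E\xi)
=\frac12\int_\G|\chi_E(x\xi^{-1})-\chi_E(x)|\,dx,
\]
so the factor $1/2$ is an equality, not an inequality you can afford to lose. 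Once this replaces your one-sided pointwise bound, the near-region estimate produces $\frac{P_\G(E)}{2}\int_{\|\xi\|<1}\|\xi\|K(\xi)\,d\xi$ and the far-region estimate produces $\mathscr H^Q(E)\int_{\|\xi\|\ge1}K(\xi)\,d\xi$, whose sum is bounded by $\max\{P_\G(E)/2,\mathscr H^Q(E)\}\int_\G\min\{1,\|\xi\|\}K(\xi)\,d\xi$ as required.
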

\begin{proof}
	Without loss of generality we can assume
	\[
	V(E,F)=\max\displaystyle\left\{\frac {P_\G(E)}2,\mathscr H^Q(E)\right\}<+\infty.
	\]
	Up to modifying $E$ on a set of measure zero we can also assume that $F\subseteq E^c$. Therefore we have
	\begin{equation}\label{eq:perimetrovolume}
	\begin{aligned}
	L_K(E,F)\leq L_K(E,E^c)&=\frac 12\int_\G\int_\G K(\xi^{-1}\eta)|\chi_E(\xi)-\chi_E(\eta)|\,d\eta\,d\xi\\
	&= \frac 12\int_\G\int_\G K(\xi)|\chi_E(\eta\xi)-\chi_E(\eta)|\,d\eta\,d\xi\\
	&=\frac 12\int_{B(0,1)} K(\xi)\int_\G |\chi_E(\eta\xi)-\chi_E(\eta)|d\eta\, d\xi\\
	&\hphantom{=}+\frac 12\int_{\G \setminus B(0,1)} K(\xi)\int_\G |\chi_E(\eta\xi)-\chi_E(\eta)|d\eta\, d\xi.
	\end{aligned}
	\end{equation}
	Since $E$ has finite perimeter in $\G$, by Proposition \ref{prop:frechetkolmogorov} for every $\xi\in\G$ we can write
	\[
	\int_\G |\chi_E(\eta\xi)-\chi_E(\eta)|d\eta\leq d(0,\xi)P_\G(E).
	\]
	On the other hand, since $\mathscr H^Q(E)<+\infty$, we can also write
	\[
	\int_\G |\chi_E(\eta\xi)-\chi_E(\eta)|d\eta\leq 2 \mathscr H^Q(E).
	\]
	Using this two facts in the last part of \eqref{eq:perimetrovolume} gives us
	\[
	L_K(E,F)\leq \frac {P_\G(E)}2\int_{B(0,1)} d(0,\xi)K(\xi)\,d\xi+\mathscr H^Q(E)\int_{\G\setminus B(0,1)}K(\xi)\, d\xi,
	\]
	and therefore
	\[
	L_K(E,F)\leq \max\left\{ \frac{P_\G(E)}2, \mathscr H^Q(E)\right\}\int_\G \min\{1,d(0,\xi)\}K(\xi)\,d\xi. \qedhere
	\]
\end{proof}

Now, we adapt the notion of nonlocal calibration given in \cite{pagliari} in the Euclidean setting. We refer to \cite{Cabre} to point out the link between such a notion and the notion of (local) calibration of a set.

\begin{definition}\label{def:calibration}
	Let $u\colon\G\to [0,1]$ and $\zeta\colon\G\times\G\to[-1,1]$ be measurable functions. We say that $\zeta$ is a \emph{calibration} for $u$ if the following two facts hold.
	\begin{itemize}
		\item[(i)] The map $F_{\varepsilon}(p)=\int_{\mathbb{G}\setminus B(p,\varepsilon)}K(y^{-1}p)(\zeta(y,p)-\zeta(p,y))\;dy$ is such that
		\begin{equation}\label{eq:limiteazero}
		\lim_{\varepsilon\to0}\|F_\varepsilon\|_{L^1(\G)}=0.
	    \end{equation}
	  
	    \item[(ii)] for almost every $(p,q)\in \G\times\G$ such that $u(p)\neq u(q)$ one has
	    \begin{equation}\label{eq:identitacalibra}
	    \zeta(p,q)(u(q)-u(p))=|u(q)-u(p)|.
	    \end{equation}
	\end{itemize}
\end{definition}

\begin{remark}\label{rem:antisymmetric}
	If $\zeta\colon\G\times\G\to[-1,1]$ is a calibration for $u\colon\G\to[0,1]$, then also the antisymmetric function $\widehat\zeta(p,q)\coloneqq \frac12(\zeta(p,q)-\zeta(q,p))$ is a calibration for $u$.
\end{remark}
The proof of the following Theorem follows closely the one given in  \cite[Theorem 2.3]{pagliari}.
\begin{theorem}\label{th:calibrationimpliesminimizer} %Let %$
%\mathcal M(\G;[0,1]):=\{u:\bbG\to [0,1], \mbox{measurable}\}.$
	Let $\Omega\subset\G$ be an open set and let $E_0\subseteq \G$ be a measurable set such that $P_K(E_0;\Omega)<+\infty$ and define 
	\begin{equation}
	\label{eq:setofcompetitors}
	\mathcal F\coloneqq \{v\colon\bbG\to [0,1]\ \text{measurable}\ |\  v=\chi_{E_0} \text{ on } \Omega^c\}.
	\end{equation}
	Let $u\in \mathcal F$ and let $\zeta\colon \G\times\G\to [-1,1]$ be a calibration for $u$. Then
	\[
	J_K(u;\Omega)\leq J_K(v;\Omega),
	\]
	 for every $v\in \mathcal F$. Moreover, if $\widetilde u \in \mathcal F$ is such that $J_K(\widetilde u;\Omega)\leq J_K(u;\Omega)$, then $\zeta$ is a calibration for $\widetilde u$.
\end{theorem}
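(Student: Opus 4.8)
The plan is to compute the difference $J_K(v;\Omega) - J_K(u;\Omega)$ for an arbitrary competitor $v \in \mathcal F$ and show it is nonnegative by using the calibration $\zeta$ as a kind of divergence-type object. First I would introduce, for a fixed measurable function $w\colon\G\to[0,1]$, the natural "calibrated energy"
\[
\mathcal E_\zeta(w)\coloneqq \frac12\int_{(\G\times\G)\setminus(\Omega^c\times\Omega^c)} \zeta(p,q)\big(w(q)-w(p)\big)K(q^{-1}p)\,dp\,dq.
\]
By property (ii) of Definition \ref{def:calibration} we have $\zeta(p,q)(u(q)-u(p)) = |u(q)-u(p)|$ for a.e.\ $(p,q)$ with $u(p)\neq u(q)$, and the integrand vanishes when $u(p)=u(q)$; hence $\mathcal E_\zeta(u) = J_K(u;\Omega)$. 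On the other hand, since $|\zeta|\le 1$, for every competitor $v$ we have $\zeta(p,q)(v(q)-v(p)) \le |v(q)-v(p)|$ pointwise, so $\mathcal E_\zeta(v) \le J_K(v;\Omega)$. Therefore it suffices to prove
\begin{equation}\label{eq:keyidentity}
\mathcal E_\zeta(v) = \mathcal E_\zeta(u) \qquad \text{for every } v\in\mathcal F,
\end{equation}
i.e.\ that $\mathcal E_\zeta$ depends only on the values of $w$ on $\Omega^c$, which are fixed to be $\chi_{E_0}$ for all members of $\mathcal F$.

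To establish \eqref{eq:keyidentity} I would show that $\mathcal E_\zeta(v)-\mathcal E_\zeta(u)$ can be rewritten, after using the symmetry/change of variables $p\leftrightarrow q$ and the change of variable $q^{-1}p = y^{-1}p$ type manipulations already exploited in the Remark after \eqref{eq:duetre} and in Proposition \ref{prop:perimetroKfinito}, as an integral of $(v-u)(p)$ against the "nonlocal divergence" of $\zeta$, namely against $F_\varepsilon(p)$ in the limit $\varepsilon\to0$; since $v-u$ is supported in $\Omega$ and $F_\varepsilon\to 0$ in $L^1(\G)$ by property (i), this integral vanishes. Concretely: split the domain $(\G\times\G)\setminus(\Omega^c\times\Omega^c)$ off the diagonal by removing $\{d(p,q)<\varepsilon\}$, on that region Fubini applies (the kernel is integrable there by \eqref{eq:stimaL1} together with $v-u$ bounded and supported in $\Omega$, and one controls the contribution of the strip $\{d(p,q)<\varepsilon\}$ using \eqref{eq:stimaL1} to let it vanish as $\varepsilon\to0$), antisymmetrize $\zeta$ using Remark \ref{rem:antisymmetric} (which does not change $\mathcal E_\zeta$ on pairs, but makes the bookkeeping of the two terms $\zeta(p,q)$ and $\zeta(q,p)$ transparent), and collect the $p$-integral to recognize $\int_\G (v-u)(p)\,F_\varepsilon(p)\,dp$ up to an error term that vanishes with $\varepsilon$. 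Passing to the limit and using $|v-u|\le 1$ gives $|\mathcal E_\zeta(v)-\mathcal E_\zeta(u)| \le \lim_{\varepsilon\to0}\|F_\varepsilon\|_{L^1(\G)} + (\text{strip error}) = 0$, which is \eqref{eq:keyidentity}, and hence $J_K(u;\Omega)\le \mathcal E_\zeta(v)\le J_K(v;\Omega)$.

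For the final assertion, suppose $\widetilde u\in\mathcal F$ satisfies $J_K(\widetilde u;\Omega)\le J_K(u;\Omega)$. Chaining the inequalities just proved with $v=\widetilde u$ gives
\[
J_K(\widetilde u;\Omega) \;\ge\; \mathcal E_\zeta(\widetilde u) \;=\; \mathcal E_\zeta(u) \;=\; J_K(u;\Omega) \;\ge\; J_K(\widetilde u;\Omega),
\]
so all inequalities are equalities; in particular $\mathcal E_\zeta(\widetilde u)=J_K(\widetilde u;\Omega)$, which forces $\zeta(p,q)(\widetilde u(q)-\widetilde u(p)) = |\widetilde u(q)-\widetilde u(p)|$ for a.e.\ $(p,q)$ in $(\G\times\G)\setminus(\Omega^c\times\Omega^c)$ with $\widetilde u(p)\ne\widetilde u(q)$ (otherwise the integrand would be strictly smaller on a set of positive measure). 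On $\Omega^c\times\Omega^c$ the condition \eqref{eq:identitacalibra} for $\widetilde u$ reads identically to that for $u$, since $\widetilde u=u=\chi_{E_0}$ there. Hence property (ii) holds for $\widetilde u$; property (i) is a condition on $\zeta$ alone and is unchanged. Therefore $\zeta$ is a calibration for $\widetilde u$.

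\emph{Main obstacle.} The delicate point is the rigorous justification of the Fubini/antisymmetrization step and the identification of the $p$-marginal with $F_\varepsilon$: one must excise a neighborhood of the diagonal to guarantee absolute integrability (the kernel $K$ is only assumed integrable against $\min\{1,d(0,\cdot)\}$, not against $1$), carry out the change of variables $\eta\mapsto \eta\xi$ as in \eqref{eq:perimetrovolume} to turn the double integral over $\G\times\G$ minus the diagonal strip into the form $\int_\G(v-u)(p)F_\varepsilon(p)\,dp$, and show the diagonal-strip contribution tends to $0$ as $\varepsilon\to0$ — for the latter the bound $|v(q)-v(p)|\le \chi_\Omega(p)+\chi_\Omega(q)$ together with $\int_{B(0,\varepsilon)}d(0,\xi)K(\xi)\,d\xi\to0$ (a consequence of \eqref{eq:stimaL1}) does the job. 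Everything else is a direct consequence of $|\zeta|\le1$ and Definition \ref{def:calibration}.
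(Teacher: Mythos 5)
Your proposal takes essentially the same route as the paper. Both start from the pointwise inequality $\zeta(x,y)(v(y)-v(x))\le|v(y)-v(x)|$, observe that property (ii) gives equality when $v=u$, and then show (after antisymmetrizing $\zeta$ via Remark~\ref{rem:antisymmetric} and using the symmetry of $K$ together with property (i)) that the linear "calibrated energy" is constant on $\mathcal F$. The only difference is packaging: the paper splits it as $a(v)-b_1(v)+b_0$ and proves $a(v)=b_1(v)$, while you work directly with $\mathcal E_\zeta(v)-\mathcal E_\zeta(u)=\tfrac12\lim_{\varepsilon\to0}\int_\G(v-u)F_\varepsilon$; since $a(v)-b_1(v)+b_0=\mathcal E_\zeta(v)$ for every $v\in\mathcal F$, these are algebraically identical statements. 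The argument for the final assertion (chain the inequalities to force all of them to be equalities, then use nonnegativity of $(|\widetilde u(y)-\widetilde u(x)|-\zeta(x,y)(\widetilde u(y)-\widetilde u(x)))K$ and the fact that $\widetilde u=u$ on $\Omega^c$) is also the same.

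One technical slip worth correcting: your stated way of killing the diagonal strip does not work. The bound $|v(q)-v(p)|\le\chi_\Omega(p)+\chi_\Omega(q)$ carries no factor of $d(p,q)$, so after the change of variables you are left with a multiple of $\int_{B(0,\varepsilon)}K(\xi)\,d\xi$, not of $\int_{B(0,\varepsilon)}d(0,\xi)K(\xi)\,d\xi$; since \eqref{eq:stimaL1} does not imply $K$ is integrable near the identity, the former quantity need not tend to $0$. The correct argument is the one implicit in the paper: reduce to $J_K(v;\Omega)<+\infty$ (otherwise the inequality is trivial), observe that with $w=v-u$ the integrand $|\zeta(p,q)(w(q)-w(p))|K(q^{-1}p)$ is dominated on $(\G\times\G)\setminus(\Omega^c\times\Omega^c)$ by $(|v(q)-v(p)|+|u(q)-u(p)|)K(q^{-1}p)$, which is integrable there because $J_K(u;\Omega)$ and $J_K(v;\Omega)$ are finite, and then invoke absolute continuity of the integral (or dominated convergence) to conclude that the strip $\{d(p,q)<\varepsilon\}$ contributes $o(1)$ as $\varepsilon\to0$. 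With that replacement, your proof is complete and matches the paper's.
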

\begin{proof}
	We can assume without loss of generality that $J_K(v;\Omega)<+\infty$ for every $v\in \mathcal F$. Since $|v(y)-v(x)|\geq \zeta(x,y)(v(y)-v(x))$ we can write for any $v\in \mathcal F$
	\[
	J_K(v;\Omega)\geq a(v)-b_1(v)+b_0,
	\]
	where $a,b_1$ and $b_0$ are respectively defined by
	\[ 
	\begin{aligned}
	a(v)&\coloneqq \frac12\int_\Omega\int_\Omega K(y^{-1}x)\zeta(x,y)(v(y)-v(x))\,dy\,dx,\\
	b_1(v)&\coloneqq\int_\Omega\int_{\Omega^c}K(y^{-1}x)\zeta(x,y)v(x)\,dy\,dx,	\\
	b_0&\coloneqq \int_\Omega\int_{\Omega^c}K(y^{-1}x)\zeta(x,y)\chi_{E_0}(y)\,dy\,dx.
	\end{aligned}
	\]
	By \eqref{eq:identitacalibra}, we notice that $J_K(u;\Omega)= a(u)-b_1(u)+b_0$. It is then enough to prove that, for every $v\in \mathcal F$, one has $a(v)=b_1(v)$. By Remark \ref{rem:antisymmetric}, we can assume that $\zeta$ is antisymmetric. Combining this with the fact that $K(\xi^{-1})=K(\xi)$, we easily get
	\begin{equation}\label{eq:a}
	a(v)=-\int_\Omega\int_\Omega K(y^{-1}x)\zeta(x,y)v(x)\;dydx.
	\end{equation}
	By \eqref{eq:limiteazero}, for almost every $x\in \Omega$, we have
	\[
	\begin{aligned}
	\lim_{r\to0}\int_{B(x,r)^c}&K(y^{-1}x)\zeta(x,y)\;dy\\ &=\lim_{r\to0}\int_{B(x,r)^c\cap \Omega}K(y^{-1}x)\zeta(x,y)\;dy+\int_{\Omega^c}K(y^{-1}x)\zeta(x,y)\;dydx=0.
	\end{aligned}
	\]
	Implementing this identity in \eqref{eq:a} and using the dominated convergence Theorem, we get 
	\[
	\begin{aligned}
	a(v)&=-\int_\Omega\int_\Omega K(y^{-1}x)\zeta(x,y)v(x)\,dy\,dx\\ &=-\lim_{r\to0}\int_\Omega\int_{B(x,r)^c\cap\Omega} K(y^{-1}x)\zeta(x,y)v(x)\,dy\,dx \\
	& =\int_\Omega\int_{\Omega^c} K(y^{-1}x)\zeta(x,y)v(x)\,dy\,dx=b_1(v).
	\end{aligned}
	\]
	We are left to prove that, if $\widetilde u\in \mathcal F$ is such that $J_K(\widetilde u;\Omega)\leq J_K(u;\Omega)$, then $\zeta$ is a calibration for $\widetilde u$. Since $u=\widetilde u$ on $\Omega^c$ we get 
	\begin{equation}\label{eq:identitautilde}
	\zeta(x,y)(\widetilde u(y)-\widetilde u(x))=|\widetilde u(y)-\widetilde u(x)|,
	\end{equation}
	for almost every $(x,y)\in \Omega^c\times\Omega^c$ satisfying $u(x)\neq u(y)$. Since $J_K(\widetilde u;\Omega)=b_0$, we also have that $J_K(\widetilde u;\Omega)=a(\widetilde u)-b_1(\widetilde u)+b_0$. This implies that
	\[
	\begin{aligned}
	\frac12&\int_\Omega\int_\Omega K(y^{-1}x)\left(|\widetilde u(y)-\widetilde u(x)|-\zeta(x,y)(\widetilde u(y)-\widetilde u(x))\right)\,dy\,dx \\+& \int_\Omega\int_{\Omega^c} K(y^{-1}x)\left(|\widetilde u(y)-\widetilde u(x)|-\zeta(x,y)(\widetilde u(y)-\widetilde u(x))\right)\,dy\,dx=0.
	\end{aligned}	
	\]
	Since both integrands are positive, we get that \eqref{eq:identitautilde} holds true for almost every $(x,y) \in \Omega \times \G$ with $\widetilde u(x)\neq \widetilde u(y)$. To get \eqref{eq:identitautilde} for almost every $(x,y)\in \Omega^c\times \Omega$ it is enough to use the antisymmetry of $\zeta$.
\end{proof}

We now notice that the functional $J_K(\cdot;\Omega)$ enjoys a coarea formula. Concerning the Euclidean case, we refer the reader to \cite[Theorem 2.93]{AFP} for the classical formula relating total variation and Euclidean perimeter, and to \cite{Visintin91}, where the author finds a class of functionals defined on $L^1(\Omega)$ for which a generalized coarea formula holds.

\begin{proposition}\label{prop:coarea}
	Let $\Omega\subseteq\G$ be an open set and let $u\colon\Omega\rightarrow [0,1]$ be a measurable function. Setting $E_t\coloneqq\{g\in \G: u(g)>t\}$ for any $t\in[0,1]$, it holds that
	\begin{equation}
	\label{eq:coareaformula}
	J_K(u;\Omega)=\int_0^1 P_K(E_t;\Omega)\,dt.
	\end{equation}
\end{proposition}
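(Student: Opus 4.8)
The plan is to reduce the statement to the classical (Euclidean-style) coarea formula for total variation applied layer by layer. The key observation is that for a characteristic function the quantity $|\chi_{E_t}(y)-\chi_{E_t}(x)|$ equals $\chi_{\{t : u(x)\le t< u(y)\}}(t)+\chi_{\{t: u(y)\le t<u(x)\}}(t)$ up to a null set of $t$'s, so that
\[
|u(y)-u(x)|=\int_0^1 |\chi_{E_t}(y)-\chi_{E_t}(x)|\,dt
\]
for every pair $(x,y)$. This is just the one-dimensional coarea/layer-cake identity $|b-a|=\int_0^1|\chi_{(t,1]}(b)-\chi_{(t,1]}(a)|\,dt$ for $a,b\in[0,1]$, and it requires no regularity of $u$ at all — only measurability and that $u$ takes values in $[0,1]$.

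First I would record this pointwise identity. Then I would plug it into the definition \eqref{eq:J1J2} of $J_K(u;\Omega)$, obtaining
\[
J_K(u;\Omega)=\frac12\int_\Omega\int_\Omega K(y^{-1}x)\Big(\int_0^1|\chi_{E_t}(y)-\chi_{E_t}(x)|\,dt\Big)dy\,dx+\int_\Omega\int_{\Omega^c}K(y^{-1}x)\Big(\int_0^1|\chi_{E_t}(y)-\chi_{E_t}(x)|\,dt\Big)dy\,dx.
\]
Since $K\ge0$ and the integrand $|\chi_{E_t}(y)-\chi_{E_t}(x)|$ is nonnegative and jointly measurable in $(x,y,t)$ (measurability of $(x,y,t)\mapsto\chi_{E_t}(x)$ follows because $\{(x,t):u(x)>t\}$ is measurable by Fubini/measurability of $u$), the Tonelli theorem allows me to exchange the $dt$ integral with the $dx\,dy$ integrals. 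This turns each double integral into $\int_0^1$ of the corresponding double integral for $\chi_{E_t}$, and recombining the two pieces gives exactly $\int_0^1 J_K(\chi_{E_t};\Omega)\,dt=\int_0^1 P_K(E_t;\Omega)\,dt$, which is \eqref{eq:coareaformula}.

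The only genuinely delicate point is justifying the application of Tonelli, i.e. checking the joint measurability of $(x,y,t)\mapsto K(y^{-1}x)|\chi_{E_t}(y)-\chi_{E_t}(x)|$ on $\Omega\times\Omega\times(0,1)$ (and on $\Omega\times\Omega^c\times(0,1)$); this is where I expect the main, though routine, obstacle to lie. It follows from the measurability of $u$ together with the fact that the group operations and $K$ are Borel, so that $\{(x,t)\in\G\times(0,1): u(x)>t\}$ is a measurable set and hence $(x,y,t)\mapsto \chi_{E_t}(x)$ and $(x,y,t)\mapsto\chi_{E_t}(y)$ are measurable. Once measurability and nonnegativity are in hand, Tonelli applies with no integrability hypothesis, so the identity \eqref{eq:coareaformula} holds in $[0,+\infty]$ regardless of whether $J_K(u;\Omega)$ is finite; in particular there is nothing to assume beyond what is stated. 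I would also remark that the same computation, restricted to the local part, yields the companion identity $J^1_K(u;\Omega)=\int_0^1 J^1_K(E_t;\Omega)\,dt$, which is occasionally convenient.
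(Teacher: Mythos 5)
Your proof is correct and follows essentially the same route as the paper: the pointwise layer-cake identity $|u(x)-u(y)|=\int_0^1|\chi_{E_t}(x)-\chi_{E_t}(y)|\,dt$ followed by Tonelli's theorem to exchange the order of integration. The extra care you take with joint measurability is sound but routine; the paper leaves it implicit.
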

\begin{proof}
	The proof is analogous to the Euclidean case, see \cite[Lemma 6.2.]{CinSerVal}. Fix $x,y\in \Omega$ with $x\neq y$ and assume without loss of generality that $u(x)>u(y)$. Then $|\chi_{E_t}(x)-\chi_{E_t}(y)|=1$ for any $t\in[u(y),u(x)]$ and $|\chi_{E_t}(x)-\chi_{E_t}(y)|=0$ for every $t\in [0,1]\setminus [u(y),u(x)]$. Therefore, for any $x,y\in \Omega$, it holds that
	\[
	|u(x)-u(y)|=\left|\int_{u(y)}^{u(x)}|\chi_{E_t}(x)-\chi_{E_t}(y)|\,dt\right|=\int_0^1 |\chi_{E_t}(x)-\chi_{E_t}(y)|\,dt.
	\]
	Now, using Tonelli's Theorem, we have that
	\begin{equation*}
	\begin{aligned}
	J_K(u;\Omega)&=\int_0^1\left[\frac{1}{2}\int_\Omega\int_\Omega|\chi_{E_t}(x)-\chi_{E_t}(y)|K(y^{-1}x)\,dx\,dy\right]dt \\ &\hphantom{=}+\int_0^1\left[\int_\Omega\int_{\Omega^c}|\chi_{E_t}(x)-\chi_{E_t}(y)|K(y^{-1}x)\,dx\,dy\right]dt=\int_0^1 P_K(E_t;\Omega)\,dt. \qedhere
	%&=\int_0^1\left[\int_{E_t\cap\Omega}\int_{E_t^c\cap\Omega}K(y^{-1}x)dxdy+\int_{E_t\cap\Omega}\int_{E_t^c\cap\Omega^c}K(y^{-1}x)dxdy\right.\\
	%&\hphantom{=\int_0^1\int_{E_t\cap\Omega}\int_{E_t^c\cap\Omega}K(y^{-1}x)dxdy+}\left.+\int_{E_t^c\cap\Omega}\int_{E_t^c\cap\Omega^c}K(y^{-1}x)dxdy\right]dt %\\
	%&=\int_0^1\left[L_K(E_t\cap\Omega,E_t^c\cap\Omega)+L_K(E_t\cap\Omega,E_t^c\cap\Omega^c)+L_K(E_t^c\cap\Omega,E_t^c\cap\Omega^c)\right]dt \\
	%&=\int_0^1 P_K(E_t;\Omega)dt.
	\end{aligned}
	\end{equation*}
\end{proof}

As an immediate consequence of Corollary \ref{cor:coarea} below we deduce that, if the infimum of for $J_K(\cdot;\Omega)$ 
with outer datum $E_0$ is achieved, there is always a minimizer which is the characteristic function of a measurable set.
\begin{corollary}\label{cor:coarea}
	Let $\Omega\subset\G$ be an open set and let $v\colon\G\to [0,1]$ be a measurable function. Then, there exists a measurable set $F$ such that
	\[
	J_K(F;\Omega)\le J_K(v;\Omega).
	\]
\end{corollary}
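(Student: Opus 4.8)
The plan is to use the coarea formula from Proposition \ref{prop:coarea} together with a simple averaging (mean-value) argument. Given $v\colon\G\to[0,1]$ measurable, set $E_t\coloneqq\{g\in\G: v(g)>t\}$ for $t\in[0,1]$. By \eqref{eq:coareaformula} we have $J_K(v;\Omega)=\int_0^1 P_K(E_t;\Omega)\,dt=\int_0^1 J_K(\chi_{E_t};\Omega)\,dt$, so the average over $t\in[0,1]$ of the quantities $J_K(\chi_{E_t};\Omega)$ equals $J_K(v;\Omega)$. Hence there must exist at least one value $t_0\in[0,1]$ (in fact a set of positive measure of such $t_0$, if $J_K(v;\Omega)<+\infty$) for which $J_K(\chi_{E_{t_0}};\Omega)\le J_K(v;\Omega)$; otherwise the integrand would be strictly larger than $J_K(v;\Omega)$ for a.e.\ $t$, forcing the integral to be strictly larger, a contradiction. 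Taking $F\coloneqq E_{t_0}$ then gives $J_K(F;\Omega)=P_K(F;\Omega)\le J_K(v;\Omega)$, which is the claim.

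The one point to handle carefully is the degenerate case $J_K(v;\Omega)=+\infty$: then the inequality $J_K(F;\Omega)\le J_K(v;\Omega)$ is trivially satisfied by any measurable $F$ (for instance $F=\emptyset$), so we may assume $J_K(v;\Omega)<+\infty$ and run the averaging argument above. There is no real obstacle here — the coarea identity is exactly the tool that reduces the statement to the elementary fact that a function cannot be everywhere (a.e.) strictly above its own average. In particular, I do not even need to invoke measurability of $t\mapsto J_K(\chi_{E_t};\Omega)$ beyond what is implicitly guaranteed by Tonelli's theorem in the proof of Proposition \ref{prop:coarea}, which already exhibits this map as an integral of a nonnegative measurable function.

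Concretely, I would write: \emph{Proof.} If $J_K(v;\Omega)=+\infty$ the statement is trivial, so assume $J_K(v;\Omega)<+\infty$. With $E_t=\{v>t\}$, Proposition \ref{prop:coarea} gives $\int_0^1 P_K(E_t;\Omega)\,dt=J_K(v;\Omega)<+\infty$. If we had $P_K(E_t;\Omega)>J_K(v;\Omega)$ for a.e.\ $t\in[0,1]$, integrating over $[0,1]$ would yield $J_K(v;\Omega)>J_K(v;\Omega)$, a contradiction. Therefore the set $\{t\in[0,1]: P_K(E_t;\Omega)\le J_K(v;\Omega)\}$ has positive Lebesgue measure; picking any $t_0$ in it and setting $F\coloneqq E_{t_0}$ we obtain $J_K(F;\Omega)=P_K(F;\Omega)\le J_K(v;\Omega)$. $\qed$
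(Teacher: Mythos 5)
Your proof is correct and matches the paper's own argument: both apply the coarea formula of Proposition \ref{prop:coarea} and then use the elementary averaging fact that the integrand $t\mapsto P_K(E_t;\Omega)$ cannot exceed its mean $J_K(v;\Omega)$ for a.e.\ $t$, so some level set $E_{t_0}$ works. Your explicit handling of the degenerate case $J_K(v;\Omega)=+\infty$ is a small, harmless addition not spelled out in the paper.
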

\begin{proof}
	Denote by $E_t\coloneqq \{g\in \G:v(g)>t\}$. By the coarea formula \eqref{eq:coareaformula}, there exists $t^\star\in[0,1]$ such that $J_K(E_{t^\star};\Omega)\leq J_K(v;\Omega)$, otherwise the equality in \eqref{eq:coareaformula} would be contradicted. In particular, setting $F\coloneqq E_{t^\star}$, the proof is complete.
\end{proof}
In Proposition \ref{prop:findcalibration} below we show that halfspaces in Carnot group admit a calibration. In Theorem \ref{th:localminimalitycalibration}, we show that halfspaces are unique local minimizers for $J_K(\cdot;\Omega)$ when subject to their own outer datum and whenever $\Omega$ is a ball centered at the origin. 
\begin{proposition}\label{prop:findcalibration}
	For any $\nu\in\g_1\setminus\{0\}$, the map $\zeta_\nu\colon\G\times\G\to[0,1]$ defined by 
	\[
	\zeta_\nu(x,y)\coloneqq\mathrm{sign}\left(\langle \pi_1 \log(x^{-1}y),\nu \rangle \right),
	\]
	is a calibration for $\chi_{H_\nu}$.
\end{proposition}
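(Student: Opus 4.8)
The plan is to verify the two conditions of Definition~\ref{def:calibration} directly for $u=\chi_{H_\nu}$ and $\zeta=\zeta_\nu$, the whole argument resting on the elementary identity
\[
\pi_1\log(x^{-1}y)=\pi_1\log y-\pi_1\log x,\qquad x,y\in\G,
\]
which I would establish first. It follows from the Baker--Campbell--Hausdorff formula together with $\log(x^{-1})=-\log x$: apart from the linear term, every summand in the BCH series is an iterated commutator and therefore lies in $\g_2\oplus\cdots\oplus\g_s$, which is annihilated by $\pi_1$. An immediate consequence is that $\zeta_\nu$ is antisymmetric: since $\pi_1\log(y^{-1}x)=-\pi_1\log(x^{-1}y)$, we get $\zeta_\nu(y,x)=\mathrm{sign}\big(-\langle\pi_1\log(x^{-1}y),\nu\rangle\big)=-\zeta_\nu(x,y)$.

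For condition~(ii), take $(p,q)$ with $\chi_{H_\nu}(p)\neq\chi_{H_\nu}(q)$. If $q\in H_\nu$ and $p\notin H_\nu$, then $\langle\pi_1\log q,\nu\rangle\geq 0$ and $\langle\pi_1\log p,\nu\rangle<0$, so by the identity above $\langle\pi_1\log(p^{-1}q),\nu\rangle>0$, hence $\zeta_\nu(p,q)=1=\chi_{H_\nu}(q)-\chi_{H_\nu}(p)=|\chi_{H_\nu}(q)-\chi_{H_\nu}(p)|$; the case $p\in H_\nu$, $q\notin H_\nu$ is symmetric and yields $\zeta_\nu(p,q)=-1$, so that $\zeta_\nu(p,q)\big(\chi_{H_\nu}(q)-\chi_{H_\nu}(p)\big)=1$ again. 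Thus \eqref{eq:identitacalibra} holds, in fact for \emph{every} such pair, not merely almost everywhere.

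For condition~(i), I would first use the antisymmetry of $\zeta_\nu$ to rewrite $F_\varepsilon(p)=-2\int_{\G\setminus B(p,\varepsilon)}K(y^{-1}p)\,\zeta_\nu(p,y)\,dy$, an absolutely convergent integral because $|\zeta_\nu|\leq1$ and, by \eqref{eq:stimaL1}, $K\in L^1(\G\setminus B(0,\varepsilon))$. The change of variable $y=p\cdot z$ (left invariance of $\mathscr H^Q$), together with $K(z^{-1})=K(z)$ and $\|z^{-1}\|=\|z\|$, turns this into
\[
F_\varepsilon(p)=-2\int_{\G\setminus B(0,\varepsilon)}K(z)\,\mathrm{sign}\big(\langle\pi_1\log z,\nu\rangle\big)\,dz,
\]
which is independent of $p$. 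Finally, the inversion $z\mapsto z^{-1}$ preserves $\mathscr H^Q$ (in exponential coordinates it is $X\mapsto-X$) and the set $\G\setminus B(0,\varepsilon)$, while it changes the sign of the integrand; hence the integral vanishes, $F_\varepsilon\equiv0$, and \eqref{eq:limiteazero} holds trivially. The only step that uses the group structure in an essential way is the projected BCH cancellation stated at the outset, and it is the point on which everything hinges; once it is granted, the rest is a sequence of changes of variables and symmetry observations.
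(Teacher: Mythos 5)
Your argument is correct and follows the same route as the paper: the verification of (ii) is the pointwise inequality coming from the linearity of $\pi_1\log$ on products (the projected BCH cancellation you make explicit, which the paper uses tacitly), and the verification of (i) is the same combination of left translation, $K(z^{-1})=K(z)$, and the fact that inversion preserves Haar measure while exchanging $H_\nu$ and $H_\nu^c$, yielding $F_\varepsilon\equiv 0$. The only difference is presentational — you state the BCH observation up front and derive antisymmetry of $\zeta_\nu$ from it before using it — but the substance is identical.
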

\begin{proof}
	Denote for shortness $H=H_\nu$ and $\zeta=\zeta_\nu$. Let us first prove property $(ii)$ of Definition \ref{def:calibration}, namely that for almost every $(x,y)\in \G\times\G$ with $\chi_H(x)\neq \chi_H(y)$ one has
	\[
	\zeta(x,y)(\chi_H(y)-\chi_H(x))=|\chi_H(y)-\chi_H(x)|.
	\]
	It is not restrictive to assume that $x\in H$ and $y\in H^c$. Then
	\[
	\langle\pi_1\log (x^{-1}y), \nu\rangle=-\langle\pi_1\log x, \nu\rangle +\langle\pi_1\log y, \nu\rangle<0.
	\]
	Concerning property $(i)$ of Definition \ref{def:calibration} we observe that for every $r>0$ and every $x\in \G$ one has
	\[
	\begin{aligned}
	\int_{\G\setminus B(x,r)}&K(y^{-1}x)\left(\mathrm{sign}(\langle\pi_1\log( x^{-1}y), \nu\rangle )-\mathrm{sign}(\langle\pi_1\log (y^{-1}x), \nu\rangle)\right)dy\\
	&=2\int_{\G\setminus B(x,r)\cap xH} K(y^{-1}x)\;dy-2\int_{\G\setminus B(x,r)\cap xH^c} K(y^{-1}x)\;dy\\
	&=2\int_{\G\setminus B(0,r)\cap H} K(z)\;dz-2\int_{\G\setminus B(0,r)\cap H^c} K(z)\;dz=0.
	\end{aligned}
	\]
	The last identity comes from the fact that $\mathscr H^Q(\{x\in \G:\langle \pi_1\log x, \nu\rangle=0\})=0$, $K(x^{-1})=K(x)$ and the inversion $\xi\mapsto \xi^{-1}$ preserves the volume and maps $H$ onto $H^c$ (up to sets of measure zero).
\end{proof}

\begin{theorem}\label{th:localminimalitycalibration}
	Let $H$ be a vertical halfspace and denote by $B\coloneqq B(0,1)$. Then
$P_K(H;B)<+\infty$ and 
	\[
	P_K(H;B)\leq J_K(v;B),
	\]
	for every measurable $v\colon\G\to [0,1]$ such that $v=\chi_H$ almost everywhere on $B^c$. Moreover, if $u\colon\G\to [0,1]$ is such that $u=\chi_H$ almost everywhere on $B^c$ and $J_K(u;B)\leq J_K(\chi_H;B)$, then $u=\chi_H$ almost everywhere on $\G$.
\end{theorem}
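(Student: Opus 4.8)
The statement combines three assertions: finiteness of $P_K(H;B)$, the minimality inequality, and uniqueness. The first is immediate from Proposition~\ref{prop:perimetroKfinito} since a vertical halfspace has locally finite perimeter, so that $V(H,H^c)$ is finite when restricted to the bounded set $B$ (more precisely, one applies the estimate to $E=H\cap B'$, $F=H^c\cap B'$ for a slightly larger ball $B'$, or splits $P_K(H;B)$ into its three pieces and bounds each using that $H\cap B$ has finite perimeter and finite measure). The heart of the matter is the minimality and uniqueness, and here the strategy is simply to invoke the machinery already set up: by Proposition~\ref{prop:findcalibration}, the function $\zeta_\nu$ is a calibration for $u=\chi_H$; then Theorem~\ref{th:calibrationimpliesminimizer}, applied with $\Omega=B$ and outer datum $E_0=H$, gives $J_K(\chi_H;B)\le J_K(v;B)$ for every $v\in\mathcal F$, i.e.\ for every measurable $v\colon\G\to[0,1]$ agreeing with $\chi_H$ a.e.\ on $B^c$. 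Since $P_K(H;B)=J_K(\chi_H;B)$ by definition, this is exactly the desired inequality.

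For the uniqueness part, suppose $u\colon\G\to[0,1]$ equals $\chi_H$ a.e.\ on $B^c$ and satisfies $J_K(u;B)\le J_K(\chi_H;B)$. The last assertion of Theorem~\ref{th:calibrationimpliesminimizer} then tells us that $\zeta_\nu$ is also a calibration for $u$; in particular property (ii) of Definition~\ref{def:calibration} holds for $u$, meaning that for a.e.\ $(x,y)\in\G\times\G$ with $u(x)\ne u(y)$ one has $\zeta_\nu(x,y)(u(y)-u(x))=|u(y)-u(x)|$. The plan is to combine this identity with the explicit antisymmetric form of $\zeta_\nu$ to force $u$ to be monotone along the direction $\nu$, and then deduce $u=\chi_H$. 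Concretely: $\zeta_\nu(x,y)=\mathrm{sign}\langle\pi_1\log(x^{-1}y),\nu\rangle$, so the calibration identity says that whenever $\langle\pi_1\log(x^{-1}y),\nu\rangle>0$ (i.e.\ "$y$ is above $x$" in the $\nu$-direction) and $u(x)\ne u(y)$, then $u(y)>u(x)$. Thus $u$, viewed along the foliation by level sets of $\xi\mapsto\langle\pi_1\log\xi,\nu\rangle$, is nondecreasing in the $\nu$-direction in an essential sense. Because $u$ is prescribed to be $0$ on $H^c\setminus B$ and $1$ on $H\setminus B$ (away from the critical hyperplane, which has measure zero), this monotonicity pins down $u$ to be $0$ a.e.\ on $\{\langle\pi_1\log x,\nu\rangle<0\}$ and $1$ a.e.\ on $\{\langle\pi_1\log x,\nu\rangle>0\}$, hence $u=\chi_H$ a.e.\ on $\G$.

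Let me describe the monotonicity argument a bit more carefully, since that is where the work is. Fix the decomposition of $\G$ induced by the map $L\colon\G\to\R$, $L(x)=\langle\pi_1\log x,\nu\rangle$. For the calibration identity to give information one needs to integrate out the "transverse" variable. One clean way: choose any Lebesgue point $x_0$ with $L(x_0)<0$; by the prescribed outer datum and the fact that $\{L=0\}$ is null, for a positive-measure set of points $y$ with $L(y)>0$ and $y\notin B$ one has $u(y)=1$ and $L(x_0^{-1}y)>0$ — wait, one must be careful, because $L(x^{-1}y)\ne L(y)-L(x)$ in general (only the horizontal components of $\log$ are not additive under the group law). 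In fact $\pi_1\log(x^{-1}y)=\pi_1\log y-\pi_1\log x$ by the Baker--Campbell--Hausdorff formula (commutators land in $\g_2\oplus\cdots$, which is killed by $\pi_1$), so indeed $L(x^{-1}y)=L(y)-L(x)$ exactly, as already used in the proof of Proposition~\ref{prop:findcalibration}. Good — so $L(x^{-1}y)>0\iff L(y)>L(x)$. Therefore the calibration identity for $u$ reads: for a.e.\ pair $(x,y)$ with $L(y)>L(x)$ and $u(x)\ne u(y)$, one has $u(y)>u(x)$; equivalently $u(y)\ge u(x)$ for a.e.\ such pair. This is precisely essential monotonicity of $u$ along increasing $L$. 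Combined with $u=1$ a.e.\ on $\{L>0\}\setminus B$ and $u=0$ a.e.\ on $\{L<0\}\setminus B$: if $u$ were not $\equiv 1$ a.e.\ on $\{L>0\}$, there would be a positive-measure set of points $x$ with $L(x)>0$ and $u(x)<1$, and since $\{L>0\}\setminus B$ has points of every sufficiently large $L$-level with $u=1$, one could find (again using Fubini in the $L$-variable and the fact that the set $\{(x,y):L(y)>L(x)\}$ has full measure in the relevant region) a positive-measure set of violating pairs, a contradiction. The symmetric argument handles $\{L<0\}$. Hence $u=\chi_H$ a.e.

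**Main obstacle.** Everything reduces to earlier results except the final monotonicity-to-rigidity step, and that is the one place demanding genuine (if routine) measure-theoretic care: one must turn the "for a.e.\ pair" monotonicity into a pointwise-a.e.\ statement about $u$ on the two halfspaces, properly handling the null hyperplane $\{L=0\}$, the fact that the constraint $u=\chi_H$ only holds on $B^c$, and the correct use of Fubini along the slicing $L=\mathrm{const}$. The identity $L(x^{-1}y)=L(y)-L(x)$ (via BCH and $\pi_1$) is the small structural fact that makes the slicing work and should be stated explicitly.
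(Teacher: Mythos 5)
The overall plan --- finiteness via Proposition~\ref{prop:perimetroKfinito}, minimality by combining Proposition~\ref{prop:findcalibration} with Theorem~\ref{th:calibrationimpliesminimizer}, and uniqueness from the fact that the calibration identity forces essential monotonicity of $u$ along $L(\xi)=\langle\pi_1\log\xi,\nu\rangle$ --- matches the paper's. The one genuine divergence is the last step: the paper passes to the superlevel sets $E_t=\{u>t\}$ and applies Dedekind's theorem to produce, for each $t\in(0,1)$, a cut $\lambda_t$ with $E_t=\{L\geq\lambda_t\}$ up to null sets, which the boundary condition then forces to equal $0$; you instead propose a direct Fubini-slicing argument. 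That route is viable in principle, but your sketch has the comparison running the wrong way. From $L(y)>L(x)\Rightarrow u(y)\geq u(x)$ for a.e.\ $(x,y)$, a positive-measure set of bad points $x$ with $L(x)>0$ and $u(x)<1$ is contradicted by pairing each such $x$ with anchor points $y\in B^c$ satisfying $0\leq L(y)<L(x)$ and $u(y)=1$ (a set of positive measure since the slab $\{0\leq L<L(x)\}\setminus B$ is nontrivial): then $L(x)>L(y)$ while $u(x)<u(y)$, a violation. Your phrase about anchors at ``every sufficiently large $L$-level with $u=1$'' goes the opposite way and produces pairs with $L(y)>L(x)$ and $u(y)\geq u(x)$, which are perfectly consistent with monotonicity and give no contradiction. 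The fix is small but necessary; once made, the Fubini upgrade from a.e.-pairwise monotonicity to the set statement (which you correctly flag as the delicate point) goes through. Your observation that $\pi_1\log(x^{-1}y)=\pi_1\log y-\pi_1\log x$ via BCH is correct and is indeed the structural fact that makes the $L$-slicing behave additively under left translation.
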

\begin{proof}
	By definition of $P_K$ we can write
	\[
	P_K(H;B)=L_K(H\cap B, H^c\cap B^c)+ L_K(H^c\cap B, H\cap B^c)+L_K(H^c\cap B,H\cap B)<+\infty,
	\]
	since each term on the right-hand side is finite because of Proposition \ref{prop:perimetroKfinito}.
	
	By Proposition \ref{prop:findcalibration} and Theorem \ref{th:calibrationimpliesminimizer} we only have to show that minimizers are unique (up to sets of measure zero). Let $\nu \in \g_1\setminus\{0\}$ be such that $H=H_\nu$ and let $u\colon\G\to [0,1]$ be such that $u=\chi_H$ almost everywhere on $B^c$ and $J_K(u;B)\leq J_K(\chi_H;B)$. Consider the map $\zeta(x,y)=\mathrm{sign}(\langle \pi_1\log (x^{-1}y),\nu\rangle)$ which is a calibration of $\chi_H$. By Theorem \ref{th:calibrationimpliesminimizer}, $\zeta$ is also a calibration for $u$ and therefore
	\[
    \mathrm{sign}(\langle \pi_1\log (x^{-1}y),\nu\rangle)(u(y)-u(x))=|u(y)-u(x)|, \quad \text{for a.e.\ $(x,y)\in \G\times\G$.}
	\]
	 As a consequence, the implication
	 \[
	 \langle \pi_1\log (x^{-1}y),\nu\rangle>0 \Rightarrow u(y)\geq u(x)
	 \]
	holds for almost every $(x,y)\in\G\times\G$.
	For every $t\in (0,1)$, define the set $E_t\coloneqq \{\xi\in\G: u(\xi)>t\}$. For almost every $(x,y)\in E_t\times E_t^c$ one has $u(x)>u(y)$ and therefore $\langle \pi_1\log x,\nu\rangle\geq\langle \pi_1\log y,\nu\rangle$. By Dedekind's Theorem, and up to sets of measure zero, for every $t\in (0,1)$, there exists $\lambda_t\in \R$ such that $E_t\subseteq \{\xi\in\G:\langle \pi_1\log \xi,\nu\rangle\geq \lambda_t\}$ and $E_t^c\subseteq\{\xi\in\G:\langle \pi_1\log \xi,\nu\rangle\leq \lambda_t\}$. \\
	This implies that for all $t\in (0,1)$ one has
	\[
	\mathscr H^Q(E_t\triangle\{\xi\in\G:\langle \pi_1\log \xi,\nu\rangle\geq \lambda_t\})=0.
	\]
	Combining this with the fact that $u=\chi_H$ almost everywhere on $B^c$, we get that $\lambda_t=0$ for every $t\in (0,1)$, and therefore
	\begin{equation}\label{eq:misurazero}
	\mathscr H^Q(E_t\triangle H)=0, \qquad\forall t\in (0,1).
	\end{equation}
	 Consider now a sequence $(t_j)$ in $(0,1)$ that converges to $0$ as $j\to+\infty$. Since $u$ has values in $[0,1]$, we get 
	\[
	\{\xi\in\G:u(\xi)\leq 0\}=\{\xi\in\G:u(\xi)= 0\}=\bigcap_{j\in\mathbb N}E_{t_j}^c,
	\]
	and similarly
	\[
	\{\xi\in\G:u(\xi)=1\}= \bigcap_{j\in \mathbb N}E_{1-t_j}.
	\]
	Combining this fact with \eqref{eq:misurazero}, we complete the proof by observing that the identities $\mathscr H^Q(\{\xi\in \G:u(\xi)=0\}\triangle H^c)=0$ and $\mathscr H^Q(\{\xi\in \G:u(\xi)=1\}\triangle H)=0$ hold. 
\end{proof}	
Now, we show another useful approach for the analysis of the minimizers of the functional in \eqref{eq:J1J2}. To do this, following \cite{Cabre} we introduce the notion of nonlocal mean curvature and of calibrating functional; these tools allow us to prove Theorem \ref{th:cabre}. We more precisely clarify the relation between Theorem \ref{th:calibrationimpliesminimizer} and Theorem \ref{th:cabre} in Remark \ref{rem:similarities}.

\begin{definition}
	Let $E$ be a set of finite perimeter in $\G$. The \emph{nonlocal mean curvature} is defined as
	\begin{equation}
	H_K[E](x)\coloneqq\lim_{\varepsilon\to 0}\int_{\G\setminus B_\varepsilon(x)}(\chi_{E^c}(y)-\chi_{E}(y))K(y^{-1}x)\,dy.
	\end{equation}
	More generally, for every measurable map $\phi\colon\G\to \R$ we set
	\[
	H_K(\phi)(x)\coloneqq\lim_{\varepsilon\to 0}\int_{\G\setminus B_\varepsilon(x)}\mathrm{sign}(\phi(x)-\phi(y))K(y^{-1}x)\,dy.
	\]
	Notice that $H_K[\phi](x)=H_K(\{\phi>\phi(x)\})(x)$.
\end{definition}
\begin{definition}
	\label{def:foliation}
	Let $\Omega\subset\G$ be a bounded open set and let $E$ be a measurable set. We say that $\Omega$ is foliated by sub- and super- solutions adapted to $E$ whenever there exists a measurable function $\phi_E\colon\G\rightarrow \R$ such that
	\begin{enumerate}
		\item[(i)] $E=\{\phi_E(x)>0\}$ up to $\mathscr H^Q$-negligible sets;
		\item[(ii)] The sequence of functions $F_h(x)\coloneqq\int_{\G\setminus B(x, 1/h)}\mathrm{sign}(\phi_E(x)-\phi_E(y))K(y^{-1}x)\,dy$ converges in $L^1(\Omega)$ to $H_K(E)$ as $h\to \infty$.
		\item[(iii)] $H_K[\phi_E](x)\le 0$ for a.e.\ $x\in\Omega\cap E$ and $H_K[\phi_E](x)\ge 0$ for a.e.\ $x\in\Omega\setminus E$.
	\end{enumerate} 
	Notice that the integrals defined in (ii) are finite thanks to the assumptions on the kernel.
\end{definition}  

\begin{definition}
	Let $\Omega\subseteq \G$ be a bounded open set and let $E\subseteq \G$ be measurable. Assume that $\Omega$ is foliated by sub- and super- solutions adapted to $E$ and let $\phi_E$ be the measurable function provided by Definition \ref{def:foliation}. Then, for every measurable set $F$ such that $F\setminus\Omega=E\setminus\Omega$, we define the \emph{calibrating functional} as
	\begin{equation}
	\label{eq:calibrationcabre}
	\mathcal{C}_\Omega(F)\coloneqq\int_{(\G\times \G)\setminus(\Omega^c\times \Omega^c)}\mathrm{sign}(\phi_E(x)-\phi_E(y))(\chi_F(x)-\chi_F(y))K(y^{-1}x)\,dx\,dy.
	\end{equation}
\end{definition}

\begin{remark}\label{rmk:duecinque}
	Let $\Omega\subseteq \G$ is a bounded open set and let $E\subseteq \G$ be measurable. Assume that $\Omega$ is foliated by sub- and super- solutions adapted to $E$. Then, as a consequence of (i) of Definition \ref{def:foliation}, it  immediately follows that  $P_K(E;\Omega)=\mathcal{C}_\Omega(E)$ and
	\begin{equation}
	P_K(F;\Omega) \ge \mathcal{C}_{\Omega}(F)
	\end{equation}
	for every measurable set $F\subseteq \G$ with $F\setminus \Omega= E\setminus \Omega$.
\end{remark}
\begin{proposition}
	\label{prop:propcabre}
	%Let $K$ satisfy \eqref{eq:nonnegativekernel}, \eqref{eq:evenkernel} and \eqref{eq:stimaL1}, 
	Let $\Omega\subset\G$ be a bounded open set with $P_K(\Omega;\G)<\infty$ and let $E\subset\G$ be a measurable set. Moreover, assume there exists a measurable function $\phi_E\colon\G\rightarrow\R$ satisfying (i) and (ii) of Definition \ref{def:foliation}.
	Then, for every measurable set $F$ such that $F\setminus\Omega=E\setminus\Omega$ and $P_K(F;\G)<\infty$, we have
	\begin{equation}
	\label{eq:calibrationandmeancurv}
	\mathcal{C}_\Omega(F)=2\int_{F\cap\Omega} H_K(\phi_E)(x)\,dx+2\int_{E\setminus\Omega}\int_{\Omega}\mathrm{sign}(\phi_E(x)-\phi_E(y))K(y^{-1}x)\, dx\,dy.
	\end{equation}
\end{proposition}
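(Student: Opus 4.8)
The plan is to compute $\mathcal{C}_\Omega(F)$ directly by a Fubini and symmetrisation argument; the only delicate point is the singularity of $K$ at the identity, which prevents separating the two summands of $\chi_F(x)-\chi_F(y)$ over $\Omega\times\Omega$ before regularising. Throughout set $S(x,y)\coloneqq\mathrm{sign}(\phi_E(x)-\phi_E(y))$ and record the elementary identities $S(y,x)=-S(x,y)$, $K(x^{-1}y)=K(y^{-1}x)$ (by \eqref{eq:evenkernel}) and $\|\xi^{-1}\|=\|\xi\|$ (by left invariance of $d$); in particular the integrand $S(x,y)(\chi_F(x)-\chi_F(y))K(y^{-1}x)$ of $\mathcal{C}_\Omega(F)$ is invariant under the involution $(x,y)\mapsto(y,x)$, and, since $F\setminus\Omega=E\setminus\Omega$, one may replace $\chi_F$ by $\chi_E=\chi_{E\setminus\Omega}$ on $\Omega^c$.

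First I would verify that $\mathcal{C}_\Omega(F)$ converges absolutely: over $\Omega\times\Omega$ the integrand is dominated by $|\chi_F(x)-\chi_F(y)|K(y^{-1}x)$, whose double integral is $J^1_K(F;\Omega)=2L_K(F^c\cap\Omega,F\cap\Omega)\le 2L_K(F,F^c)=2P_K(F;\G)<+\infty$; over $\Omega\times\Omega^c$ and over $\Omega^c\times\Omega$ it is dominated by $K(y^{-1}x)$, whose double integral equals $L_K(\Omega,\Omega^c)=P_K(\Omega;\G)<+\infty$ (both being equal, $L_K$ being symmetric by \eqref{eq:evenkernel}). Hence, by dominated convergence, $\mathcal{C}_\Omega(F)=\lim_{h\to\infty}\mathcal{C}^h_\Omega(F)$, where $\mathcal{C}^h_\Omega(F)$ denotes the same integral restricted to $\{\|y^{-1}x\|\ge 1/h\}$; on this restricted region each of $S(x,y)\chi_F(x)K(y^{-1}x)$ and $S(x,y)\chi_F(y)K(y^{-1}x)$ is separately integrable over $(\G\times\G)\setminus(\Omega^c\times\Omega^c)$, since $\int_{\G\setminus B(x,1/h)}K(y^{-1}x)\,dy=\int_{\G\setminus B(0,1/h)}K<+\infty$ by \eqref{eq:stimaL1} and $\Omega$ is bounded, so from now on Fubini and changes of variables are licit.

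Next I would split $\mathcal{C}^h_\Omega(F)$ into its $\chi_F(x)$-- and $\chi_F(y)$--parts. In the first, integrating in $y$ first, the inner integral over $\G\setminus B(x,1/h)$ equals $F_h(x)$ when $x\in\Omega$, while for $x\in\Omega^c$ only $y\in\Omega$ contributes and one replaces $\chi_F(x)$ by $\chi_{E\setminus\Omega}(x)$; in the second, integrating in $x$ first and using $S(y,x)K(x^{-1}y)=-S(x,y)K(y^{-1}x)$, the inner integral over $\G\setminus B(y,1/h)$ equals $-F_h(y)$ when $y\in\Omega$, while for $y\in\Omega^c$ only $x\in\Omega$ contributes and $\chi_F(y)=\chi_{E\setminus\Omega}(y)$. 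After the change of variables $x\leftrightarrow y$ the two residual terms (the interactions across $\partial\Omega$) are seen to be opposite, so altogether
\[
\mathcal{C}^h_\Omega(F)=2\int_{F\cap\Omega}\Big(\int_{\G\setminus B(x,1/h)}S(x,y)K(y^{-1}x)\,dy\Big)dx+2\int_{E\setminus\Omega}\Big(\int_{\Omega\setminus B(x,1/h)}S(x,y)K(y^{-1}x)\,dy\Big)dx,
\]
and the first term is exactly $2\int_{F\cap\Omega}F_h(x)\,dx$ with $F_h$ as in Definition \ref{def:foliation}(ii).

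Finally I would let $h\to\infty$. By hypothesis (ii) the sequence $F_h$ converges in $L^1(\Omega)$; its limit must be $H_K(\phi_E)$, since $F_h(x)\to H_K(\phi_E)(x)$ at every $x$ where the defining limit exists and $L^1$--convergence forces a.e.\ convergence along a subsequence (this also identifies the object written $H_K(E)$ in Definition \ref{def:foliation}(ii) with $H_K(\phi_E)$). As $\mathscr H^Q(F\cap\Omega)\le\mathscr H^Q(\Omega)<+\infty$, the first term tends to $2\int_{F\cap\Omega}H_K(\phi_E)(x)\,dx$; in the second, $\chi_{\Omega\setminus B(x,1/h)}(y)\,S(x,y)K(y^{-1}x)$ converges pointwise a.e.\ on $\{x\ne y\}$ to $\chi_\Omega(y)\,S(x,y)K(y^{-1}x)$ and is dominated by $\chi_{(E\setminus\Omega)\times\Omega}(x,y)K(y^{-1}x)$, integrable by the estimate above, so dominated convergence yields $2\int_{E\setminus\Omega}\int_\Omega\mathrm{sign}(\phi_E(x)-\phi_E(y))K(y^{-1}x)\,dy\,dx$; this is \eqref{eq:calibrationandmeancurv}. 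The main obstacle is the bookkeeping in the splitting step — keeping track of the antisymmetry of $S$ under $(x,y)\mapsto(y,x)$ so that the diagonal singularity over $\Omega\times\Omega$ cancels in $\mathcal{C}^h_\Omega(F)$ — together with commuting $\lim_{h\to\infty}$ with the integral over $F\cap\Omega$, which is precisely where hypothesis (ii) enters.
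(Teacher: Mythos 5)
Your proof is correct and follows essentially the same route as the paper: truncate the kernel near the diagonal ($\widetilde K_\varepsilon=\chi_{\G\setminus B(0,\varepsilon)}K$, your $\{\|y^{-1}x\|\ge 1/h\}$), use the symmetry of the integrand under $(x,y)\mapsto(y,x)$ to pull out a factor of two and reduce to the $\chi_F(x)$-part, split according to whether $x$ lies in $\Omega$ or in $\Omega^c$, identify the inner integral with $F_h$, and pass to the limit via hypothesis (ii) for the first term and dominated convergence (using $L_K(E\setminus\Omega,\Omega)\le P_K(\Omega;\G)<\infty$) for the second. One small wording slip: after the swap $x\leftrightarrow y$ the two boundary-interaction terms \emph{coincide} (producing the factor of two on the second integral), rather than being "opposite"; your final display is nonetheless the correct one.
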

\begin{proof}
	We introduce the auxiliary kernel $\widetilde K_\varepsilon\colon \G\to [0,+\infty)$ by setting
	\[
	\widetilde{K}_\varepsilon(p)\coloneqq\chi_{\G\setminus B(0,\varepsilon)}(p)K(p),\quad \forall p\in\G.
	\]
	Recalling \eqref{eq:calibrationcabre}, we have that 
	\begin{equation}
	\label{eq:calibracabre}
	\mathcal{C}_\Omega(F)=\lim_{\varepsilon\to 0}\int_{(\G\times \G)\setminus(\Omega^c\times\Omega^c)}\mathrm{sign}(\phi_E(x)-\phi_E(y))(\chi_F(x)-\chi_F(y))\widetilde{K}_\varepsilon(y^{-1}x)\,dx\,dy.
	\end{equation}
	Since $\widetilde K_\varepsilon$ is symmetric we can write
	\begin{equation}
	\label{eq:integralcabre}
	\begin{split}
	&\int_{(\G\times \G)\setminus(\Omega^c\times\Omega^c)}\mathrm{sign}(\phi_E(x)-\phi_E(y))(\chi_F(x)-\chi_F(y))\widetilde{K}_\varepsilon(y^{-1}x)\,dx\,dy= \\
	&2\int_{(\G\times \G)\setminus(\Omega^c\times\Omega^c)}\mathrm{sign}(\phi_E(x)-\phi_E(y))\chi_F(x)\widetilde{K}_\varepsilon(y^{-1}x)\,dx\,dy.
	\end{split}
	\end{equation}
	Now, we split the second integral in \eqref{eq:integralcabre} in two parts: when $y\in F\cap\Omega$ (which implies that $x$ runs through all of $\G$), and when $y\in F\setminus\Omega=E\setminus\Omega$ (which implies that $x$ runs through $\Omega$), and this gives us
	\begin{equation*}
	\begin{aligned}
	&2\int_{(\G\times \G)\setminus(\Omega^c\times\Omega^c)}\mathrm{sign}(\phi_E(x)-\phi_E(y))\chi_F(x)\widetilde{K}_{\varepsilon}(y^{-1}x)\,dx\,dy= \\ 
	&2\int_{F\cap\Omega} \int_{\G}\mathrm{sign}(\phi_E(x)-\phi_E(y))\widetilde{K}_\varepsilon(y^{-1}x) \,dx\,dy
	\\
	&+2\int_{E\setminus\Omega} \int_{\Omega}\mathrm{sign}(\phi_E(x)-\phi_E(y))\widetilde{K}_\varepsilon(y^{-1}x)\,dx\,dy \\
	=&2\int_{F\cap\Omega} H_{\widetilde{K}_{\varepsilon}}[\phi_E](x)dx+2\int_{E\setminus\Omega}\int_{\Omega}\mathrm{sign}(\phi_E(x)-\phi_E(y))\widetilde{K}_\varepsilon(y^{-1}x)\,dx\,dy.
	\end{aligned}
	\end{equation*}
	We notice that, using the notation of \eqref{eq:interact}, one has
	\[
	\int_{E\setminus\Omega}\int_{\Omega} K(y^{-1}x)\,dx\,dy=L_K(E\setminus\Omega,\Omega)\leq L_K(\Omega^c,\Omega)=P_K(\Omega;\G)<+\infty,
	\]
	and, moreover, $|\mathrm{sign}(\phi_E(x)-\phi_E(y))\tilde{K}_\varepsilon(y^{-1}x)|\le K(y^{-1}x)$ for any couple $(x,y)\in E\setminus\Omega\times\Omega$. On the other hand, we know that $H_{\widetilde{K}_\varepsilon}[\phi_E]$ converges in $L^1(\Omega)$ to $H_K[\phi_E]$ as $\varepsilon\to 0$. Therefore, letting $\varepsilon\to 0$ and recalling \eqref{eq:calibracabre}, we conclude the proof.
\end{proof}
\begin{theorem}\label{th:cabre}
	%Assume $K$ satisfies \eqref{eq:nonnegativekernel}, \eqref{eq:evenkernel} and \eqref{eq:stimaL1} and
	 Let $\Omega\subset\G$ be an open set satisfying $P_{K}(\Omega;\G)<+\infty$ and consider a measurable set $E\subset \G$. Assume that $\Omega$ is foliated by super- and sub- solutions adapted to $E$ and let $\phi_E\colon\G\to \R$ be a measurable function satisfying the assumptions of Definition \ref{def:foliation}. Then the following facts hold.
	\begin{itemize}
		\item[(a)] For every  measurable set $F\subseteq \G$ with $F\setminus \Omega= E\setminus \Omega$ one has
		\[
		P_K(E;\Omega)\leq P_K(F;\Omega).
		\]
		\item[(b)] If $K>0$ and $\phi_E$ is continuous and such that $\mathscr H^Q(\{\phi_E=0\}\cap\Omega)=0$ and if there exists $R>0$ such that $\Omega \subseteq B(0,R)$,  $E\setminus\overline{B(0,R)}\neq \emptyset$ and $(\overline{E})^c\setminus\overline{B(0,R)}\neq\emptyset$, then $E$ is the unique measurable set satisfying $\mathrm{(a)}$ (up to sets of measure zero). 
		\item[(c)] If $H_K[\phi_E](x)=0$ for almost every $x\in \Omega$, then 
		\[
		\mathcal{C}_\Omega(F)=\mathcal{C}_\Omega(E)\,
		\]
		for every measurable set $F\subseteq \G$ with $F\setminus \Omega=E\setminus \Omega$.
	\end{itemize}
\end{theorem}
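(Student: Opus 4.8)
The plan is to deduce all three parts from the calibration identity of Proposition~\ref{prop:propcabre} together with Remark~\ref{rmk:duecinque}, dealing with (a) and (c) quickly and concentrating on the uniqueness statement (b).

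For (a) the key point is that, by Remark~\ref{rmk:duecinque}, $P_K(E;\Omega)=\mathcal C_\Omega(E)$ and $P_K(F;\Omega)\ge\mathcal C_\Omega(F)$ for every admissible competitor $F$ (i.e.\ $F\setminus\Omega=E\setminus\Omega$), so it suffices to prove $\mathcal C_\Omega(E)\le\mathcal C_\Omega(F)$. Since (i) and (ii) of Definition~\ref{def:foliation} hold and $P_K(\Omega;\G)<+\infty$, Proposition~\ref{prop:propcabre} applies and exhibits $\mathcal C_\Omega(F)$ as $2\int_{F\cap\Omega}H_K(\phi_E)\,dx$ plus a double integral over $(E\setminus\Omega)\times\Omega$ that is the same for all competitors (as $F\setminus\Omega=E\setminus\Omega$); hence
\[
\mathcal C_\Omega(F)-\mathcal C_\Omega(E)=2\int_\Omega(\chi_F-\chi_E)\,H_K(\phi_E)\,dx.
\]
On $E\cap\Omega$ one has $\chi_F-\chi_E\le0$ and, by (i) and (iii) of Definition~\ref{def:foliation}, $H_K(\phi_E)\le0$; on $\Omega\setminus E$ one has $\chi_F-\chi_E\ge0$ and $H_K(\phi_E)\ge0$. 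Thus the integrand is nonnegative, $\mathcal C_\Omega(F)\ge\mathcal C_\Omega(E)$, and (a) follows. Part (c) comes from the same identity: if $H_K(\phi_E)=0$ a.e.\ in $\Omega$, the first term vanishes for every admissible $F$, so $\mathcal C_\Omega(F)$ reduces to the competitor-independent double integral, i.e.\ $\mathcal C_\Omega(F)=\mathcal C_\Omega(E)$.

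For (b), let $F$ be admissible with $P_K(F;\Omega)=P_K(E;\Omega)$. Chaining $P_K(F;\Omega)\ge\mathcal C_\Omega(F)\ge\mathcal C_\Omega(E)=P_K(E;\Omega)=P_K(F;\Omega)$ forces $\mathcal C_\Omega(F)=\mathcal C_\Omega(E)$ and, crucially, $\mathcal C_\Omega(F)=P_K(F;\Omega)$. Since for any measurable set the inequality $\mathrm{sign}(\phi_E(x)-\phi_E(y))(\chi_F(x)-\chi_F(y))\le|\chi_F(x)-\chi_F(y)|$ holds pointwise, while it is an equality for $F=E$ because $E=\{\phi_E>0\}$, this last equality is exactly equality in that pointwise bound; as $K>0$ everywhere, the corresponding identity must hold for a.e.\ $(x,y)\in(\G\times\G)\setminus(\Omega^c\times\Omega^c)$, and, since $F\setminus\Omega=E\setminus\Omega$ covers the remaining pairs, for a.e.\ $(x,y)\in\G\times\G$. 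In particular $x\in F$, $y\notin F$ imply $\phi_E(x)>\phi_E(y)$ for a.e.\ such pair, so by Dedekind's theorem (as in the proof of Theorem~\ref{th:localminimalitycalibration}) there is a constant $c\in\R$ with $F\subseteq\{\phi_E\ge c\}$ and $F^c\subseteq\{\phi_E\le c\}$ up to $\mathscr H^Q$-null sets.

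The step I expect to be the main obstacle is pinning down $c$, and this is where the extra hypotheses of (b) are used. Since $\phi_E$ is continuous, $E=\{\phi_E>0\}$ is open and $(\overline E)^c$ is a nonempty open set on which $\phi_E\le0$; by assumption $E\setminus\overline{B(0,R)}$ and $(\overline E)^c\setminus\overline{B(0,R)}$ are nonempty open subsets of $\G\setminus\overline{B(0,R)}\subseteq\Omega^c$. The set $\G\setminus\overline{B(0,R)}$ is path-connected (two of its points can be joined by moving radially outward through the dilations $\delta_\lambda$ and along a sufficiently large sphere for the CC distance), so one can join a point where $\phi_E>0$ to a point where $\phi_E\le0$ by a path lying in $\Omega^c$; by the intermediate value theorem $\phi_E$ attains, at points of the open set $\G\setminus\overline{B(0,R)}$, every value in an interval with $0$ in its interior. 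Were $c>0$ (resp.\ $c<0$), continuity would then furnish a nonempty open, hence positive-measure, subset of $\Omega^c$ on which $0<\phi_E<c$ (resp.\ $c<\phi_E<0$); but on $\Omega^c$ we have $F=E=\{\phi_E>0\}$, so $F\cap\Omega^c=\{\phi_E>0\}\cap\Omega^c$ and $F^c\cap\Omega^c=\{\phi_E\le0\}\cap\Omega^c$, and either case contradicts one of the inclusions $F\subseteq\{\phi_E\ge c\}$, $F^c\subseteq\{\phi_E\le c\}$. Hence $c=0$, so up to null sets $F\triangle E=F\triangle\{\phi_E>0\}\subseteq\{\phi_E=0\}$; since $F=E$ on $\Omega^c$ this set lies in $\{\phi_E=0\}\cap\Omega$, which is $\mathscr H^Q$-negligible by hypothesis, and therefore $\mathscr H^Q(F\triangle E)=0$. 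Besides this last step, the other point requiring care is the Dedekind-cut passage from the a.e.\ sign identity to the sublevel-set structure of $F$.
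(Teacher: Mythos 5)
Parts (a) and (c) of your argument are essentially the paper's: you rewrite \eqref{eq:calibrationandmeancurv} so that the difference $\mathcal C_\Omega(F)-\mathcal C_\Omega(E)=2\int_\Omega(\chi_F-\chi_E)\,H_K(\phi_E)\,dx$ is manifestly nonnegative by (iii) of Definition \ref{def:foliation}, while the paper reaches the same inequality through the decomposition \eqref{eq:dueundici}; for (c) both observations are identical. For part (b) you take a genuinely different route. The paper uses the a.e.\ sign identity only on the ``mixed'' pairs $((\widetilde E\cap\Omega)\times\widetilde E^c)\cup(\widetilde E\times(\widetilde E^c\cap\Omega))$ and then sandwiches $\phi_E$ directly on $\widetilde E\cap\Omega$ and $\widetilde E^c\cap\Omega$ by testing against points outside $\overline{B(0,R)}$ where $|\phi_E|$ is small. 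You instead extend the sign identity to a.e.\ $(x,y)\in\G\times\G$ (using $F=E$ on $\Omega^c$ and $E=\{\phi_E>0\}$ a.e.), invoke the same Dedekind-cut device as in Theorem \ref{th:localminimalitycalibration} to get a level $c$ with $F\subseteq\{\phi_E\ge c\}$ and $F^c\subseteq\{\phi_E\le c\}$ up to null sets, and then rule out $c\ne0$ by an intermediate-value argument in $\overline{B(0,R)}^c$. This is a perfectly legitimate alternative, and it has the small advantage of making the role of the outer data visible as a cut-pinning device, which is structurally parallel to the half-space uniqueness proof.

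Two points require care, and they affect the paper's own version as much as yours. First, the step ``$\phi_E$ attains every value in an interval with $0$ in its interior'' (and correspondingly the paper's ``$\phi_E$ takes a negative value'') does not follow immediately from $(\overline E)^c\setminus\overline{B(0,R)}\ne\emptyset$, which only guarantees a point where $\phi_E\le 0$. Your case split rescues this: if $c<0$, the inclusion $F^c\subseteq\{\phi_E\le c\}$ (up to null sets) combined with $F^c\supseteq(\overline E)^c\setminus\overline{B(0,R)}$ and continuity already forces $\phi_E\le c<0$ on the nonempty open set $(\overline E)^c\setminus\overline{B(0,R)}$, and then the IVT produces a positive-measure subset of $\Omega^c$ where $c<\phi_E<0$, giving the contradiction. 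It would be worth inserting this line so the IVT step is applied to a point where $\phi_E\le c$, not merely $\le 0$. Second, both your argument and the paper's implicitly use that $\overline{B(0,R)}^c$ is path-connected; this is true in every Carnot group of topological dimension at least two (each point of $\overline{B(0,R)}^c$ escapes to infinity along $\lambda\mapsto\delta_\lambda x$, $\lambda\ge 1$, which stays outside the ball by homogeneity of the CC norm, so there is a single unbounded component), but your parenthetical appeal to ``a sufficiently large sphere for the CC distance'' being connected is not the cleanest justification and could be replaced by this dilation-ray observation.
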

	\begin{proof}
		(a) By Remark \ref{rmk:duecinque}, it suffices to show that, for every measurable set $F\subseteq \G$ such that $F\setminus\Omega=E\setminus\Omega$, one has
		\[
		\mathcal{C}_{\Omega}(F)\ge \mathcal{C}_{\Omega}(E).
		\]
		We can also assume without loss of generality that $P_K(F;\Omega)<+\infty$. 
		Now, using the fact that $(F\cap\Omega)\cup (E\setminus F)=(E\cap\Omega)\cup (F\setminus E)$ and that both unions are disjoint, we can express the first integral in \eqref{eq:calibrationandmeancurv} as
		\begin{equation}
		\label{eq:dueundici}
		\int_{F\cap\Omega}\!\!\! H_K[\phi_E](x)\,dx=
		\int_{E\cap\Omega}\!\!\! H_K[\phi_E](x)\,dx+
		\int_{F\setminus E}\!\!\! H_K[\phi_E](x)\,dx-
		\int_{E\setminus F}\!\!\! H_K[\phi_E](x)\,dx.
		\end{equation}
		Since both $F\setminus E$ and $E\setminus F$ are contained in $\Omega$, using (iii) of Definition \ref{def:foliation} we get
		\[
		\int_{F\cap \Omega}H_k[\phi_E](x)\,dx\geq \int_{E\cap \Omega}H_K[\phi_E](x)\,dx.
		\]
		Adding to both sides
		\[
		\int_{E\setminus \Omega}\int_\Omega\mathrm{sign}(\phi_E(x)-\phi_E(y))(\chi_F(x)-\chi_F(y))\widetilde K_\varepsilon(y^{-1}x)\,dx\,dy,
		\]
		and recalling that $E\setminus \Omega=F\setminus\Omega$, we conclude the proof of (a).
		
		(b) Assume $\widetilde E\subseteq \G$ is a measurable set such that
		\[
		P_K(\widetilde E;\Omega)\leq P_K(F;\Omega),
		\]
		for every measurable $F\subseteq \G$ with $F\setminus\Omega=E\setminus\Omega$. Then, we have $P_K(E;\Omega)=P_K(\widetilde E;\Omega)$. By \eqref{eq:duetre}, \eqref{eq:calibracabre} and Remark \ref{rmk:duecinque} we have 
		\[
		\begin{aligned}
		\frac 12\int_{(\G\times \G)\setminus(\Omega^c\times \Omega^c)}&|\chi_E(y)-\chi_E(x)|K(y^{-1}x)\,dy\,dx\\&=\int_{(\G\times \G)\setminus(\Omega^c\times \Omega^c)}\mathrm{sign}(\phi_E(y)-\phi_E(x))(\chi_{\widetilde{E}}(y)-\chi_{\widetilde{E}}(x))K(y^{-1}x)\,dy\,dx.
		\end{aligned}
		\]
		Since $K>0$, we get 
		\begin{equation}
		\label{eq:duedodici}
		\phi_E(x)>\phi_E(y)\quad\text{for a.e.\ } (x,y)\in((\widetilde E\cap\Omega)\times \widetilde E^c)\cup(\widetilde E\times(\widetilde E^c\cap\Omega)).
		\end{equation}
		By hypothesis, the function $\phi_E$ takes both a positive and a negative value in $\overline{B(0,R)}^c$. Since $\phi_E$ is continuous, for every $\delta>0$ small enough, both $\{-\delta<\phi_E<0\}\setminus\overline{B(0,R)}$ and $\{0<\phi_E<\delta\}\setminus\overline{B(0,R)}$ are nonempty open sets. Hence, since $\widetilde E^c\setminus\overline{B(0,R)}=\{\phi_E\leq 0\}\setminus\overline{B(0,R)}$ and $\widetilde E\setminus\overline{B(0,R)}=\{\phi_E>0\}\setminus\overline{B(0,R)}$, from \eqref{eq:duedodici}, by letting $\delta\to 0$, we deduce 
		\[
		\phi_E(x)\ge 0\quad\text{for a.e.\ $x\in \widetilde E\cap\Omega$}\quad\text{and}\quad\phi_E(y)\le 0\quad\text{for a.e.\ }y\in \widetilde E^c\cap\Omega.
		\]
		Since the set $\{\phi_E=0\}\cap\Omega$ has zero measure by assumption, we deduce that $\widetilde E\cap\Omega=\{\phi_E>0\}\cap\Omega=E\cap\Omega$ up to a measure zero set.
		
		(c) If $H_K[\phi_E](x)=0$ for almost every $x\in \Omega$, the last two integrals in \eqref{eq:dueundici} vanish, and thus \eqref{eq:calibrationandmeancurv} leads to $\mathcal{C}_\Omega(F)=\mathcal{C}_\Omega(E)$, for every measurable set $F\subseteq \G$ with $F\setminus \Omega=E\setminus\Omega$.
	\end{proof}

\begin{remark}
	\label{rem:similarities}
	Notice that Theorem \ref{th:calibrationimpliesminimizer} and Theorem \ref{th:cabre} present many similarities. On the one hand, Theorem \ref{th:calibrationimpliesminimizer} holds for general measurable functions and, moreover, if a set $\Omega$ admits sub- and super- solutions adapted to $E$, then $\zeta(x,y)=\mathrm{sign}(\phi_E(y)-\phi_E(x))$ is a calibration for $\chi_E$. However, although Theorem \ref{th:cabre} requires more assumptions, it gives us additional information about the local minimality of the functional $\mathcal C_\Omega$ and about the uniqueness of minimizers, which was only known for the specific case of halfspaces, as shown in Theorem \ref{th:localminimalitycalibration}.
\end{remark}

\section{$\Gamma$-convergence of the rescaled functionals}

In this section we analyze the $\Gamma$-limit of the rescaled sequence $\frac{1}{\varepsilon}P_{K_\varepsilon}(E_\varepsilon;\Omega)$, where $(E_\varepsilon)_{\varepsilon>0}$ is a family of measurable sets converging in $L^1(\Omega)$ to some set $E\subseteq\Omega$. In the study of the asymptotic behavior of the functionals, one has to deal with in the blow-up of sets of finite perimeter. In the setting of Carnot groups, one of the main and still unsolved problem concerns the regularity of the (reduced) boundary of a set of finite perimeter. The solution of this problem in the Euclidean spaces goes back to De Giorgi \cite{Deg55}. He proved that the reduced boundary of a set of finite perimeter in ${\mathbb R}^n$ is $(n-1)$-rectifiable, i.e.,\ it can be covered, up to a set of $\mathscr H^{n-1}$-measure zero, by a countable family of $C^1$-hypersurfaces. The validity of such a result has deep consequences in the development of Geometric Measure Theory and Calculus of Variations (see e.g.\ the monographs \cite{AFP, EG}).

The validity of a rectifiability-type Theorem in the context of Carnot groups is still not yet known in full generality. However, there are complete results in all Carnot groups of step 2 (see \cite{FSSC01, FSSC03}) and in the so-called Carnot groups of type $\star$, see \cite{Marchi}, which generalize the class of step 2. %as those Carnot Groups for which there exists a basis $(X_1,\ldots,X_m)$ of the first layer $\g_1$ such that
%$$
%[X_j,[X_j,X_i]]=0,\,\forall i,j=1,\ldots,m.
%$$
In these papers the authors show that the reduced boundary of a set of finite perimeter in a Carnot group of the chosen class is rectifiable with respect to the intrinsic structure of the group. 

Motivated by these results, we introduce the following notation, see \cite{DonVittone} that will be used in Theorem \ref{th:gammaliminf} and Remark \ref{rem:rettificabilita}. Also recall Definitions \ref{def:rectifiable} and \ref{def:reducedboundary}.
\begin{definition}
	\label{def:proprietaR}
	We say that a Carnot group $\G$ satisfies property $\mathcal R$ if every set $E\subseteq \G$ of locally finite perimeter in $\G$ has rectifiable reduced boundary.
\end{definition}
As already mentioned before, property $\mathcal R$ is satisfied in Euclidean spaces, in all Carnot groups of step $2$ and in the so-called Carnot groups of type $\star$.

The first part of this section is devoted to the proof of a compactness criterion for the rescaled family $\frac 1\varepsilon P_{K_\varepsilon}$, see Theorem \ref{th:compactness}. The final part of this section deals with the estimate of the $\Gamma$-liminf for the same rescaled family of functionals, in the class of Carnot groups satisfying property $\mathcal R $.
We start with the following

\begin{proposition}\label{prop:piove}
	Let $E,F\subseteq \G$ be measurable sets. Then the following fact hold:
	%\begin{itemize}
		%\item[(i)]
		 If $N\subseteq \G$ is a set of finite perimeter in $\G$ such that $E\subseteq N$ and $F\subseteq N^c$, then
		\[
		\limsup_{\varepsilon\to0}\frac 1\varepsilon L_\varepsilon (E,F)\leq \frac {P_\G(N)}2  \int_\G K(\xi)d(\xi,0)\,d\xi.
		\] 
%		\item[(ii)] If $d(E,F)>0$ and $\int_\G K(\xi)d(\xi,0)\,d\xi<+\infty$,
	%	 then
		% \[
		 %\lim_{\varepsilon\to0}\frac 1\varepsilon L_\varepsilon(E,F)=0.
		 %\]
		 %\item[(iii)] If $\mu(F)<+\infty$ and $d(E;F)>0$, then 
		 %\[
		 %\lim_{\varepsilon \to 0}L_\varepsilon(E,F)=0.
		 %\]
	%\end{itemize}
\end{proposition}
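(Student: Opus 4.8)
The plan is to bound the rescaled interaction $\frac1\varepsilon L_\varepsilon(E,F)$ from above by $\frac1\varepsilon L_\varepsilon(N, N^c)$ and then estimate the latter directly using a change of variables and Proposition~\ref{prop:frechetkolmogorov}. Since $E\subseteq N$ and $F\subseteq N^c$, disjointness gives $L_\varepsilon(E,F)\le L_\varepsilon(N,N^c)=P_{K_\varepsilon}(N;\G)$, so it suffices to prove
\[
\limsup_{\varepsilon\to0}\frac1\varepsilon P_{K_\varepsilon}(N;\G)\le \frac{P_\G(N)}2\int_\G K(\xi)d(\xi,0)\,d\xi.
\]
Writing out $P_{K_\varepsilon}(N;\G)=L_{K_\varepsilon}(N,N^c)=\frac12\int_\G\int_\G K_\varepsilon(y^{-1}x)|\chi_N(x)-\chi_N(y)|\,dx\,dy$, I would substitute $x=y\xi$ in the inner integral to obtain
\[
P_{K_\varepsilon}(N;\G)=\frac12\int_\G K_\varepsilon(\xi)\left(\int_\G|\chi_N(y\xi)-\chi_N(y)|\,dy\right)d\xi .
\]

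Next I would recall $K_\varepsilon=\varepsilon^{-Q}K\circ\delta_{1/\varepsilon}$ and change variables $\xi=\delta_\varepsilon\zeta$ (whose Jacobian with respect to $\mathscr H^Q$ is $\varepsilon^{Q}$) to get
\[
\frac1\varepsilon P_{K_\varepsilon}(N;\G)=\frac1{2\varepsilon}\int_\G K(\zeta)\left(\int_\G|\chi_N(y\,\delta_\varepsilon\zeta)-\chi_N(y)|\,dy\right)d\zeta .
\]
By Proposition~\ref{prop:frechetkolmogorov} (the global estimate \eqref{eq:stimaglobale}) the inner integral is bounded by $d(0,\delta_\varepsilon\zeta)\,P_\G(N)=\varepsilon\,d(0,\zeta)\,P_\G(N)$; it is also trivially bounded by $2\mathscr H^Q(N)$ whenever $N$ has finite measure, but more to the point we only need the perimeter bound together with the integrability hypothesis \eqref{eq:stimaL1} on $K$. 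Thus the integrand $\frac1{2\varepsilon}K(\zeta)\int_\G|\chi_N(y\,\delta_\varepsilon\zeta)-\chi_N(y)|\,dy$ is dominated by $\tfrac12 P_\G(N)\,d(0,\zeta)K(\zeta)$ near the origin and by $K(\zeta)\cdot\varepsilon^{-1}\min\{2\mathscr H^Q(N),\varepsilon d(0,\zeta)P_\G(N)\}$ away from it — in any case by an $\varepsilon$-independent $L^1(\G)$ function once one uses \eqref{eq:stimaL1} (splitting $B(0,1)$ and its complement as in Proposition~\ref{prop:perimetroKfinito}).

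Finally I would pass to the limit. Fix $\zeta$; as $\varepsilon\to0$, $\delta_\varepsilon\zeta\to0$, so by Corollary~\ref{mix} applied to $\chi_N$ restricted suitably — or directly, since $\frac1\varepsilon\int_\G|\chi_N(y\,\delta_\varepsilon\zeta)-\chi_N(y)|\,dy\le d(0,\zeta)P_\G(N)$ and one expects the ratio to converge to the directional derivative of $\chi_N$ — the quantity $\frac1{2\varepsilon}\int_\G|\chi_N(y\,\delta_\varepsilon\zeta)-\chi_N(y)|\,dy$ has $\limsup$ at most $\tfrac12 d(0,\zeta)P_\G(N)$ for every $\zeta$. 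Then Fatou's lemma (in the $\limsup$ form, i.e.\ reverse Fatou justified by the $\varepsilon$-independent dominating function) yields
\[
\limsup_{\varepsilon\to0}\frac1\varepsilon P_{K_\varepsilon}(N;\G)\le \int_\G K(\zeta)\left(\limsup_{\varepsilon\to0}\frac1{2\varepsilon}\int_\G|\chi_N(y\,\delta_\varepsilon\zeta)-\chi_N(y)|\,dy\right)d\zeta\le \frac{P_\G(N)}2\int_\G K(\zeta)d(0,\zeta)\,d\zeta,
\]
which is the claim. The main obstacle is the interchange of $\limsup_\varepsilon$ with the outer integral: one must produce an $\varepsilon$-uniform integrable dominant, and this is exactly where the hypothesis $\int_\G\min\{1,d(0,\xi)\}K(\xi)\,d\xi<\infty$ together with the two-sided bound $\int_\G|\chi_N(y\,\delta_\varepsilon\zeta)-\chi_N(y)|\,dy\le\min\{2\mathscr H^Q(N),\varepsilon d(0,\zeta)P_\G(N)\}$ from Proposition~\ref{prop:frechetkolmogorov} is used; a minor point is that one should reduce to $N$ of finite measure (or otherwise only invoke the perimeter bound, noting $K(\zeta)d(0,\zeta)$ need not be integrable at infinity without some decay, so in fact the statement implicitly uses that $P_\G(N)<\infty$ forces the relevant integral to be finite precisely when $\int K(\xi)d(\xi,0)\,d\xi<\infty$, which is the hypothesis appearing in the displayed bound).
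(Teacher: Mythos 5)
Your opening moves coincide with the paper's: bound $L_\varepsilon(E,F)\le L_\varepsilon(N,N^c)$, change variables to arrive at $\frac1{2\varepsilon}\int_\G K(\zeta)\bigl(\int_\G|\chi_N(y\,\delta_\varepsilon\zeta)-\chi_N(y)|\,dy\bigr)\,d\zeta$, and invoke Proposition~\ref{prop:frechetkolmogorov} to get $\int_\G|\chi_N(y\,\delta_\varepsilon\zeta)-\chi_N(y)|\,dy\le\varepsilon\,d(0,\zeta)\,P_\G(N)$. At that point you are done: plugging this pointwise bound into the $\zeta$-integral gives
\[
\frac1\varepsilon L_\varepsilon(E,F)\le\frac{P_\G(N)}{2}\int_\G K(\zeta)\,d(0,\zeta)\,d\zeta
\]
for \emph{every} $\varepsilon>0$, and the $\limsup$ inequality is immediate. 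That is exactly the paper's three-line proof.

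The final detour through reverse Fatou / dominated convergence is not just unnecessary but slightly off: you need an $\varepsilon$-uniform $L^1(\G)$ dominant, and the natural candidate $\tfrac12 P_\G(N)\,d(0,\zeta)K(\zeta)$ is \emph{not} guaranteed to be integrable under hypothesis \eqref{eq:stimaL1}, which only controls $\min\{1,d(0,\zeta)\}K(\zeta)$. Your closing remark correctly senses this but resolves it confusingly, as if $\int K(\xi)d(\xi,0)\,d\xi<\infty$ were a standing hypothesis. It is not; when that integral diverges, the stated bound is trivially true, and the direct (non-limit) estimate above covers both cases without any measure-theoretic interchange. So keep the first two-thirds of your argument and delete the Fatou paragraph.
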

	\begin{proof}
		By a change of variables and Proposition \ref{prop:frechetkolmogorov} we have
		\[
		\begin{aligned}
		\frac 1\varepsilon L_\varepsilon (E,F)&\leq \frac 1\varepsilon \int_N\int_{N^c}\frac 1{\varepsilon^Q}K(\delta_{1/\varepsilon}(y^{-1}x))\,dydx\\
		&=\frac 1{2\varepsilon}\int_\G\int_\G K(g)|\chi_N(x\delta_\varepsilon g)-\chi_N(x)|\,dgdx\\
		&\leq \frac{P_\G(N)}{2}\int_\G K(\xi)d(\xi,0)\,d\xi.\qedhere
		\end{aligned}
		\]
	\end{proof}
		%(ii) Denote by $\eta\coloneqq \min \{ 1, d(E,F)\}>0$. Again by a change of variables, we can write
		%\[
		%\begin{aligned}
		%\frac 1\varepsilon L_\varepsilon(E,F)&\leq \frac 1{\eta\varepsilon}\int_E\int_F K_\varepsilon (y^{-1}x)\min\{1,d(y,x)\}\,dxdy\\
	%& =\frac 1\eta \int_E\int_\G K(g)\min \{1,d(g,0)\}\chi_F(y\delta_\varepsilon g)\,dgdy.
	%	\end{aligned}
	%	\]
		%By noticing that $\chi_F(y\delta_\varepsilon g)$ converges to $0$ as $\varepsilon \to 0$, for almost every $y\in E$, we conclude the proof by means of the Dominated Convergence Theorem. \\
	%	(iii). By definition of $L_\varepsilon$ and by a change of variable formula, we have
	%	\[
	%	L_\varepsilon(E,F)=\frac1{\varepsilon^Q}\int_E\int_FK(\delta_{1/\varepsilon}(y^{-1}x))\,dydx=\varepsilon^Q\int_{\delta_{1/\varepsilon}E}\int_{\delta_{1/\varepsilon}F} K(y^{-1}x)\,dydx
	%	\]
	%	Denoting by $\eta\coloneqq d(E,F)>0$ and by $F^r\coloneqq\{x\in \G: d(x,F)\geq r\}$, for any positive $r$, we notice that $\delta_{1/\varepsilon}E\subseteq \delta_{1/\varepsilon}F^{\eta/\varepsilon}$ and therefore
		%\[
	%	\begin{aligned}
	%	L_\varepsilon(E,F)&\leq \varepsilon^Q\int_{\delta_{1/\varepsilon}F^{\eta/\varepsilon}}\int_{\delta_{1/\varepsilon}F}K(y^{-1}x)\,dydx=\varepsilon^Q\mu(\delta_{1/\varepsilon}F)\int_{B\left(0,\varepsilon^{-1}\eta\right)^c}K(\xi)\,d\xi\\
	%	&=\mu(F)\int_{B\left(0,\varepsilon^{-1}\eta\right)^c}K(\xi)\,d\xi.
		%\end{aligned}
		%\]
		%The thesis then follows by \eqref{eq:stimaL1}.

	 Before the proof of the compactness Theorem, we remark the validity of the following fact, whose proof is an immediate calculation. We denote by $J_G$ the functional in \eqref{eq:J1J2} with kernel $G$ and by $P_G$ the corresponding perimeter.
	\begin{lemma}\label{lemma:disuguaglianzaconvoluta}
	Let $G\in L^1(\mathbb{G})$ be a positive function. Then, for any $u\in L^{\infty}(\G)$ it holds that
	\[
	\int_{\G}\int_{\mathbb{G}}(G\ast G)(y)|u(xy)-u(x)|\;dydx\leq 2\left\|G\right\|_{L^1(\mathbb{G})}J_G(u;\mathbb{G}).
	\]
	In particular, if we choose $u=\chi_E$ we have
	\[
	\int_{\G}\int_{\mathbb{G}}(G\ast G)(y)|\chi_E(xy)-\chi_E(x)|\;dydx\leq 4\left\|G\right\|_{L^1(\mathbb{G})}P_G(E).
	\]
	\end{lemma}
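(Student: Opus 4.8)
The plan is to prove Lemma \ref{lemma:disuguaglianzaconvoluta} by a direct computation that exploits the convolution structure of $G\ast G$, the left-invariance of the group operation, and Fubini's theorem. First I would observe that since $G$ is positive, $(G\ast G)(y)=\int_\G G(yz^{-1})G(z)\,dz$, so the left-hand side becomes
\[
\int_\G\int_\G\int_\G G(yz^{-1})G(z)|u(xy)-u(x)|\,dz\,dy\,dx.
\]
The key step is then to perform the change of variables $y\mapsto wz$ (for fixed $z$), which, by left-invariance of the Haar measure $\mathscr H^Q$, turns the inner integral into $\int_\G G(w)G(z)|u(xwz)-u(x)|\,dw$. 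Inserting the trivial bound
\[
|u(xwz)-u(x)|\leq |u(xwz)-u(xw)|+|u(xw)-u(x)|
\]
and integrating in $x$, the left-invariance of $\mathscr H^Q$ again lets me replace $\int_\G|u(xwz)-u(xw)|\,dx$ by $\int_\G|u(xz)-u(x)|\,dx$.

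At this point the triple integral splits as a sum of two terms, each of the form $\big(\int_\G G\big)\cdot\big(\int_\G\int_\G G(\cdot)|u(x\cdot)-u(x)|\,d\cdot\,dx\big)$, and both are bounded by $\|G\|_{L^1}J^1_G(u;\G)\le 2\|G\|_{L^1}J_G(u;\G)$; summing the two gives the claimed factor of $2\|G\|_{L^1(\G)}$. I should be a little careful that all the manipulations are legitimate, i.e.\ that everything is non-negative so Tonelli applies and no integrability hypothesis beyond $G\in L^1$ and $u\in L^\infty$ is needed; this is immediate since $G\ge0$ and $|u(xy)-u(x)|$ is bounded, and the finiteness of $J_G(u;\G)$ is what makes the right-hand side meaningful (if it is infinite there is nothing to prove).

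For the ``in particular'' statement I would just specialise $u=\chi_E$, note that $|\chi_E(xy)-\chi_E(x)|=|\chi_E(xy)-\chi_E(x)|^2$, and recall from the identities displayed after \eqref{eq:J1J2} that $J^1_G(E;\G)=2L_G(E^c,E)=2P_G(E)$, whence $J_G(\chi_E;\G)=\frac12 J^1_G(E;\G)=P_G(E)$ (the term $J^2_G$ vanishes when $\Omega=\G$); plugging this into the general bound produces the factor $4\|G\|_{L^1(\G)}P_G(E)$. I do not anticipate any real obstacle here — the only point requiring a moment's attention is keeping track of which variable is translated on which side in each change of variables, since $\G$ is non-abelian, so that left-invariance of the Haar measure is always applied to a genuine left translation $x\mapsto xg$ (as opposed to $x\mapsto gx$).
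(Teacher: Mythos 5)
The paper offers no proof of this lemma (it is described as ``an immediate calculation''), so your proposal stands on its own. Your strategy is the natural one and is sound: expand the convolution, substitute $y=wz$, insert the triangle inequality centred at $u(xw)$, pull the integral in $x$ through a translation, and recognise each of the two resulting terms as $\|G\|_{L^1(\G)}\cdot J^1_G(u;\G)$. Carried out carefully this yields
\[
\int_\G\int_\G (G*G)(y)\,|u(xy)-u(x)|\,dy\,dx\ \le\ 2\,\|G\|_{L^1(\G)}\,J^1_G(u;\G)\ =\ 4\,\|G\|_{L^1(\G)}\,J_G(u;\G),
\]
which upon specialising $u=\chi_E$ gives $4\|G\|_{L^1}P_G(E)$, exactly the bound the paper uses in Theorem~\ref{th:compactness}.

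Two points deserve attention. First, you repeatedly invoke \emph{left}-invariance of $\mathscr H^Q$, but both substitutions you perform, $y\mapsto wz$ and $x\mapsto xw$, are \emph{right} translations (and your closing parenthetical even identifies $x\mapsto xg$ as ``a genuine left translation'', which has the terminology backwards: $\tau_g z = gz$ is the left translation in this paper). What you actually need is right-invariance of the Haar measure, which does hold here because Carnot groups are nilpotent, hence unimodular, so $\mathscr H^Q$ is bi-invariant; you should say so explicitly rather than appeal to a notion that, as stated, does not apply. Second, your bookkeeping of constants is internally inconsistent: you correctly find that each term equals $\|G\|_{L^1}J^1_G(u;\G)$, yet you then assert that ``summing the two gives the claimed factor of $2\|G\|_{L^1}$'' in front of $J_G$, even though $J^1_G(u;\G)=2J_G(u;\G)$ when $\Omega=\G$, so the sum is $4\|G\|_{L^1}J_G(u;\G)$. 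This mismatch traces back to the lemma statement itself (the first display with $J_G$ and constant $2$ is inconsistent with the ``in particular'' display, which has constant $4$ despite $J_G(\chi_E;\G)=P_G(E)$); your argument in fact proves the $J^1_G$-version (equivalently, constant $4$ in front of $J_G$), which is the one that is used downstream. You should state the bound you actually obtain and flag the discrepancy rather than paper over it.
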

\begin{comment}
 We introduce the class of John domains, which play a role in the proof of Theorem \ref{th:compactness} below. 
\textcolor{red}{\begin{definition}
	Let\footnote{la parte in rosso forse la possiamo cassare} $\Omega\subset\G$ a bounded, open set. We say that $\Omega$ is a John domain if there exist $p\in\Omega$ and $C>0$ such that, for every $q\in\Omega$, there is $T>0$ and a continuous and rectifiable curve $\gamma:[0,T]\rightarrow\Omega$ parametrized by arc-length such that $\gamma(0)=q$, $\gamma(T)=p$ and
	\[
	d(\gamma(t),\Omega^c)\ge Ct,
	\]
	for any $t\in[0,T]$.
\end{definition}
It was proved in \cite[Theorem 8.1]{HajKos} that a Rellich-Kondrachov-type theorem holds for John domains in metric measure spaces with doubling property and Poincar\'e inequality. In the setting of Carnot groups, this result reads as follows.}
\textcolor{red}{
\begin{theorem}\label{th:embed}
	Let $\Omega\subseteq\G$ be a John domain. Then the following facts hold.
	\begin{enumerate}
		\item[(i)] If $1\leq p<Q$ and $1\leq q< p^*$, the embedding $W^{1,p}_{\G}(\Omega)\hookrightarrow L^q(\Omega)$ is compact.
		\item[(ii)] If $p\ge Q$ and $q\ge 1$, the embedding $W^{1,p}_{\G}(\Omega)\hookrightarrow L^q(\Omega)$ is compact,
	\end{enumerate}
where $p^\ast\coloneqq\frac{Qp}{Q-p}$ is the Sobolev conjugate exponent of $p$.
\end{theorem}
}
\end{comment}
\noindent We are ready to prove the compactness result.
\begin{theorem}\label{th:compactness}
	Let $\Omega\subseteq \G$ be a bounded open set. Let $(\varepsilon_n)$ be an infinitesimal sequence of positive numbers and let $(E_n)$ be a sequence of measurable sets in $\Omega$. Assume that there exists $C>0$ such that
	\begin{equation}\label{eq:equi}
	\frac 1{\varepsilon_n}P_{\varepsilon_n}(E_n;\Omega)\leq C,\quad \forall n\in \mathbb N.
	\end{equation}
	Then, there exist a subsequence $(E_{n_k})$ of $(E_n)$ and a set $E$ of finite perimeter in $\Omega$ such that $(E_{n_k})$ converges to $E$ in $L^1(\Omega)$.
\end{theorem}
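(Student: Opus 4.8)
The plan is to derive a uniform bound on a \emph{smoothed} version of each $\chi_{E_n}$ and then invoke the classical Rellich--Kondrachov theorem together with a standard compactness argument. The key idea is that the hypothesis \eqref{eq:equi}, after unwinding the definition of $P_{\varepsilon_n}$ and changing variables via $\delta_{\varepsilon_n}$, gives a bound of the form
\[
\int_\G\int_\G K(g)\,|\chi_{E_n}(x\delta_{\varepsilon_n}g)-\chi_{E_n}(x)|\,dg\,dx\leq 2\varepsilon_n C,
\]
valid once we extend each $E_n$ by $\chi_{E_n}=0$ outside $\Omega$ (legitimate since $E_n\subseteq\Omega$, and the boundary interaction term $J^2$ only helps). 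Because $K$ is integrable against $\min\{1,\|\cdot\|\}$ and (by the hypothesis in Theorem \ref{th:gammaconvintro}, or at least by positivity near the origin) $K$ is bounded below on some ball $B(0,\delta_0)$, this controls the $L^1$-modulus of continuity of $\chi_{E_n}$ at scale $\varepsilon_n$ in a quantitative way: roughly, $\|\tau_q\chi_{E_n}-\chi_{E_n}\|_{L^1}\lesssim C\varepsilon_n$ for $\|q\|\leq \delta_0$, with the implicit constant coming from $\inf_{B(0,\delta_0)}K$.

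Next I would \textbf{mollify}. Pick a smooth nonnegative kernel $\rho\in C_c^\infty(\G)$ supported in $B(0,1)$ with $\int\rho=1$, set $\rho_t\coloneqq t^{-Q}\rho\circ\delta_{1/t}$, and define $u_{n}\coloneqq \chi_{E_n}*\rho_{\varepsilon_n}$. From the modulus-of-continuity bound above one gets two facts: first, $\|u_n-\chi_{E_n}\|_{L^1(\G)}\leq \sup_{\|q\|\leq\varepsilon_n}\|\tau_q\chi_{E_n}-\chi_{E_n}\|_{L^1}\lesssim C\varepsilon_n\to0$; second, differentiating under the integral sign and using left-invariance of the horizontal vector fields,
\[
\|X_j u_n\|_{L^1(\G)}=\|\chi_{E_n}*(X_j\rho_{\varepsilon_n})\|_{L^1}\leq \varepsilon_n^{-1}\|X_j\rho\|_{L^1}\cdot\sup_{\|q\|\leq\varepsilon_n}\|\tau_q\chi_{E_n}-\chi_{E_n}\|_{L^1}+ \text{(similar)},
\]
which is bounded uniformly in $n$ by a constant multiple of $C\|\nabla_X\rho\|_{L^1}$; here one uses the cancellation $\int X_j\rho_{\varepsilon_n}=0$ to replace $\chi_{E_n}(xg^{-1})$ by its increment. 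Hence $(u_n)$ is bounded in $W^{1,1}_\G(\G)$, and all $u_n$ are supported in a fixed bounded neighborhood $\Omega'$ of $\Omega$ (since $E_n\subseteq\Omega$ and the mollification enlarges supports by at most $\varepsilon_n\leq 1$).

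Then I would apply \textbf{compactness}: by the Rellich--Kondrachov theorem for the Sobolev space $W^{1,1}_\G$ on bounded open sets in Carnot groups (equivalently, the Sobolev--Poincaré inequality plus the usual Fréchet--Kolmogorov argument, using Corollary \ref{mix} for the equi-smallness of translations, which for $(u_n)$ follows from the uniform $W^{1,1}_\G$ bound via Proposition \ref{prop:frechetkolmogorov}), the sequence $(u_n)$ is precompact in $L^1(\Omega')$. Passing to a subsequence, $u_{n_k}\to v$ in $L^1$; since $\|u_{n_k}-\chi_{E_{n_k}}\|_{L^1}\to0$, also $\chi_{E_{n_k}}\to v$ in $L^1$, and as the $\chi_{E_{n_k}}$ are characteristic functions the limit $v=\chi_E$ for some measurable $E\subseteq\Omega$. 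Finally, $E$ has finite perimeter in $\Omega$: by lower semicontinuity, for any $g\in B(0,r)$ with $\overline{B(0,2r)}\subseteq\Omega$,
\[
\int_{\Omega}|\chi_E(xg)-\chi_E(x)|\,dx\leq\liminf_k\int_\G|\chi_{E_{n_k}}(xg)-\chi_{E_{n_k}}(x)|\,dx\lesssim C\,d(0,g),
\]
which by the characterization of $BV_\G$ through difference quotients yields $P_\G(E;\Omega)<+\infty$ (alternatively, lower semicontinuity of total variation applied to the uniformly $W^{1,1}_\G$-bounded mollifications gives the same conclusion directly).

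\textbf{The main obstacle} is the transition from the single-scale control \eqref{eq:equi} — an integral against the \emph{fixed} kernel $K$ evaluated at the \emph{rescaled} displacement $\delta_{\varepsilon_n}g$ — to a genuine uniform $W^{1,1}_\G$ bound on the mollifications. One must be careful that the lower bound on $K$ near the origin (needed to extract the modulus-of-continuity estimate) is available under the running hypotheses; in the generality of this section $K$ is only assumed to satisfy \eqref{eq:nonnegativekernel}--\eqref{eq:stimaL1}, so one presumably also uses an additional positivity assumption such as $\inf_{B(0,1)}K>0$ (consistent with the hypotheses of Theorem \ref{th:gammaconvintro}), and one should track exactly how the constant $C$ and $\inf_{B(0,\delta_0)}K$ enter the final perimeter bound. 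The rest — mollification estimates, Rellich--Kondrachov, lower semicontinuity — is standard once the group structure (left-invariance of $X_j$, behavior of $\delta_\lambda$, left-invariance of Haar measure) is used correctly in the change of variables.
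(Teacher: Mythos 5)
The overall strategy you sketch (mollify $\chi_{E_n}$ at scale $\varepsilon_n$, obtain a uniform $W^{1,1}_\G$ bound on the mollifications, invoke Rellich--Kondrachov, then transfer compactness and lower semicontinuity back) is exactly the one the paper uses. But your way of getting the $W^{1,1}_\G$ bound has a genuine gap, and you yourself flag it without resolving it.

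The hypothesis gives, after the change of variables, only the \emph{averaged} estimate
\[
\int_\G K(g)\int_\G|\chi_{E_n}(x\delta_{\varepsilon_n}g)-\chi_{E_n}(x)|\,dx\,dg\leq 2C\varepsilon_n,
\]
not the pointwise statement $\|\tau_q\chi_{E_n}-\chi_{E_n}\|_{L^1}\lesssim C\varepsilon_n$ for all $\|q\|\leq\delta_0$ that you then feed into $\|X_j u_n\|_{L^1}\leq\varepsilon_n^{-1}\|X_j\rho\|_{L^1}\sup_{\|q\|\leq\varepsilon_n}\|\tau_q\chi_{E_n}-\chi_{E_n}\|_{L^1}$. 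Extracting a pointwise modulus of continuity from the averaged one is false in general; what actually works is to keep the average and instead \emph{dominate the mollifier by the kernel}, i.e.\ arrange $\varphi\leq\text{(something built from $K$)}$ and $|\nabla_X\varphi|\leq\text{(something built from $K$)}$ so that the averaged bound applies directly. For this you reach for $\inf_{B(0,\delta_0)}K>0$, but that is an \emph{extra hypothesis}: in Section 3 (hence in this Theorem) $K$ satisfies only \eqref{eq:nonnegativekernel}--\eqref{eq:stimaL1}, and \eqref{eq:infcappa} is not assumed; indeed the theorem is meant to apply to kernels that may vanish on a whole neighbourhood of the origin (e.g.\ $K=\chi_{B(0,2)\setminus B(0,1)}$).

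The paper's proof closes exactly this gap with a small but crucial trick you are missing: truncate the kernel to $G\coloneqq T\circ K$ (so $G\leq K$, $G\in L^1\cap L^\infty$), and work with the \emph{self-convolution} $G*G$, which is automatically continuous (via Corollary~\ref{mix}) and strictly positive near the origin because $(G*G)(0)=\int_\G G^2>0$ by the symmetry of $K$. One then chooses a positive $\varphi\in C_c^\infty$ with $\varphi\leq G*G$ and $|\nabla_X\varphi|\leq G*G$, mollifies with the normalization of $\varphi$, and uses Lemma~\ref{lemma:disuguaglianzaconvoluta} to pass from $G*G$ back to $P_{G_\varepsilon}\leq P_{K_\varepsilon}$. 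This makes the $W^{1,1}_\G$ bound unconditional on $K$, needing only \eqref{eq:nonnegativekernel}--\eqref{eq:stimaL1}. If you want to repair your argument without this device, you would have to either add the hypothesis $\inf_{B(0,\delta_0)}K>0$ (which weakens the theorem) or choose $\rho$ supported where $K$ is bounded below and reprove the mollifier-to-identity estimates for a mollifier whose support may avoid the origin; the $G*G$ route is cleaner and is what the paper does.
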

\begin{proof}
	We write $E_\varepsilon$ in place of $E_n$, to avoid inconvenient notation. Fix a ball $B$ in $\G$ such that $\Omega\subseteq B$. For any positive $\varphi\in C_c^\infty(\G)\setminus \{0\}$ we define, for every $\varepsilon\in(0,1)$, the map
	\[
	\varphi_\varepsilon(x)\coloneqq \frac 1{ \varepsilon^Q \int_\G \varphi(\xi)\;d\xi}\varphi(\delta_{1/\varepsilon}x),
	\]
	and we consequently set $v_\varepsilon\coloneqq  \chi_{E_\varepsilon}*\varphi_\varepsilon$. We can therefore estimate
	\begin{equation}\label{eq:viepsilon}
	\begin{aligned}
	\int_\G |v_\varepsilon(\xi)-\chi_{E_\varepsilon}(\xi)|\;d\xi & \leq \int_\G \int_\G\varphi_\varepsilon(\eta^{-1}\xi)|\chi_{E_\varepsilon}(\eta)-\chi_{E_\varepsilon}(\xi)|\; d\eta d\xi\\&
	=\int_\G\int_\G \varphi_\varepsilon(\xi)|\chi_{E_\varepsilon}(\eta)-\chi_{E_\varepsilon}(\eta\xi)|\; d\eta d\xi.
	\end{aligned}
	\end{equation}
	Notice that, by definition of $\varphi_\varepsilon$ and since $\varphi$ has compact support, the families $(v_\varepsilon)$ and $(\chi_{E_\varepsilon})$ share the same limits in $L^1(\G)$. Reasoning in a similar way on the horizontal gradient of $v_\varepsilon$ we get
	\begin{equation}\label{eq:gradient}
	\begin{aligned}
	\int_\G|\nabla_X v_\varepsilon(\xi)|\;d\xi&=\int_\G \left|\int_\G \nabla_X \varphi_\varepsilon(\eta^{-1}\xi)\chi_{E_\varepsilon}(\eta)\,d\eta\right|d\xi\\&
	\leq \int_\G \int_\G |\nabla_X \varphi_\varepsilon(\eta^{-1}\xi)||\chi_{E_\varepsilon}(\eta)-\chi_{E_\varepsilon}(\xi)|\,d\eta d\xi
	\\&
	\hphantom{=}+\int_\G \chi_{E_\varepsilon}(\xi)\left|\int_\G \nabla_X \varphi_\varepsilon(\eta^{-1}\xi)\,d\eta\right| d\xi\\&
	=\int_\G\int_\G |\nabla_X\varphi_\varepsilon(\xi)||\chi_{E_\varepsilon}(\eta\xi)-\chi_{E_\varepsilon}(\eta)|\;d\eta d\xi.
	\end{aligned}
	\end{equation}
	Notice that the identity
	\[
	\int_\G\nabla_X \varphi_\varepsilon(\eta^{-1}\xi)\,d\eta=0,
	\]
	holds since $\varphi\in C_c^\infty(\G)$ and horizontal vector fields in Carnot groups are divergence-free (see e.g.\ \cite[Proposition 1.3.8.]{BonLanUgu}).
	Define now the map 
	\[
	T(s)\coloneqq
	\begin{cases}
	s& \text{ if $|s|\leq 1$,}\\
	1 & \text{ otherwise},
	\end{cases}
	\]
	and consider the truncated kernel $G\coloneqq T\circ K$. We notice that $G\geq 0$ and $G\in L^\infty(\G)$. Moreover, by \eqref{eq:stimaL1} and the fact that $T(s)\leq s$ for any $s\in [0,\infty)$, we can estimate
	\[
	\int_\G |(T\circ K)(\xi)| \,d\xi\leq \int_{B(0,1)}\,d\xi+\int_{\G\setminus B(0,1)}K(\xi)\,d\xi<\infty,
	\]
	which implies that $G\in L^1(\G)$. Since $G\in L^1(\G)\cap L^\infty(\G)$, the map $G*G$ is continuous. This is a consequence of the following estimate
	\begin{align*}
	|(G*G)(p)-(G*G)(q)|&\leq \int_{\G} G(\xi)|G(p\xi^{-1})-G(q\xi^{-1})|\, d\xi\\
	&\leq \|G\|_{L^\infty(\G)}\int_{\G}|G(p\xi^{-1})-G(q\xi^{-1})|\, d\xi \\
	&=\|G\|_{L^\infty(\G)}\|\tau_{q^{-1}p}G-G\|_{L^1(\G)},
	\end{align*}
	and Corollary \ref{mix}. We now choose a positive $\varphi\in C_c^\infty(\G)\setminus \{0\}$ such that 
	\[
	\varphi\leq G*G \quad \text{and}\quad|\nabla_X \varphi|\leq G*G.
	\]
	We can assume without loss of generality that $v_\varepsilon \in C_c^\infty(B)$ for every $\varepsilon \in (0,1)$. Setting $G_\varepsilon(\xi)\coloneqq\varepsilon^{-Q}G(\delta_{1/\varepsilon}\xi)$,  and taking \eqref{eq:viepsilon} and \eqref{eq:gradient} into account we obtain
	\begin{equation}\label{eq:tresette}
	\int_\G|v_\varepsilon(\xi)-\chi_{E_\varepsilon}(\xi)|\;d\xi\leq\int_\G\int_\G(G_\varepsilon*G_\varepsilon)(\xi)|\chi_{E_\varepsilon}(\eta\xi)-\chi_{E_\varepsilon}(\eta)|\;d\eta d\xi,
	\end{equation}
	and 
	\begin{equation}\label{eq:tresettebis}
	\int_\G|\nabla_X v_\varepsilon(\xi)|\;d\xi\leq \frac 1\varepsilon\int_\G\int_\G(G_\varepsilon*G_\varepsilon)(\xi)|\chi_{E_\varepsilon}(\eta\xi)-\chi_{E_\varepsilon}(\eta)|\;d\eta d\xi,
	\end{equation}
	where the last inequality comes from the fact that 
	\[
	(\nabla_X\varphi_\varepsilon)(\xi)=\frac 1{\varepsilon^{Q+1}}(\nabla_X\varphi)(\delta_{1/\varepsilon}\xi),
	\] 
	and
	\[
	(G_\varepsilon*G_\varepsilon)(\xi)=\frac 1{\varepsilon^Q}(G*G)(\delta_{1/\varepsilon} \xi).
	\]
	By applying Lemma \ref{lemma:disuguaglianzaconvoluta} and since $E_\varepsilon\subseteq \Omega$ for each 
	$\varepsilon>0$, we have
	\[
	\begin{aligned}
	\int_\G\int_\G (G_\varepsilon *G_\varepsilon)(\xi) &|\chi_{E_\varepsilon}(\eta\xi)-\chi_{E_\varepsilon}(\eta)|\;d\eta d\xi \leq 4 \|G\|_{L^1(\G)} P_{G_\varepsilon}(E_\varepsilon)
	\\&\leq4\|G\|_{L^1(\G)}P_{K_\varepsilon}(E_\varepsilon)= 4\|G\|_{L^1(\G)}\left ( \frac 12 J_\varepsilon ^1(E_\varepsilon;\Omega)+J^2_\varepsilon(E_\varepsilon;\Omega)\right )
	\\&=4\|G\|_{L^1(\G)} J_\varepsilon(E_\varepsilon,\Omega).
	\end{aligned}
	\]
	Condition \eqref{eq:equi} then gives $M>0$ such that 
	\[
	\frac 1\varepsilon\int_\G\int_\G (G_\varepsilon *G_\varepsilon)(\xi) |\chi_{E_\varepsilon}(\eta\xi)-\chi_{E_\varepsilon}(\eta)|\;d\eta d\xi\leq M\|G\|_{L^1(\Omega)}.
	\]
	By the estimates \eqref{eq:tresette} and \eqref{eq:tresettebis} we get that $(v_\varepsilon)$ is equibounded in $W^{1,1}_\G(B)$. Then, by the general version of Rellich-Kondrakov's Compactness Theorem in metric measure spaces, (see \cite[Theorem 8.1]{HajKos}), up to subsequences, $v_\varepsilon$ converges in $L^1(B)$ to some $w$. We moreover observe that \eqref{eq:tresette} also tells us that $w=\chi_{\widetilde E}$ for some $\widetilde E$ with finite measure in $B$. Inequality \eqref{eq:tresettebis} together with the lower semicontinuity of the total variation implies that $\widetilde E$ has finite perimeter in $B$. By setting $E\coloneqq\widetilde E\cap \Omega $, we have that $E$ has finite perimeter in $\Omega$ and, by \eqref{eq:viepsilon}, ${E_\varepsilon}\to E$ in $L^1(\Omega)$.
\end{proof}

\begin{rmk}
	 In case $\Omega$ has finite perimeter and the stronger integrability condition
	\begin{equation}\label{eq:integrabilitaforte}
	\int_\G K(x)d(x,0)\, dx<+\infty
	\end{equation}
	is satisfied, then Theorem \ref{th:compactness} can be strengthened replacing condition \eqref{eq:equi} with the weaker
	\[
	\frac 1{\varepsilon_n}J^1_{\varepsilon_n}(E_n,\Omega)\leq C,\quad \forall n\in \mathbb N.
	\]
	Indeed, applying (i) of Proposition \ref{prop:piove} with $N=\Omega$ one gets some $C_2>0$ such that
	\begin{align*}
	\frac 1{\varepsilon_n}J^2_{\varepsilon_n}(E_{\varepsilon_n},\Omega)&=
	\frac 1{\varepsilon_n}L_{\varepsilon_n}(\Omega\cap E_{\varepsilon_n}, \Omega^c \cap E_{\varepsilon_n}^c)
	\\
	&\leq \frac 12 P_\G(\Omega)\int_\G K(x)d(x,0)\, dx \leq C_2,  \quad \forall n\in \mathbb N.
	\end{align*}
	Notice however that condition \eqref{eq:integrabilitaforte} is in contrast with \eqref{eq:infcappa} below, that will be used in Theorem \ref{th:gammaliminf}.
\end{rmk}

Denote for shortness $B\coloneqq B(0,1)$. For every halfspace $H\subseteq \G$ we set
\begin{equation}\label{eq:biacca}
b(H)\coloneqq\inf\left \{\liminf_{\varepsilon\to 0}\frac 1{2\varepsilon}J_\varepsilon^1(E_\varepsilon, B(0,1)): E_\varepsilon\to H \text{ in $L^1(B(0,1))$}\right \}. 
\end{equation}
A priori, the quantity $b(H)$ defined above might depend on the halfspace $H$. In the following proposition, we find sufficient conditions on the kernel in order to have a uniform positive lower bound on $b$. In Remark \ref{rem:libero}, we observe that, in free Carnot groups, the function $b$ defined above is constant.
\begin{proposition}\label{prop:biacca}
		Assume there exists a monotone decreasing $\widetilde K\colon [0,+\infty)\to [0,+\infty)$ such that $K(\xi)=\widetilde K(\|\xi\|)$ for every $\xi\in \G$ and that
		\begin{equation}\label{eq:infcappa}
		\inf_{r>1}\widetilde K(r)r^{Q+1}>0.
		\end{equation}
		Then 
		\[
		\inf\{b(H): H \text{ is a vertical halfspace}\}>0.
		\]
		%\item[(ii)] If $\G$ is a free group and $H_1,H_2\subseteq \G$ are vertical halfspaces in $\G$, then $b(H_1)=b(H_2)$.
	
\end{proposition}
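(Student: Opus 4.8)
The plan is to reduce everything to a single vertical halfspace, say $H_0 = H_{\nu_0}$ for a fixed $\nu_0 \in \g_1 \setminus \{0\}$, and then to show two things: first, that $b(H) = b(H_0)$ for every vertical halfspace $H$, so that the infimum over $H$ equals $b(H_0)$; second, that $b(H_0) > 0$. For the first point I would use the fact that any two vertical halfspaces are carried to one another by a left translation composed with a dilation, together with the automorphism $\delta^*_\lambda$ acting on $\g_1$: given $H_\nu$, there is a left translation $\tau_p$ and a rotation-type linear isometry of $\g_1$ (not available in general!) — so actually the cleaner route is to observe that the kernel $K$ is \emph{radial}, hence invariant under the inversion and under left translations in the sense needed, and that $J^1_\varepsilon(\cdot; B)$ is left-translation invariant up to moving the ball; since $B = B(0,1)$ is itself translation-sensitive, one instead argues that $b(H_\nu)$ does not depend on $\nu$ because one may rewrite the recovery sequences using the group structure. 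If a clean invariance is not available, the safe fallback is to prove directly that $b(H) \geq c > 0$ for \emph{every} vertical halfspace with the same constant $c$, bypassing the need for equality; this is in fact all the statement requires.

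So the core of the argument is the lower bound: there exists $c > 0$, independent of $H$, such that for every family $(E_\varepsilon)$ with $E_\varepsilon \to H$ in $L^1(B)$ one has $\liminf_{\varepsilon \to 0} \frac{1}{2\varepsilon} J^1_\varepsilon(E_\varepsilon; B) \geq c$. The key step is a lower bound on $J^1_\varepsilon$ in terms of the $L^1$-norm of a suitable finite difference. Writing out $\frac12 J^1_\varepsilon(E_\varepsilon; B) = L_{K_\varepsilon}(E_\varepsilon^c \cap B, E_\varepsilon \cap B)$ and changing variables $\xi = \delta_{1/\varepsilon}(y^{-1}x)$, one gets
\[
\frac{1}{2\varepsilon} J^1_\varepsilon(E_\varepsilon; B) = \frac{1}{2\varepsilon}\int_{\G} \widetilde K(\|\xi\|)\, \frac{1}{\varepsilon^Q}\, \mu\big(\{(x,y) \in (B\times B): x \in E_\varepsilon,\ y \notin E_\varepsilon,\ y^{-1}x = \delta_\varepsilon \xi\}\big)\, d\xi,
\]
and the inner quantity, for $\xi$ in a fixed compact annulus, say $1 \le \|\xi\| \le 2$, is comparable to $\varepsilon^{-Q}$ times the measure of the set of $y \in B'$ (a slightly smaller ball) with $y \in E_\varepsilon$, $y\delta_\varepsilon\xi \notin E_\varepsilon$ — i.e. to a finite-difference quantity $\int_{B'} |\chi_{E_\varepsilon}(y\delta_\varepsilon\xi) - \chi_{E_\varepsilon}(y)|\,dy$ up to boundary effects. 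Using the hypothesis $\inf_{r>1}\widetilde K(r) r^{Q+1} > 0$, one has $\widetilde K(\|\xi\|) \geq c_0 \|\xi\|^{-Q-1} \geq c_0 2^{-Q-1}$ on that annulus, so
\[
\frac{1}{2\varepsilon} J^1_\varepsilon(E_\varepsilon; B) \geq \frac{c_1}{\varepsilon} \int_{\{1 \le \|\xi\| \le 2\}} \Big( \int_{B'} |\chi_{E_\varepsilon}(y\delta_\varepsilon\xi) - \chi_{E_\varepsilon}(y)|\, dy \Big)\, d\xi - (\text{boundary error}).
\]
Now since $\|\delta_\varepsilon\xi\| = \varepsilon\|\xi\| \in [\varepsilon, 2\varepsilon]$, the displacement $\delta_\varepsilon\xi$ has CC-length of order $\varepsilon$, so $\frac{1}{\varepsilon}\int_{B'}|\chi_{E_\varepsilon}(y\delta_\varepsilon\xi)-\chi_{E_\varepsilon}(y)|\,dy$ is, in the limit, a lower bound for the directional total variation of $\chi_H$ tested against the horizontal direction determined by $\pi_1\log\xi$. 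Passing to the liminf and using $L^1$-lower semicontinuity of the perimeter (and that $E_\varepsilon \to H$), one gets that the liminf is bounded below by $c_1 \int_{\{1\le\|\xi\|\le2\}} |D_{X}\chi_H(B')\cdot(\text{direction of }\xi)|\,d\xi$, which is a strictly positive multiple of $P_\G(H; B')$, hence of a universal constant times $\mathscr H^{Q-1}$ of the vertical hyperplane through $B'$ — and this is bounded below independently of $\nu$ because all vertical hyperplanes through a fixed ball have comparable $(Q-1)$-perimeter (by homogeneity and the fact that $\g_1$-directions form a compact set of unit vectors).

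The main obstacle, and the step requiring the most care, is making the change of variables and the annulus localization rigorous: one must control the ``boundary error'' coming from the fact that when $y \in B'$ and we translate by $\delta_\varepsilon\xi$, the point $y\delta_\varepsilon\xi$ may leave $B$ — but since $\|\delta_\varepsilon\xi\| \le 2\varepsilon \to 0$, this error affects only a $2\varepsilon$-neighborhood of $\partial B$ and is therefore negligible after dividing by $\varepsilon$ and taking $\varepsilon \to 0$; still, one needs $E_\varepsilon \subseteq B$ or at least $E_\varepsilon \to H$ controlled near $\partial B$, which holds by hypothesis. A secondary subtlety is that $\chi_H$ is a priori only in $L^1$, not $BV$ locally — but $H$ is a vertical halfspace, so $\chi_H$ is manifestly of locally finite perimeter with $P_\G(H; B') > 0$, and the directional-derivative lower bound can be made explicit. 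One then concludes by taking the infimum over all vertical halfspaces $H$, noting the lower bound $c$ was uniform in $\nu_0$, so $\inf_H b(H) \geq c > 0$.
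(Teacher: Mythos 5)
Your route is genuinely different from the paper's, and while the idea is plausible, the decisive step is asserted rather than established. After the change of variables $\xi=\delta_{1/\varepsilon}(y^{-1}x)$, what you actually obtain is the one-sided quantity $\int_{B'}\chi_{E_\varepsilon}(y\delta_\varepsilon\xi)\chi_{E_\varepsilon^c}(y)\,dy$, not the symmetric finite difference $\int_{B'}|\chi_{E_\varepsilon}(y\delta_\varepsilon\xi)-\chi_{E_\varepsilon}(y)|\,dy$; this is fixable by symmetrizing in $\xi\leftrightarrow\xi^{-1}$, but must be done. Much more importantly, the inequality
\begin{equation*}
\liminf_{\varepsilon\to 0}\frac{1}{\varepsilon}\int_{B'}\bigl|\chi_{E_\varepsilon}(y\delta_\varepsilon\xi)-\chi_{E_\varepsilon}(y)\bigr|\,dy \;\geq\; |\langle\pi_1\log\xi,\nu\rangle|\,P_\G(H_\nu;B')
\end{equation*}
is not a plain application of ``$L^1$-lower semicontinuity of the perimeter'': the displacement $\delta_\varepsilon\xi$ varies with $\varepsilon$, and in a Carnot group it is not a shift along a fixed horizontal direction --- the components of $\log\xi$ in $\g_2,\ldots,\g_s$ scale as $\varepsilon^2,\ldots,\varepsilon^s$, so their CC-length is of order $\varepsilon$, comparable to the horizontal part. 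One can prove the estimate by showing that $y\mapsto \frac{1}{\varepsilon}\bigl(\chi_{E_\varepsilon}(y\delta_\varepsilon\xi)-\chi_{E_\varepsilon}(y)\bigr)$ converges distributionally to the horizontal derivative of $\chi_{H_\nu}$ in the direction $\pi_1\log\xi$, and then invoking lower semicontinuity of the total variation along weak-$*$ convergent subsequences; but this requires verifying that the higher-layer corrections contribute $O(\varepsilon)$ when tested against smooth $\phi$, which you do not carry out. Quantitative estimates of exactly this flavor are what a D\'avila-type theorem in Carnot groups would require, and the paper's introduction flags that as open.

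The paper's actual proof is much more elementary and decouples positivity from uniformity. For a fixed $H$, it takes a near-optimal recovery family $(E_\varepsilon)$, applies Severini--Egorov to find an open $A\subseteq B$ on which $\chi_{E_\varepsilon}$ coincides with $\chi_H$ for small $\varepsilon$, and thus extracts two \emph{fixed} subsets $C^+\subseteq E_\varepsilon$, $C^-\subseteq E_\varepsilon^c$ of positive measure, independent of $\varepsilon$. Rescaling by $\delta_{1/\varepsilon}$, bounding the kernel below by $\widetilde K$ evaluated at the diameter, and then using \eqref{eq:infcappa} gives the strictly positive, $\varepsilon$-independent bound $b(H)\geq c_0\,\mathrm{diam}(C^+\cup C^-)^{-(Q+1)}\mathscr H^Q(C^+)\mathscr H^Q(C^-)$; no blow-up and no $BV$ machinery are used. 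Uniformity over halfspaces is obtained separately, by showing $\nu\mapsto b(H_\nu)$ is lower semicontinuous on the compact sphere $\mathbb S^{m-1}$ and letting it attain its minimum. Your route, if completed, would yield more --- namely an explicit lower bound for $b(H_\nu)$ in terms of $P_\G(H_\nu;B')$, which the paper's argument never produces --- but the finite-difference liminf estimate needs to be proved, not asserted.
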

\begin{proof}
	 Fix a halfspace $H$. We first prove that $b(H)>0$. By definition of $b(H)$ and a diagonal argument, there exists a family $\chi_{E_{\varepsilon}}$ that converges to $\chi_H$ in $L^1(B)$ as $\varepsilon\to 0$ such that
	\[
	\liminf_{\varepsilon\to 0}\frac{1}{2\varepsilon}J^1_\varepsilon(E_\varepsilon;B)=b(H).
	\]
	%Using the definition of $b(H)$, we have that $\delta_{1/\varepsilon}E_\varepsilon\rightarrow H$ and $\delta_{1/\varepsilon}E^c_\varepsilon\rightarrow H^c$ in $L^1(B(0,1))$. 
	Thanks to Severini-Egorov's Theorem there exists an open set $A\subseteq B$ such that 
	\begin{equation}\label{eq:Egorov}
	\mathscr H^Q(B\setminus A)<\frac{\mathscr H^Q(H\cap B)}{2}
	\end{equation}
	 and $\chi_{E_\varepsilon}$ converges to $\chi_H$ uniformly on $A$, as $\varepsilon\to 0$. We therefore find $\varepsilon_0$ such that 
	\[
	\sup_{x\in A}|\chi_{E_\varepsilon}(x)-\chi_H(x)|<1,\quad \forall\varepsilon\leq\varepsilon_0,
	\]
	and hence, for every $\varepsilon\leq \varepsilon_0$ we have $E_\varepsilon\cap A=H\cap A\eqqcolon C^+$. By reasoning in the same way on $E_\varepsilon^c$, we may assume without loss of generality that, for every $\varepsilon\leq \varepsilon_0$, we also have $E_\varepsilon^c\cap A=H^c\cap A\eqqcolon C^-$.
	Notice that, by \eqref{eq:Egorov}, we have 
	\begin{equation}\label{eq:posmin}
	\min\{\mathscr H^Q(C^+),\mathscr H^Q( C^-)\}>0.
	\end{equation}
	For every $\varepsilon\leq \varepsilon_0$, we have
	\[
	\begin{aligned}
	\frac 1{2\varepsilon}J_\varepsilon^1(E_\varepsilon;B) &= \frac 1{\varepsilon} \int_{E_\varepsilon}\int_{E_\varepsilon^c\cap B}K_\varepsilon(y^{-1}x)\,dydx\geq \varepsilon^{Q-1} \int_{\delta{1/\varepsilon}C^+}\int_{\delta{1/\varepsilon}C^-}K(y^{-1}x)\,dydx \\
	&\geq \varepsilon^{Q-1}\widetilde K(\mathrm{diam}(\delta_{1/\varepsilon}C^+\cup \delta_{1/\varepsilon}C^-))\mathscr H^Q(\delta_{1/\varepsilon}C^+)\mathscr H^Q(\delta_{1/\varepsilon}C^-)\\
	&= \frac 1{\varepsilon^{Q+1}}\widetilde K\left (\frac{\mathrm{diam}(C^+\cup C^-)}{\varepsilon}\right )\mathscr H^Q(C^+)\mathscr H^Q(C^-),
	\end{aligned}
	\]
	which, by \eqref{eq:infcappa} and \eqref{eq:posmin}, is a positive lower bound independent of $\varepsilon$.
	
	To conclude the proof of (i), it is enough to check that $b$ is lower-semicontinuous. In fact, if this were true, by the compactness of the sphere $\mathbb S^{m-1}$, we would have that $b$ admits a minimum, that, by the previous step would be strictly positive.\\
Let $\nu_{\eta}\in \mathbb{S}^{m-1}$ such that $\nu_{\eta}\to \nu$ as $\eta\to 0$ and let $H_{\eta}$ be the family of vertical halfspace associated to $\nu_{\eta}$. Then $\chi_{H_{\nu_\eta}}\to \chi_{H_{\nu}}$ in $L^1(B)$ as $\eta\to 0$.

	%Let $H_\eta$ be a family of vertical halfspaces that converges to $H$ in $L^1(B)$, as $\eta\to 0$. 
	Fix $\sigma>0$. For every $\eta>0$ we can find $F_\varepsilon^\eta$ converging to $H_\eta$ in $L^1(B)$, as $\varepsilon\to 0$ such that
	\[
	\liminf_{\varepsilon\to 0}\frac 1{2\varepsilon}J^1_\varepsilon (F_\varepsilon^\eta;B)\leq b(H_\eta)+\sigma.
	\]
	Considering $E_\varepsilon\coloneqq F_\varepsilon^\varepsilon$, we easily find that $E_\varepsilon\to H$ in $L^1(B)$, as $\varepsilon\to 0$ and hence
	\[
	b(H)\leq \liminf_{\varepsilon\to 0}\frac 1{2\varepsilon}J^1_\varepsilon (E_\varepsilon;B)\leq \liminf_{\varepsilon \to0} b(H_\varepsilon)+\sigma.
	\]
	The thesis follows by the arbitrariness of $\sigma$.
\end{proof}
	
	\begin{remark} \label{rem:libero}
If $\G$ is a free Carnot group (we refer to \cite[p.\ 45]{VSCC} or \cite[p.\ 174]{Varadarajan} for the definition) and $K$ is radial, then, if $H_1,H_2\subseteq \G$ are vertical halfspaces in $\G$, one has $b(H_1)=b(H_2)$. Indeed,	let $\nu_1,\nu_2\in\g_1 \setminus \{0\}$ such that $H_1=H_{\nu_1}$ and $H_2=H_{\nu_2}$. It is enough to show that $b(H_1)\leq b(H_2)$. Let $E_\varepsilon^2$ be a family of measurable sets in $B$ such that $E_\varepsilon^2\to H_{\nu_2}$ in $L^1(B)$ as $\varepsilon\to 0$. Now consider an orthogonal isomorphism $T\colon\g_1\to\g_1$ such that $T(\nu_2)=\nu_1$. Since $\G$ is free, the map $T$ extends in a unique way to a Lie algebra isomorphism $T\colon\g\rightarrow\g$ that induces an isometry $I\colon\G\rightarrow \G$ defined by
	\[
	I\coloneqq\exp\circ T\circ\log.
	\] 
	We claim that $I(H_2)=H_1$. Indeed, for every $\xi\in \G$, one has 
	\[
	\begin{aligned}
	 \langle\pi_1\log \xi, \nu_1 \rangle =&\langle\pi_1\log \xi, T(\nu_2) \rangle
	=\langle T(\pi_1\log \xi), \nu_2 \rangle\\=&\langle\pi_1 T(\log \xi), \nu_2 \rangle=\langle\pi_1 \log I(\xi), \nu_2 \rangle.
	\end{aligned}
	\]
	Since $K$ is radial and $I$ is an isometry, it is easy to see that $J^1(A;B)=J^1(I(A);I(B))$. By noticing that $I(B)=B$ and that $I(E_\varepsilon^2)\to H_1$ in $L^1(B)$ as $\varepsilon\to 0$, we have that
	\[
	b(H_1)\leq \liminf_{\varepsilon\to 0}\frac 1{2\varepsilon}J^1_\varepsilon(I(E_\varepsilon^2);B)=\liminf_{\varepsilon\to0}\frac 1{2\varepsilon}J^1_\varepsilon(E_\varepsilon^2;B),
	\]
	whence $b(H_1)\leq b(H_2)$.
\end{remark}

	\begin{remark}\label{rem:rettificabilita}
	%If $\G$ is a Carnot group and $E$ is a set of locally finite perimeter in $\Omega\subseteq \G$, then, by \cite{Ambrosio02}, there exists a positive density $\zeta_E\in L^1(\mathcal FE;\mathscr S^{Q-1})$ such that
	%\[
	%P_E=\zeta_E\mathscr S^{Q-1}\res \mathcal FE.
	%\]
	Let $\G$ be a Carnot group satisfying property $\mathcal R$ and let $E$ be a set of locally finite perimeter in some open set $\Omega\subseteq \G$. Then, by \cite[Lemma 3.8]{FSSC03}, if $\G$ satisfies property $\mathcal R$, for every $p\in \mathcal FE$ one has 
	\begin{equation}\label{eq:densityper}
	\lim_{r\to0}\frac{P_\G(E;B(p,r))}{r^{Q-1}}=P_\G(H_{\nu_E(p)}; B(0,1))\eqqcolon\vartheta(\nu_E(p)).
	\end{equation}
	Notice also that, since $H_\nu$ has smooth boundary for any $\nu \in \g$, its perimeter can be explicitly computed (up to identification of $\G$ with $\R^n$ by means of exponential coordinates) to get 
	\begin{equation}
	\label{eq:varteta}
	\vartheta(\nu) =\mathscr H_e^{n-1}(\partial H_\nu \cap B(0,1)),
	\end{equation}
	where $\mathscr H_e^{n-1}$ denotes the $(n-1)$-dimensional Hausdorff measure with respect to the Euclidean metric (see e.g.\ \cite[Theorem 5.1.3]{Monti} and \cite[Proposition 2.22]{FSSC03}).
	\end{remark}

\begin{theorem}\label{th:gammaliminf}
	Let $\G$ be a Carnot group satisfying property $\mathcal R$, let $\Omega\subseteq \G$ be open and bounded and let $K\colon \G\to [0,+\infty)$ be a radial decreasing kernel satisfying \eqref{eq:nonnegativekernel}, \eqref{eq:evenkernel} \eqref{eq:stimaL1} and \eqref{eq:infcappa}.
	Then, there exists $\rho\colon \g_1\to (0,+\infty)$ such that, for every family $(E_\varepsilon)$ of measurable sets converging in $L^1(\Omega)$ to $E\subseteq \Omega$, one has
	\begin{equation}\label{eq:gammaliminf}
	\int_\Omega \rho(\nu_E) \,dP_\G(E;\cdot)\leq \liminf_{\varepsilon\to0} \frac{1}{\varepsilon}P_\varepsilon(E_\varepsilon;\Omega).
	\end{equation}
	More precisely, for every $\nu\in \g_1$, the function $\rho$ can be represented as:
	\[
	\rho(\nu)=\frac{b(H_\nu)}{\vartheta(\nu)},
	\]
	where $b$ and $\vartheta$ are respectively defined as in \eqref{eq:biacca} and \eqref{eq:densityper}.
\end{theorem}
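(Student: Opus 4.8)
The plan is to follow the blow-up/localization strategy à la Fonseca–Müller, adapted to the sub-Riemannian setting along the lines of \cite[Section 3.3]{BerendsenPagliari}. First I would dispose of the trivial case: if $\liminf_{\varepsilon\to 0}\frac1\varepsilon P_\varepsilon(E_\varepsilon;\Omega)=+\infty$ there is nothing to prove, so I pass to a subsequence (not relabeled) realizing the liminf as a finite limit, and keep only indices with $\frac1\varepsilon P_\varepsilon(E_\varepsilon;\Omega)\le C$. By Theorem \ref{th:compactness} the limit set $E$ has finite perimeter in $\Omega$ (this is consistent with the hypothesis that $E_\varepsilon\to E$ in $L^1(\Omega)$). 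Next I would introduce the sequence of nonnegative Radon measures
\[
\mu_\varepsilon(A)\coloneqq \frac1\varepsilon\, L_{K_\varepsilon}\big(E_\varepsilon^c\cap A,\,E_\varepsilon\cap A\big)=\frac1{2\varepsilon}J^1_{K_\varepsilon}(E_\varepsilon;A),\qquad A\subseteq\Omega \text{ open},
\]
which is the "local part" of $\frac1\varepsilon P_\varepsilon(E_\varepsilon;\cdot)$ and satisfies $\mu_\varepsilon(\Omega)\le C$. Up to a further subsequence $\mu_\varepsilon\overset{*}{\rightharpoonup}\mu$ for some finite Radon measure $\mu$ on $\Omega$, and since $\frac1{2\varepsilon}J^1_{K_\varepsilon}\le\frac1\varepsilon P_\varepsilon$ it suffices to prove $\int_\Omega\rho(\nu_E)\,dP_\G(E;\cdot)\le\mu(\Omega)$, which by the Radon–Nikodym decomposition $\mu=\frac{d\mu}{dP_\G(E;\cdot)}P_\G(E;\cdot)+\mu^s$ (with $\mu^s\ge 0$) reduces to the pointwise estimate
\[
\frac{d\mu}{dP_\G(E;\cdot)}(p)\ \ge\ \rho(\nu_E(p))\qquad\text{for }P_\G(E;\cdot)\text{-a.e.\ }p.
\]

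To prove this pointwise bound I would work at a point $p\in\mathcal FE$, which is where property $\mathcal R$ enters: it guarantees (see Remark \ref{rem:rettificabilita} and the discussion after Theorem \ref{th:gammaconvintro}) that the rescalings $\delta_{1/r}(p^{-1}E)$ converge in $L^1_{\mathrm{loc}}$ to the vertical halfspace $H_{\nu_E(p)}$, and that the density \eqref{eq:densityper} exists, $\lim_{r\to0}P_\G(E;B(p,r))/r^{Q-1}=\vartheta(\nu_E(p))$. By the Besicovitch differentiation theorem, for $P_\G(E;\cdot)$-a.e.\ such $p$ one has
\[
\frac{d\mu}{dP_\G(E;\cdot)}(p)=\lim_{r\to0}\frac{\mu(B(p,r))}{P_\G(E;B(p,r))}
=\lim_{r\to0}\frac{1}{\vartheta(\nu_E(p))}\,\frac{\mu(B(p,r))}{r^{Q-1}},
\]
choosing $r$ so that $\mu(\partial B(p,r))=0$. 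Now fix such an $r$ and use weak-$*$ convergence plus left-invariance and the homogeneity of $K_\varepsilon$ under dilations: translating by $p^{-1}$ and rescaling by $\delta_{1/r}$, the quantity $\frac1{r^{Q-1}}\mu_\varepsilon(B(p,r))$ becomes $\frac1{2(\varepsilon/r)}J^1_{K_{\varepsilon/r}}\big(\delta_{1/r}(p^{-1}E_\varepsilon);B(0,1)\big)$. Since $\delta_{1/r}(p^{-1}E_\varepsilon)\to\delta_{1/r}(p^{-1}E)$ in $L^1(B(0,1))$ as $\varepsilon\to0$, and $\delta_{1/r}(p^{-1}E)\to H_{\nu_E(p)}$ in $L^1(B(0,1))$ as $r\to0$, a diagonal extraction produces a family converging to $H_{\nu_E(p)}$ in $L^1(B(0,1))$ whose energy is bounded below, in the limit, by $b(H_{\nu_E(p)})$ by the very definition \eqref{eq:biacca} of $b$. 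Carefully ordering the limits (first $\varepsilon\to0$, then $r\to0$, with the diagonal argument to combine the two perturbations $\varepsilon/r\to0$) yields
\[
\liminf_{r\to0}\frac{\mu(B(p,r))}{r^{Q-1}}\ \ge\ b(H_{\nu_E(p)}),
\]
and dividing by $\vartheta(\nu_E(p))$ gives exactly $\frac{d\mu}{dP_\G(E;\cdot)}(p)\ge \rho(\nu_E(p))=b(H_{\nu_E(p)})/\vartheta(\nu_E(p))$, which is strictly positive by Proposition \ref{prop:biacca}. Integrating this pointwise inequality against $P_\G(E;\cdot)$ over $\Omega$ and discarding the nonnegative singular part $\mu^s$ concludes the proof.

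The main obstacle is the last limit interchange: one must pass from the fixed scale $r$ (needed to approximate $\delta_{1/r}(p^{-1}E)$ by the halfspace) to the limit $\varepsilon\to0$ while simultaneously controlling the boundary effects (the requirement $\mu(\partial B(p,r))=0$, which holds for all but countably many $r$) and the change of rescaled parameter $\varepsilon\mapsto\varepsilon/r$ inside $J^1$. A clean way is: for each $r$ in a suitable null-complement set, lower semicontinuity of $A\mapsto\liminf_\varepsilon\mu_\varepsilon(A)$ on open sets gives $\mu(B(p,r))\ge\liminf_\varepsilon\mu_\varepsilon(B(p,r))$; rewriting the right-hand side via dilation/translation and using that the liminf of the rescaled $J^1$-energies over sets converging to $\delta_{1/r}(p^{-1}E)$ dominates $b\big(\,$blow-up of $\delta_{1/r}(p^{-1}E)\,\big)$ — for which one needs a mild continuity/stability property of $b$ of the kind established in Proposition \ref{prop:biacca} (lower semicontinuity of $b$) — and finally letting $r\to0$ so that the blow-up becomes $H_{\nu_E(p)}$. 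All other ingredients (Besicovitch differentiation, weak-$*$ compactness of measures, the structure of $\mathcal FE$ under property $\mathcal R$) are standard or already available in the excerpt.
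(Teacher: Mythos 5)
Your plan follows essentially the same blow-up/localization strategy as the paper's proof: uniform energy bound and compactness give finite perimeter of $E$; a family of Radon measures $\mu_\varepsilon$ with $\|\mu_\varepsilon\|$ controlled by $\frac1\varepsilon P_\varepsilon$ is extracted, with a weak-$*$ limit $\mu$; one differentiates $\mu$ with respect to $P_\G(E;\cdot)$; property $\mathcal R$ supplies the blow-up to a vertical halfspace and the density $\vartheta(\nu_E(p))$; a diagonal choice $\varepsilon_j/r_j\to0$ together with the dilation/translation identity $J^1_{\varepsilon}(E_\varepsilon;B(p,r))=r^Q J^1_{\varepsilon/r}(\delta_{1/r}p^{-1}E_\varepsilon;B)$ and the definition of $b$ gives the pointwise bound. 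The one stylistic difference from the paper is your definition of $\mu_\varepsilon$: you keep only the local part $\frac1{2\varepsilon}J^1_{K_\varepsilon}(E_\varepsilon;\cdot)$, whereas the paper builds a density $f_\varepsilon$ encoding the whole of $\frac1\varepsilon P_\varepsilon$ and only later discards the non-local terms in the pointwise estimate; both work, and yours is marginally leaner.

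There is one genuine gap to repair. You invoke ``the Besicovitch differentiation theorem'' to write $\frac{d\mu}{dP_\G(E;\cdot)}(p)=\lim_{r\to0}\mu(B(p,r))/P_\G(E;B(p,r))$. In a Carnot group the Carnot--Carath\'eodory balls do \emph{not} in general satisfy the Besicovitch covering property (this already fails in the Heisenberg group), so the Euclidean differentiation theorem for arbitrary pairs of Radon measures is unavailable as stated. What makes the differentiation step legitimate here is the fact that the perimeter measure $P_\G(E;\cdot)$ is \emph{asymptotically doubling}, by the result of Ambrosio cited as \cite{Ambrosio02}, and then one appeals to Federer's differentiation theorem \cite[Theorem 2.8.17]{Federer}, which works under the asymptotic doubling assumption. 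This is precisely how the paper justifies that step, and it is a point you need to state explicitly, since without it the passage from $\mu$ to the pointwise Radon--Nikodym inequality is unsupported. The rest of your argument (the countable set $Z$ of bad radii, the diagonal extraction producing $\varepsilon_j/r_j\to0$, the lower bound by $b(H_{\nu_E(p)})$) is carried out in the paper exactly as you outline.
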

\begin{proof}
	Fix $\varepsilon>0$. We define the function
	\[
	f_\varepsilon(\xi)\coloneqq\begin{cases}
	\displaystyle\frac {1}{2\varepsilon}\int_{E_\varepsilon^c\cap \Omega}K_\varepsilon(\eta^{-1}\xi)\,d\eta +\frac 1\varepsilon\int_{\Omega^c\cap E_\varepsilon^c} K(\eta^{-1}\xi)\,d\eta, & \text{if $\xi\in E_\varepsilon$}\\
\displaystyle	\frac 1{2\varepsilon}\int_{E_\varepsilon\cap \Omega}K_\varepsilon(\eta^{-1}\xi)\,d\eta, & \text{if $\xi\in E_\varepsilon^c$,}
	\end{cases}
	\]
	and set $\mu_\varepsilon\coloneqq f_\varepsilon \mathcal H^Q\res\Omega$. Notice that
	\[
	\|\mu_\varepsilon\|\coloneqq\mu_\varepsilon(\Omega)=\frac 1{\varepsilon}P_\varepsilon(E_\varepsilon;\Omega).
	\]
	Without loss of generality we can assume that there exists $M>0$ such that
	\[
	\frac 1{\varepsilon}P_\varepsilon(E_\varepsilon;\Omega)\leq M, \quad\forall \varepsilon>0.
	\]
	By this uniform bound and the assumptions on $\Omega$, we get that, by Theorem \ref{th:compactness}, $E$ has finite perimeter in $\Omega$. We set for shortness $P_{E}\coloneqq P_{\G}(E;\cdot)$. Moreover, thanks to the weak* compactness of measures, we can find a positive measure $\nu$ such that $\mu_\varepsilon\rightharpoonup^\ast\mu$ as $\varepsilon\to0$ up to subsequences, and hence
	\[
	\|\mu\|\leq \liminf_{\varepsilon\to0}\|\mu_\varepsilon\|.
	\]
	To prove \eqref{eq:gammaliminf}, it is enough to show that
	\[
	\|\mu\|\geq \int_\Omega \rho(\nu_E) \, dP_{E},
	\]
	for some $\rho\colon \g_1\to (0,+\infty)$ that will be determined in the sequel. Notice that, since by \cite{Ambrosio02} the perimeter measure is asymptotically doubling, we are allowed to differentiate $\mu$ with respect to the perimeter $P_{E}$, see \cite[Theorem 2.8.17]{Federer}.  We then aim to prove that
	\[
	\frac{d\mu}{dP_{E}}(p)\geq \rho(\nu_E(p)), \quad \text{for $P_{E}$-a.e. $p\in \Omega$}, 
	\]
	where$\frac{d\mu}{dP_{E}}(p)$ denotes the Radon-Nikodym derivative of $\mu$ with respect to $P_{E}$.
	Fix $p\in \mathcal FE \cap \Omega$. Since $\G$ satisfies property $\mathcal R$, by \eqref{eq:densityper} we have 
	\[
	\frac{d\mu}{dP_{E}}(p)=\lim_{r\to0}\frac{\mu(B(p,r)}{P_E(B(p,r))}=\frac 1{\vartheta(\nu_E(p))}\lim_{r\to 0}\frac{\mu(B(p,r))}{r^{Q-1}}.
	\]
	Since $\mu_\varepsilon$ weakly$^*$ converges to $\mu$ as $\varepsilon\to 0$, we have that $\mu_\varepsilon(B(p,r))$ converges to $\mu(B(p,r))$ for every $r>0$ outside a countable subset $Z\subseteq (0,+\infty)$ of radii. We therefore have
	\[
	\frac{d\mu}{dP_{E}}(p)=\frac 1{\vartheta(\nu_E(p))}\lim_{r\to 0, r\notin Z}\left ( \lim_{\varepsilon \to 0} \frac {\mu_\varepsilon (B(p,r))}{r^{Q-1}}\right ).
	\]
	By a diagonal argument, we may choose two infinitesimal sequences $(\varepsilon_j)$ and $(r_j)$ such that
	\[
	\lim_j\frac{\varepsilon_j}{r_j}=0,
	\]
	and so that
	\[
	\frac{d\mu}{dP_{E}}(p)=\frac 1{\vartheta(\nu_E(p))}\lim_j\frac{\mu_{\varepsilon_j}(B(p,r_j))}{r_j^{Q-1}}.
	\]
	By making the computation explicit, we can write
	\[
	\begin{aligned}
	\frac{d\mu}{dP_{E}}(p)=\frac 1{\vartheta(\nu_E(p))}\lim_j\frac 1{\varepsilon_j r_j^{Q-1}}\Bigg (\frac 12& \int_{E_{\varepsilon_j}\cap\Omega\cap B(p,r_j)}\int_{E_{\varepsilon_j}^c\cap \Omega} K_{\varepsilon_j}(y^{-1}x)\,dydx\\
	+\frac 12&\int_{E_{\varepsilon_j}^c\cap\Omega\cap B(p,r_j)}\int_{E_{\varepsilon_j}\cap \Omega} K_{\varepsilon_j}(y^{-1}x)\,dydx \\
	+&\int_{E_{\varepsilon_j}\cap\Omega\cap B(p,r_j)}\int_{\Omega^c\cap E_\varepsilon^c} K_{\varepsilon_j}(y^{-1}x)\,dy dx \Bigg ),	
	\end{aligned}
	\]
	and hence, since $P_\varepsilon=J_\varepsilon\geq \frac 12J^1_\varepsilon$ and since, for $j$ sufficiently large, one has $B(p,r_j)\subseteq \Omega$, we get 
	\[
	\begin{aligned}
	\frac{d\mu}{dP_{E}}(p)&\geq \frac 1{\vartheta(\nu_E(p))} \liminf_j \frac 1{2\varepsilon_j r_j^{Q-1}}J_{\varepsilon_j}^1(E_{\varepsilon_j}; B(p,r_j)\cap \Omega)\\&=\frac 1{\vartheta(\nu_E(p))}\liminf_j \frac 1{2\varepsilon_j r_j^{Q-1}}J_{\varepsilon_j}^1(E_{\varepsilon_j}; B(p,r_j)).
	\end{aligned}
	\]
	By a change of variable, since $J^1$ is left unchanged by isometries, we have
	\[
	J^1_{\varepsilon_j}(E_{\varepsilon_j}; B(p,r_j))=r_j^QJ^1_{{\varepsilon_j}/{r_j}}\left (\delta_{1/{r_j}}p^{-1}E_{\varepsilon_j}; B\right ).
	\]
	This implies that
	\[
	\frac{d\mu}{dP_{E}}(p)\geq\frac 1{\vartheta(\nu_E(p))}\liminf_j\frac{r_j}{2\varepsilon_j}J^1_{{\varepsilon_j}{r_j}}\left (\delta_{1/{r_j}}p^{-1}E_{\varepsilon_j}; B\right ).
	\]
	Since, by property $\mathcal R$, the sequence $\delta_{1/{\varepsilon_j}}p^{-1}E_{\varepsilon_j}$ converges to $H_{\nu_E(p)}$ in $L^1(B)$ as $j\to\infty$ we get 
	\[
	\frac{d\mu}{dP_{E}}(p)\geq \frac 1{\vartheta(\nu_E(p))}\, b(H_{\nu_E(p)}). \qedhere
	\]
\end{proof}

\bibliographystyle{acm}
\bibliography{mini3}

\begin{thebibliography}{10}

\bibitem{Ambrosio02}
{\sc Ambrosio, L.}
\newblock Fine properties of sets of finite perimeter in doubling metric
  measure spaces.
\newblock {\em Set-Valued Anal. 10}, 2-3 (2002), 111--128.
\newblock Calculus of variations, nonsmooth analysis and related topics.

\bibitem{ADPM}
{\sc Ambrosio, L., De~Philippis, G., and Martinazzi, L.}
\newblock Gamma-convergence of nonlocal perimeter functionals.
\newblock {\em Manuscripta Math. 134}, 3-4 (2011), 377--403.

\bibitem{AFP}
{\sc Ambrosio, L., Fusco, N., and Pallara, D.}
\newblock {\em Functions of bounded variation and free discontinuity problems}.
\newblock Oxford Mathematical Monographs. The Clarendon Press, Oxford
  University Press, New York, 2000.

\bibitem{ALDK}
{\sc Ambrosio, L., Kleiner, B., and Le~Donne, E.}
\newblock Rectifiability of sets of finite perimeter in {C}arnot groups:
  existence of a tangent hyperplane.
\newblock {\em J. Geom. Anal. 19}, 3 (2009), 509--540.

\bibitem{BerendsenPagliari}
{\sc Berendsen, J., and Pagliari, V.}
\newblock On the asymptotic behaviour of nonlocal perimeters.
\newblock {\em ESAIM Control Optim. Calc. Var. 25\/} (2019), Art. 48, 27.

\bibitem{BonLanUgu}
{\sc Bonfiglioli, A., Lanconelli, E., and Uguzzoni, F.}
\newblock {\em Stratified {L}ie groups and potential theory for their
  sub-{L}aplacians}.
\newblock Springer Monographs in Mathematics. Springer, Berlin, 2007.

\bibitem{Braides}
{\sc Braides, A.}
\newblock {\em {$\Gamma$}-convergence for beginners}, vol.~22 of {\em Oxford
  Lecture Series in Mathematics and its Applications}.
\newblock Oxford University Press, Oxford, 2002.

\bibitem{Cabre}
{\sc Cabr\'{e}, X.}
\newblock Calibrations and null-{L}agrangians for nonlocal perimeters and an
  application to viscosity theory.
\newblock Preprint, available at https://arxiv.org/pdf/1905.10790.pdf.

\bibitem{CafRoqSav}
{\sc Caffarelli, L., Roquejoffre, J.-M., and Savin, O.}
\newblock Nonlocal minimal surfaces.
\newblock {\em Comm. Pure Appl. Math. 63}, 9 (2010), 1111--1144.

\bibitem{CafSil}
{\sc Caffarelli, L., and Silvestre, L.}
\newblock An extension problem related to the fractional {L}aplacian.
\newblock {\em Comm. Partial Differential Equations 32}, 7-9 (2007),
  1245--1260.

\bibitem{CDPT}
{\sc Capogna, L., Danielli, D., Pauls, S.~D., and Tyson, J.~T.}
\newblock {\em An introduction to the {H}eisenberg group and the
  sub-{R}iemannian isoperimetric problem}, vol.~259 of {\em Progress in
  Mathematics}.
\newblock Birkh\"{a}user Verlag, Basel, 2007.

\bibitem{CinSerVal}
{\sc Cinti, E., Serra, J., and Valdinoci, E.}
\newblock Quantitative flatness results and {$BV$}-estimates for stable
  nonlocal minimal surfaces.
\newblock {\em J. Differential Geom. 112}, 3 (2019), 447--504.

\bibitem{CitManSar}
{\sc Citti, G., Manfredini, M., and Sarti, A.}
\newblock Neuronal oscillations in the visual cortex: {$\Gamma$}-convergence to
  the {R}iemannian {M}umford-{S}hah functional.
\newblock {\em SIAM J. Math. Anal. 35}, 6 (2004), 1394--1419.

\bibitem{Davila}
{\sc D\'{a}vila, J.}
\newblock On an open question about functions of bounded variation.
\newblock {\em Calc. Var. Partial Differential Equations 15}, 4 (2002),
  519--527.

\bibitem{Deg55}
{\sc De~Giorgi, E.}
\newblock Nuovi teoremi relativi alle misure {$(r-1)$}-dimensionali in uno
  spazio ad {$r$} dimensioni.
\newblock {\em Ricerche Mat. 4\/} (1955), 95--113.

\bibitem{DeGiorgiDalMaso}
{\sc De~Giorgi, E., and Dal~Maso, G.}
\newblock {$\Gamma $}-convergence and calculus of variations.
\newblock In {\em Mathematical theories of optimization ({G}enova, 1981)},
  vol.~979 of {\em Lecture Notes in Math.} Springer, Berlin, 1983,
  pp.~121--143.

\bibitem{DNPV}
{\sc Di~Nezza, E., Palatucci, G., and Valdinoci, E.}
\newblock Hitchhiker's guide to the fractional {S}obolev spaces.
\newblock {\em Bull. Sci. Math. 136}, 5 (2012), 521--573.

\bibitem{Dipierro}
{\sc Dipierro, S.}
\newblock A comparison between the nonlocal and the classical worlds: minimal
  surfaces, phase transitions, and geometric flows.
\newblock Preprint, available at https://arxiv.org/abs/2003.13234.

\bibitem{DFPV}
{\sc Dipierro, S., Figalli, A., Palatucci, G., and Valdinoci, E.}
\newblock Asymptotics of the {$s$}-perimeter as {$s\searrow0$}.
\newblock {\em Discrete Contin. Dyn. Syst. 33}, 7 (2013), 2777--2790.

\bibitem{DLDMV}
{\sc Don, S., Le~Donne, E., Moisala, T., and Vittone, D.}
\newblock A rectifiability result for finite-perimeter sets in {C}arnot groups.
\newblock Preprint, available at https://arxiv.org/abs/1912.00493.

\bibitem{DonVittone}
{\sc Don, S., and Vittone, D.}
\newblock Fine properties of functions with bounded variation in
  {C}arnot-{C}arath\'{e}odory spaces.
\newblock {\em J. Math. Anal. Appl. 479}, 1 (2019), 482--530.

\bibitem{EG}
{\sc Evans, L.~C., and Gariepy, R.~F.}
\newblock {\em Measure theory and fine properties of functions}, revised~ed.
\newblock Textbooks in Mathematics. CRC Press, Boca Raton, FL, 2015.

\bibitem{Federer}
{\sc Federer, H.}
\newblock {\em Geometric measure theory}.
\newblock Die Grundlehren der mathematischen Wissenschaften, Band 153.
  Springer-Verlag New York Inc., New York, 1969.

\bibitem{FerrariFranchi}
{\sc Ferrari, F., and Franchi, B.}
\newblock Harnack inequality for fractional sub-{L}aplacians in {C}arnot
  groups.
\newblock {\em Math. Z. 279}, 1-2 (2015), 435--458.

\bibitem{FMPPS}
{\sc Ferrari, F., Miranda, Jr., M., Pallara, D., Pinamonti, A., and Sire, Y.}
\newblock Fractional {L}aplacians, perimeters and heat semigroups in {C}arnot
  groups.
\newblock {\em Discrete Contin. Dyn. Syst. Ser. S 11}, 3 (2018), 477--491.

\bibitem{Folland}
{\sc Folland, G.~B.}
\newblock A fundamental solution for a subelliptic operator.
\newblock {\em Bull. Amer. Math. Soc. 79\/} (1973), 373--376.

\bibitem{Folland2}
{\sc Folland, G.~B.}
\newblock Subelliptic estimates and function spaces on nilpotent {L}ie groups.
\newblock {\em Ark. Mat. 13}, 2 (1975), 161--207.

\bibitem{FSSCMeyers}
{\sc Franchi, B., Serapioni, R., and Serra~Cassano, F.}
\newblock Meyers-{S}errin type theorems and relaxation of variational integrals
  depending on vector fields.
\newblock {\em Houston J. Math. 22}, 4 (1996), 859--890.

\bibitem{FSSC01}
{\sc Franchi, B., Serapioni, R., and Serra~Cassano, F.}
\newblock Rectifiability and perimeter in the {H}eisenberg group.
\newblock {\em Math. Ann. 321}, 3 (2001), 479--531.

\bibitem{FSSC03}
{\sc Franchi, B., Serapioni, R., and Serra~Cassano, F.}
\newblock On the structure of finite perimeter sets in step 2 {C}arnot groups.
\newblock {\em J. Geom. Anal. 13}, 3 (2003), 421--466.

\bibitem{Garofalo1}
{\sc Garofalo, N.}
\newblock Some properties of sub-laplaceans.
\newblock {\em Electronic Journal of Differential Equations 25}, 10 (2018),
  103--131.

\bibitem{GaroNhiCPAM}
{\sc Garofalo, N., and Nhieu, D.-M.}
\newblock Isoperimetric and {S}obolev inequalities for
  {C}arnot-{C}arath\'{e}odory spaces and the existence of minimal surfaces.
\newblock {\em Comm. Pure Appl. Math. 49}, 10 (1996), 1081--1144.

\bibitem{HajKos}
{\sc Haj{\l}asz, P., and Koskela, P.}
\newblock Sobolev met {P}oincar\'e.
\newblock {\em Mem. Amer. Math. Soc. 145}, 688 (2000), x+101.

\bibitem{ledonne}
{\sc Le~Donne, E.}
\newblock A primer on {C}arnot groups: homogenous groups,
  {C}arnot-{C}arath\'{e}odory spaces, and regularity of their isometries.
\newblock {\em Anal. Geom. Metr. Spaces 5}, 1 (2017), 116--137.

\bibitem{Lombardini16}
{\sc Lombardini, L.}
\newblock Fractional perimeters from a fractal perspective.
\newblock {\em Adv. Nonlinear Stud. 19}, 1 (2019), 165--196.

\bibitem{MaalaPin}
{\sc Maalaoui, A., and Pinamonti, A.}
\newblock Interpolations and fractional {S}obolev spaces in {C}arnot groups.
\newblock {\em Nonlinear Anal. 179\/} (2019), 91--104.

\bibitem{MPSC1}
{\sc Maione, A., Pinamonti, A., and Serra~Cassano, F.}
\newblock {$\Gamma$}- convergence for functionals depending on vector fields i.
  integral representation and compactness.
\newblock To appear in Journal Mathematique Pure et Appliquees.

\bibitem{MPSC2}
{\sc Maione, A., Pinamonti, A., and Serra~Cassano, F.}
\newblock {$\Gamma$}- convergence for functionals depending on vector fields
  ii. convergence of minimizers.
\newblock Forthcoming.

\bibitem{Marchi}
{\sc Marchi, M.}
\newblock Regularity of sets with constant intrinsic normal in a class of
  {C}arnot groups.
\newblock {\em Ann. Inst. Fourier (Grenoble) 64}, 2 (2014), 429--455.

\bibitem{MRT}
{\sc Maz\'{o}n, J.~M., Rossi, J.~D., and Toledo, J.}
\newblock Nonlocal perimeter, curvature and minimal surfaces for measurable
  sets.
\newblock {\em J. Anal. Math. 138}, 1 (2019), 235--279.

\bibitem{MRT2}
{\sc Maz\'{o}n, J.~M., Rossi, J.~D., and Toledo, J.~J.}
\newblock {\em Nonlocal perimeter, curvature and minimal surfaces for
  measurable sets}.
\newblock Frontiers in Mathematics. Birkh\"{a}user/Springer, Cham, 2019.

\bibitem{mitchell}
{\sc Mitchell, J.}
\newblock On {C}arnot-{C}arath\'{e}odory metrics.
\newblock {\em J. Differential Geom. 21}, 1 (1985), 35--45.

\bibitem{Monti}
{\sc Monti, R.}
\newblock Distances, boundaries and surface measures in
  {C}ar\-not-{C}a\-ra\-th\'eo\-do\-ry spaces, 2001.
\newblock PhD thesis,
  \href{http://cvgmt.sns.it/paper/3706/}{cvgmt.sns.it/paper/3706/}.

\bibitem{pagliari}
{\sc Pagliari, V.}
\newblock Halfspaces minimise nonlocal perimeter: a proof via calibrations.

\bibitem{PSV}
{\sc Pinamonti, A., Squassina, M., and Vecchi, E.}
\newblock Magnetic {BV}-functions and the {B}ourgain-{B}rezis-{M}ironescu
  formula.
\newblock {\em Adv. Calc. Var. 12}, 3 (2019), 225--252.

\bibitem{Sanchez-Calle}
{\sc S\'{a}nchez-Calle, A.}
\newblock Fundamental solutions and geometry of the sum of squares of vector
  fields.
\newblock {\em Invent. Math. 78}, 1 (1984), 143--160.

\bibitem{SV}
{\sc Savin, O., and Valdinoci, E.}
\newblock {$\Gamma$}-convergence for nonlocal phase transitions.
\newblock {\em Ann. Inst. H. Poincar\'{e} Anal. Non Lin\'{e}aire 29}, 4 (2012),
  479--500.

\bibitem{stein}
{\sc Stein, E.~M., and Shakarchi, R.}
\newblock {\em Complex analysis}, vol.~2 of {\em Princeton Lectures in
  Analysis}.
\newblock Princeton University Press, Princeton, NJ, 2003.

\bibitem{Valdinoci}
{\sc Valdinoci, E.}
\newblock A fractional framework for perimeters and phase transitions.
\newblock {\em Milan J. Math. 81}, 1 (2013), 1--23.

\bibitem{Varadarajan}
{\sc Varadarajan, V.~S.}
\newblock {\em Lie groups, {L}ie algebras, and their representations}, vol.~102
  of {\em Graduate Texts in Mathematics}.
\newblock Springer-Verlag, New York, 1984.
\newblock Reprint of the 1974 edition.

\bibitem{VSCC}
{\sc Varopoulos, N.~T., Saloff-Coste, L., and Coulhon, T.}
\newblock {\em Analysis and geometry on groups}, vol.~100 of {\em Cambridge
  Tracts in Mathematics}.
\newblock Cambridge University Press, Cambridge, 1992.

\bibitem{Visintin90}
{\sc Visintin, A.}
\newblock Nonconvex functionals related to multiphase systems.
\newblock {\em SIAM J. Math. Anal. 21}, 5 (1990), 1281--1304.

\bibitem{Visintin91}
{\sc Visintin, A.}
\newblock Generalized coarea formula and fractal sets.
\newblock {\em Japan J. Indust. Appl. Math. 8}, 2 (1991), 175--201.

\end{thebibliography}
\include{bibliography}
\end{document}